\documentclass[11pt,oneside,reqno]{amsart}

\hoffset=-0.7in \textwidth=6.4in \textheight=8.8in

\usepackage{amsmath,amssymb,amsthm,textcomp}
\usepackage{amsfonts,graphicx}
\usepackage[mathscr]{eucal}
\pagestyle{plain}
\usepackage{color}
\usepackage{csquotes}
\usepackage{enumitem}
\usepackage{mathtools}
\vfuzz=30pt

\interdisplaylinepenalty=0

\numberwithin{equation}{section}

\theoremstyle{definition}

\addtolength{\topmargin}{-0.7in}
\addtolength{\textheight}{0.4in}

\numberwithin{equation}{section}



\newtheorem{theorem}{\bf Theorem}[section]
\newtheorem{remark}{\bf Remark}[section]
\newtheorem{proposition}{Proposition}[section]

\newtheoremstyle
{remarkstyle}
{}
{11pt}
{}
{}
{\bfseries}
{:}
{     }
{\thmname{#1} \thmnumber{#2} }

\theoremstyle{remarkstyle}

\begin{document}
	\title{On the Generalized Birth-Death Process and its Linear Versions}
	\author[Pradeep Vishwakarma]{P. Vishwakarma}
	\address{Pradeep Vishwakarma, Department of Mathematics,
		Indian Institute of Technology Bhilai, Durg 491001, INDIA.}
	\email{pradeepv@iitbhilai.ac.in}
	\author{K.k. Kataria}
	\address{Kuldeep Kumar Kataria, Department of Mathematics,
	 Indian Institute of Technology Bhilai, Durg 491001, INDIA.}
	 \email{kuldeepk@iitbhilai.ac.in}
	 
	
	\subjclass[2010]{Primary : 60J27; Secondary: 60J20}
	
	\keywords{Extinction probability; birth-death process; generalized birth-death process; generalized linear birth-death process; generalized linear birth-death process with immigration.}
	\date{\today}
	
	\maketitle
	\begin{abstract}
		In this paper, we consider a generalized birth-death process (GBDP) and examined its linear versions. Using its transition probabilities, we obtain the system of differential equations that governs its state probabilities. The  distribution function of its waiting-time in state $s$ given that it starts in state $s$ is obtained. For a linear version of it, namely, the generalized linear birth-death process (GLBDP), we obtain the probability generating function, mean, variance and the probability of ultimate extinction of population. Also, we obtain the maximum likelihood estimate of one of its parameter. The differential equations that govern the joint cumulant generating functions of the population size with cumulative births and cumulative deaths are derived. In the case of constant birth and death rates in GBDP, the  explicit forms of the state probabilities, joint probability mass functions of population size with cumulative births and cumulative deaths, and their marginal probability mass functions are obtained. It is shown that the Laplace transform of a stochastic integral of GBDP satisfies its Kolmogorov backward equation with certain scaled parameters. Also, the first two moments of the stochastic path integral of GLBDP  are obtained. Later, we consider the  immigration effect in GLBDP for two different cases. An application of a linear version of GBDP and its stochastic path integral to vehicles parking management system is discussed.
	\end{abstract}
	
	\section{Introduction} The birth-death process (BDP) is a continuous-time and discrete state-space Markov process which is used to model the growth of population with time. If $\lambda_n>0$ for all $ n\ge0$ are the birth rates and $\mu_n>0$ for all $ n\ge1$ are the death rates then, in an infinitesimal time interval of length $h$, the probability of a birth equals  $\lambda_n h+o(h)$, probability of a death equals $\mu_n h+o(h)$ and probability of any other event is $o(h)$. Here, $n$ denotes the population size at a given time $t\ge0$. In BDP, the transition rates can depend on the state of the process. If the transition rates in BDP are linear, that is, $\lambda_n=n\lambda$ and $\mu_n=n\mu$ then it is called the linear birth-death process (LBDP).
	The BDP has various applications in the fields of engineering, queuing theory and biological sciences \textit{etc}. For example, it can be used to model the number of customers in queue at a service center or the growth of bacteria over time. However, it has certain limitation, for example, it is not a suitable process to model the situations involving multiple births or multiple deaths in an infinitesimal time interval.
	 	 
Doubleday (1973) introduced and studied a linear birth-death process with multiple births and single death.
Here, we consider a generalized version of the BDP where in an infinitesimal time interval there is a possibility of  multiple but finitely many births or deaths with the assumption that the chance of simultaneous birth and death is negligible. We call this process as the generalized birth-death process (GBDP) and denote it by $\{\mathcal{N}(t)\}_{t\ge0}$. It is shown that the state probabilities $q(n,t)=\mathrm{Pr}\{\mathcal{N}(t)=n\}$, $n\ge0$  of GBDP solve the following system of differential equations:
\begin{equation}\label{dff1}
	\frac{\mathrm{d}}{\mathrm{d}t}q(n,t)=\begin{cases}
		-\sum_{i=1}^{k_1}\lambda_{(0)_i}q(0,t)+\sum_{j=1}^{k_2}\mu_{(j)_j}q(j,t), \ n=0,\vspace{0.1cm}\\
		-\big(\sum_{i=1}^{k_1}\lambda_{(n)_i}+\sum_{j=1}^{k_2}\mu_{(n)_j}\big)q(n,t)\\\hspace{1.8cm}+\sum_{i=1}^{\mathrm{min}\{n,k_1\}}\lambda_{(n-i)_i}q(n-i,t)
		+\sum_{j=1}^{k_2}\mu_{(n+j)_j}q(n+j,t), \ n\ge1
		\end{cases}
\end{equation}
with initial conditions $q(1,0)=1$ and $q(n,0)=0$ for all $n\ne1$. Here, for each $n\ge0$, $\lambda_{(n)_i}$ is the rate of birth of size $i\in\{1,2,\dots,k_1\}$ and for each $n\ge1$, $\mu_{(n)_j}$ is the rate of death of size $j\in\{1,2,\dots,k_2\}$.

If birth and death rates are linear in GBDP, that is, $\lambda_{(n)_i}=n\lambda_i$ and $\mu_{(n)_j}=n\mu_j$ for all $n\ge1$, then we call this process as the generalized linear birth-death process (GLBDP). It is denoted by $\{N(t)\}_{t\ge0}$. The state probabilities $p(n,t)=\mathrm{Pr}\{N(t)=n\}$ of GLBDP are the solution of the following system of differential equations: 
\begin{equation*}
	\frac{\mathrm{d}}{\mathrm{d}t}p(n,t)=\begin{cases}
		\sum_{j=1}^{k_2}j\mu_jp(j,t),\ n=0,\vspace{0.1cm}\\
		-\big(\sum_{i=1}^{k_1}n\lambda_i+\sum_{j=1}^{k_2}n\mu_j\big)p(n,t)\\
		\hspace{1.9cm}+\sum_{i=1}^{k_1}(n-i)\lambda_ip(n-i,t)
		+\sum_{j=1}^{k_2}(n+j)\mu_jp(n+j,t), \ n\ge1,
	\end{cases}		
\end{equation*} 
where $p(n,t)=0$ for all $n<0$.

It is shown that the cumulative distribution function $W_s(t)=\mathrm{Pr}\{T_s\leq t\}$ is given by
\begin{equation*}
	W_s(t)=\exp\Bigg(-\sum_{i=1}^{k_1}\lambda_{(n)_i}+\sum_{j=1}^{k_2}\mu_{(n)_j}\Bigg)t,\ t\ge0,
\end{equation*}
where $T_s$ is the waiting time of GBDP in state $s$ given that it starts in state $s$. On using the distributions of these waiting times, we obtain the maximum likelihood estimate of the parameter $\sum_{i=1}^{k_1}\lambda_{i}+\sum_{j=1}^{k_2}\mu_{j}$ in GLBDP.

In Section~\ref{sec3}, for GLBDP, we obtain some results for the cumulative births $B(t)$, that is, the total number of births by time $t$ and for the cumulative deaths $D(t)$, that is, the total number of deaths by time $t$. We study a particular case of GBDP, where the birth and death rates are constant. We denote this process by $\{N^*(t)\}_{t\ge0}$. Let $B^*(t)$ and $D^*(t)$ be the total number of births and deaths in $\{N^*(t)\}_{t\ge0}$  up to time $t$, respectively. We obtain the explicit forms of probability mass functions (pmfs) of $N^*(t)$, $B^*(t)$ and $D^*(t)$ and their joint distributions.

In Section~\ref{sec5}, we consider a stochastic integral of GBDP and show that its Laplace transform satisfies the backward equation for GBDP with some scaled parameters. We study the joint distribution of GLBDP and its stochastic path integral. A similar study is done for GBDP with constant birth and death rates.

In Section~\ref{sec6}, we study the effect of immigration in GLBDP for two different cases. Let $\nu>0$ is the rate of immigration from outside environment.  First, we consider the immigration effect only if the population vanishes at any time $t\ge0$. In this case, the birth rates are $\lambda_{(0)_i}=\nu$, $\lambda_{(n)_i}=n\lambda_{i},\ n\ge1$ for all $i\in\{1,2,\dots,k_1\}$ and the death rates are $\mu_{(n)_j}=n\mu_j$, $n\ge1$ for all $j\in \{1,2,\dots,k_2\}$. Then, we consider the effect of immigration at every state of the GLBDP. In this case, the birth and death rates are $\lambda_{(n)_i}=\nu+n\lambda_i$ and $\mu_{(n)_j}=n\mu_j$, $n\ge0$, respectively. 

In the last section, we discuss an application of a linear case of GBDP to vehicle parking management system.

\section{Generalized birth-death process}
First, we consider a generalized birth-death process (GBDP) where in an infinitesimal time interval of length $h$ there is a possibility of either finitely many births $i\in\{1,2,\ldots,k_1\}$  with positive rates $\lambda_{(n)_i}$ or finitely many deaths $j\in\{1,2,\ldots,k_2\}$ with positive rates $\mu_{(n)_j}$. It is important to note that the birth and death rates may depend on the number of individuals $n$ present in the population at time $t\ge0$. It is assumed that the chances of simultaneous occurrence of births and deaths in such small intervals are negligible. 

We denote the GBDP by $\{\mathcal{N}(t)\}_{t\ge0}$. With the above assumptions, its transition probabilities are given by
		\begin{multline}\label{gbdp}
		\mathrm{Pr}\{\mathcal{N}(t+h)=n+j|\mathcal{N}(t)=n\}
		=\begin{cases}
				1-\sum_{i=1}^{k_1}\lambda_{(n)_i}h-\sum_{i=1}^{k_2}\mu_{(n)_i}h+o(h),\ j=0, \vspace{0.1cm}\\
				\lambda_{(n)_{j}}h+o(h), \ j=1,2,\ldots,k_1,\vspace{0.1cm}\\
				\mu_{(n)_{-j}}h+o(h), \ j=-1,-2,\ldots,-k_2,\vspace{0.1cm}\\
			o(h),\ \mathrm{otherwise},
		\end{cases}
	\end{multline}
where $o(h)/h\to 0$ as $h\to0$. 

Let $q(n,t)=\mathrm{Pr}\{\mathcal{N}(t)=n\}$, $n\ge0$ be the state probabilities of GBDP. Then, we have
\begin{align*}
  	q(n,t+h)&=\mathrm{Pr}\{\mathcal{N}(t+h)=n|\mathcal{N}(t)=n\}q(n,t)+\sum_{i=1}^{\mathrm{min}\{n,k_1\}}\mathrm{Pr}\{\mathcal{N}(t+h)=n|\mathcal{N}(t)=n-i\}q(n-i,t)\\
  	&\ \ +\sum_{j=1}^{k_2}\mathrm{Pr}\{\mathcal{N}(t+h)=n|\mathcal{N}(t)=n+j\}q(n+j,t).
  \end{align*}  
Using (\ref{gbdp}), we get
\begin{align*}
	q(n,t+h)&=\bigg(1-\sum_{i=1}^{k_1}\lambda_{(n)_i}h-\sum_{j=1}^{k_2}\mu_{(n)_j}h\bigg)q(n,t)+\sum_{i=1}^{\mathrm{min}\{n,k_1\}}\lambda_{(n-i)_i}hq(n-i,t)\\
	&\ \ +\sum_{j=1}^{k_2}\mu_{(n+j)_j}hq(n+j,t)+o(h).
\end{align*}
Equivalently,
\begin{align*}
		\frac{	q(n,t+h)-q(n,t)}{h}&=-\bigg(\sum_{i=1}^{k_1}\lambda_{(n)_i}+\sum_{j=1}^{k_2}\mu_{(n)_j}\bigg)q(n,t)+\sum_{i=1}^{\mathrm{min}\{n,k_1\}}\lambda_{(n-i)_i}q(n-i,t)\\
		&\ \ +\sum_{j=1}^{k_2}\mu_{(n+j)_j}q(n+j,t)+\frac{o(h)}{h}.
\end{align*}
On letting $h\to0$, we get
\begin{equation*}
       \frac{\mathrm{d}}{\mathrm{d}t}q(n,t)=-\bigg(\sum_{i=1}^{k_1}\lambda_{(n)_i}+\sum_{j=1}^{k_2}\mu_{(n)_j}\bigg)q(n,t)+\sum_{i=1}^{\mathrm{min}\{n,k_1\}}\lambda_{(n-i)_i}q(n-i,t)+\sum_{j=1}^{k_2}\mu_{(n+j)_j}q(n+j,t), \ n\ge 1.
\end{equation*}
Similarly, for $n=0$, we have
\begin{equation*}
	q(0,t+h)=\bigg(1-\sum_{i=1}^{k_1}\lambda_{(0)_i}h\bigg)q(0,t)+\sum_{j=1}^{k_2}\mu_{(j)_j}q(j,t)h+o(h).
\end{equation*}
Thus,
\begin{equation*}
	\frac{\mathrm{d}}{\mathrm{d}t}q(0,t)=-\sum_{i=1}^{k_1}\lambda_{(0)_i} q(0,t)+\sum_{j=1}^{k_2}\mu_{(j)_j}q(j,t).
\end{equation*}

Let us assume that there is one progenitor at time $t=0$. Thus, the state probabilities of GBDP solves the following system of differential equations:
\begin{equation}\label{tranprob}
	\frac{\mathrm{d}}{\mathrm{d}t}q(n,t)=-\bigg(\sum_{i=1}^{k_1}\lambda_{(n)_i}+\sum_{j=1}^{k_2}\mu_{(n)_j}\bigg)q(n,t)+\sum_{i=1}^{k_1}\lambda_{(n-i)_i}q(n-i,t)+\sum_{j=1}^{k_2}\mu_{(n+j)_j}q(n+j,t), \ n\ge 0
\end{equation}
with initial conditions $q(1,0)=1$ and $q(n,0)=0$ for all $n\ne1$.
Here, $q(n-i,t)=0$ for all $i>n$ and $\mu_{(0)_j}=0$ for all $j=1,2,\ldots,k_2$.
\begin{remark}
	For $k_1=k_2=1$, GBDP reduces to BDP. If $\mu_{(n)_j}=0$ for all $n$ and for all $j\in \{1,2,\ldots,k_2\}$, then the GBDP reduces to the generalized pure birth process.
\end{remark}
Let $T_s$ be the waiting time of GBDP in the state $s$, that is, $T_s$ is the total time before the process leave the state $s$ given that it start from $s$. 
\begin{proposition}
	Let 
	$
	W_s(t)=\mathrm{Pr}\{T_s\ge t\}.
	$ Then, $W_s(t)$ solves the following linear differential equation:
	\begin{equation}\label{waiting}
		\frac{\mathrm{d}}{\mathrm{d}t}W_s(t)=-\left(\sum_{i=1}^{k_1}\lambda_{(s)_i}+\sum_{j=1}^{k_2}\mu_{(s)_j}\right)W_s(t)
	\end{equation}
	with initial condition $W_s(0)=1.$
\end{proposition}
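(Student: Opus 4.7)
My plan is to derive the differential equation \eqref{waiting} from a multiplicative functional equation $W_s(t+h) = W_s(t)W_s(h)$, obtained by the Markov property, combined with the first-order expansion of $W_s(h)$ read off from the infinitesimal transition probabilities \eqref{gbdp}. The initial condition $W_s(0) = \Pr\{T_s \ge 0\} = 1$ is automatic.

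First I would decompose the event $\{T_s \ge t+h\}$ as \emph{the process remains in state $s$ throughout $[0,t]$ and then remains in state $s$ throughout $[t,t+h]$}. Since $\mathcal{N}(t)$ is time-homogeneous and Markov, conditioning on $\{T_s \ge t\}$ (which implies $\mathcal{N}(t) = s$), the conditional probability of staying in $s$ for an additional $h$ units equals $W_s(h)$. This yields the functional relation $W_s(t+h) = W_s(t)W_s(h)$.

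Next I would compute $W_s(h)$ to first order in $h$. Writing $\Lambda_s := \sum_{i=1}^{k_1}\lambda_{(s)_i} + \sum_{j=1}^{k_2}\mu_{(s)_j}$, the transition probability $\Pr\{\mathcal{N}(h)=s\mid\mathcal{N}(0)=s\} = 1 - \Lambda_s h + o(h)$ given by \eqref{gbdp} accounts for staying in $s$, but in principle also allows compound events that return to $s$ within $[0,h]$; since any such event requires at least two transitions and hence contributes $o(h)$, one obtains $W_s(h) = 1 - \Lambda_s h + o(h)$. Substituting this into the functional equation and rearranging gives
\begin{equation*}
\frac{W_s(t+h) - W_s(t)}{h} = -\Lambda_s W_s(t) + \frac{o(h)}{h}W_s(t),
\end{equation*}
and letting $h \to 0$ produces the differential equation \eqref{waiting}.

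The only step needing care is the justification that $W_s(h) = 1 - \Lambda_s h + o(h)$: the infinitesimal description \eqref{gbdp} is stated for the one-time-point transition probability, whereas $W_s(h)$ requires \emph{no transition anywhere in $[0,h]$}. These agree up to $o(h)$ because the probability of two or more jumps in an interval of length $h$ is $o(h)$, which follows from \eqref{gbdp} (the ``otherwise'' clause together with the fact that pairs of jump probabilities are of order $h^2$). I would state this observation explicitly; the rest is routine.
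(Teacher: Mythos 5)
Your proposal is correct and follows essentially the same route as the paper: an infinitesimal decomposition $W_s(t+h)=W_s(t)\bigl(1-\sum_{i=1}^{k_1}\lambda_{(s)_i}h-\sum_{j=1}^{k_2}\mu_{(s)_j}h\bigr)+o(h)$ via the Markov property and the transition probabilities \eqref{gbdp}, followed by taking $h\to 0$. Your explicit remark that the one-point probability $\Pr\{\mathcal{N}(h)=s\mid\mathcal{N}(0)=s\}$ and the sojourn probability $W_s(h)$ differ only by an $o(h)$ term (excursions requiring at least two jumps) is a point the paper passes over silently, and is a worthwhile addition.
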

\begin{proof} Consider an infinitesimal time interval of length $h$ such that $o(h)/h\to 0$ as $h\to0$. On using (\ref{gbdp}), we have
	\begin{align*}
		W_s(t+h)&=W_s(t)\mathrm{Pr}\{\mathcal{N}(t+h)=s|\mathcal{N}(t)=s\}=W_s(t)\bigg(1-\sum_{i=1}^{k_1}\lambda_{(s)_i}h-\sum_{j=1}^{k_2}\mu_{(s)_j}h\bigg)+o(h).
	\end{align*}
	So,
	\begin{equation*}
		\frac{W_s(t+h)-W_s(t)}{h}=-\bigg(\sum_{i=1}^{k_1}\lambda_{(s)_i}+\sum_{j=1}^{k_2}\mu_{(s)_j}\bigg)W_s(t)+\frac{o(h)}{h}.
	\end{equation*}
	On letting $h\to0$, we get the required result.
\end{proof}
\begin{remark}\label{exp}
	On solving (\ref{waiting}), we get
	\begin{equation*}
		W_s(t)=e^{-\left(\sum_{i=1}^{k_1}\lambda_{(s)_i}+\sum_{j=1}^{k_2}\mu_{(s)_j}\right)t},\ t\ge0.
	\end{equation*}
	Thus, the waiting time $T_s$ of GBDP is exponentially distributed with parameter $\sum_{i=1}^{k_1}\lambda_{(s)_i}+\sum_{j=1}^{k_2}\mu_{(s)_j}$.
	For $k_1=k_2=1$, it follows exponential distribution with parameter $\lambda_{s}+\mu_{s}$ (see Karlin and Taylor (1975), p. 133).
\end{remark}

\subsection{A linear case of GBDP}When the birth and death rates are linear, that is, $\lambda_{(n)_{i}}=n\lambda_i$ and $\mu_{(n)_j}=n\mu_j$ for all $i$, $j$ and $n$, we call the GBDP as the generalized linear birth-death process (GLBDP) and denote it by $\{N(t)\}_{t\ge0}$. From (\ref{tranprob}), it follows that the state probabilities $p(n,t)=\mathrm{Pr}\{N(t)=n\}$ of GLBDP solve the following system of differential equations:
\begin{equation}\label{glbdp1}
\frac{\mathrm{d}}{\mathrm{d}t}p(n,t)=
-\left(\sum_{i=1}^{k_1}n\lambda_i+\sum_{j=1}^{k_2}n\mu_j\right)p(n,t)+\sum_{i=1}^{k_1}(n-i)\lambda_ip(n-i,t)+\sum_{j=1}^{k_2}(n+j)\mu_jp(n+j,t), \ n\ge0,	
\end{equation} 
with initial conditions
\begin{equation}\label{initialcon}
	p(n,0)=\begin{cases}
		1, \ n=1,\vspace{0.1cm}\\
		0, \ n\ne 1,
	\end{cases}
\end{equation}
where $p(n-i,t)=0$ for all $i>n$.
\begin{remark}
	If $\mu_j=0$ for all $j\in \{1,2,\ldots,k_2\}$, then the GLBDP reduces to generalized linear pure birth process.
\end{remark}
\begin{remark}
	In the case of GLBDP, the waiting time in state $s$ has the exponential distribution with parameter $s\big(\sum_{i=1}^{k_1}\lambda_i+\sum_{j=1}^{k_2}\mu_j\big)$.
\end{remark}

Next, we obtain the system of differential equations that governs the probability generating function (pgf) of GLBDP.
\begin{proposition}\label{thm1}
Let $H(u,t)=\mathbb{E}(u^{N(t)}),\ |u|\leq1$ be the pgf of GLBDP. It solves the following partial differential equation:
   \begin{equation}\label{pgfequ}
    	\frac{\partial}{\partial t}H(u,t)=\left(\sum_{i=1}^{k_1}\lambda_iu(u^i-1)+\sum_{j=1}^{k_2}\mu_ju(u^{-j}-1)\right)\frac{\partial}{\partial u}H(u,t),\ t\ge0,
\end{equation}
with initial condition $H(u,0)=u$.
\end{proposition}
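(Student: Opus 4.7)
The plan is to use the standard generating-function technique: multiply (\ref{glbdp1}) by $u^n$ and sum over $n \ge 0$ to convert the infinite coupled ODE system into a single first-order linear PDE for $H(u,t)$. Since $H(u,t) = \sum_{n \ge 0} p(n,t) u^n$ converges absolutely for $|u| \le 1$, termwise differentiation in $t$ is justified, giving $\partial_t H(u,t) = \sum_{n \ge 0} u^n \partial_t p(n,t)$. Substituting the right-hand side of (\ref{glbdp1}) splits this expression into three generating-function series, one for each term of the ODE.

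For the diagonal term, the identity $\sum_{n \ge 0} n p(n,t) u^n = u \partial_u H$ immediately produces $-\bigl(\sum_i \lambda_i + \sum_j \mu_j\bigr) u \partial_u H$. For the birth-shift term, the index substitution $m = n-i$ (using $p(m,t) = 0$ for $m<0$) turns $\sum_i \lambda_i \sum_n (n-i) p(n-i,t) u^n$ into $\sum_i \lambda_i u^i \cdot u \partial_u H = \sum_i \lambda_i u^{i+1} \partial_u H$. For the death-shift term, the substitution $m = n+j$ rewrites $\sum_j \mu_j \sum_n (n+j) p(n+j,t) u^n$ as $\sum_j \mu_j u^{-j} \cdot u \partial_u H = \sum_j \mu_j u^{1-j} \partial_u H$.

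Summing the three contributions and factoring out $\partial_u H$ produces the coefficient $\sum_i \lambda_i (u^{i+1} - u) + \sum_j \mu_j (u^{1-j} - u) = \sum_i \lambda_i u(u^i - 1) + \sum_j \mu_j u(u^{-j} - 1)$, which is exactly the advertised expression in (\ref{pgfequ}). The initial condition $H(u,0) = u$ follows at once from (\ref{initialcon}) by direct substitution into $H(u,0) = \sum_{n \ge 0} p(n,0) u^n$. The main point that requires care is the bookkeeping in the death-shift term: after the substitution $m = n+j$ the resulting sum runs over $m \ge j$, which for $j \ge 2$ strictly contains the range $m \ge 1$ used in the identification with $u \partial_u H$, so one must verify that the formal replacement correctly captures the intended generating-function identity.
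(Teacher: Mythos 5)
Your strategy is the same as the paper's (multiply \eqref{glbdp1} by $u^n$, sum, and identify each series with a multiple of $\partial_u H$), and the diagonal and birth-shift computations are correct. However, the point you flag at the end and then leave as ``one must verify'' is not a formality --- it is the one step that genuinely needs an argument, and the verification fails as stated. Reindexing the death-shift term with $m=n+j$ over the legitimate range $n\ge 0$ gives
\[
\sum_{n\ge 0}(n+j)\,p(n+j,t)\,u^{n}
= u^{-j}\sum_{m\ge j}m\,p(m,t)\,u^{m}
= u^{1-j}\frac{\partial}{\partial u}H(u,t)-\sum_{m=1}^{j-1}m\,p(m,t)\,u^{m-j},
\]
and for $j\ge 2$ the boundary sum $\sum_{m=1}^{j-1}m\,p(m,t)\,u^{m-j}$ does not vanish. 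These leftover terms are exactly the negative powers of $u$ that $u^{1-j}\partial_u H$ produces; since $\partial_t H$ is a series in nonnegative powers of $u$ only, the identity \eqref{pgfequ} can hold only after those terms are discarded, i.e.\ after projecting both sides onto the nonnegative powers of $u$. So your argument, summed honestly over $n\ge 0$, yields \eqref{pgfequ} plus the correction $-\sum_{j=1}^{k_2}\mu_j\sum_{m=1}^{j-1}m\,p(m,t)\,u^{m-j}$, not \eqref{pgfequ} itself.

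For comparison, the paper's proof reaches the clean equation by summing its termwise identity over $n=-k_2,-k_2+1,\ldots$ rather than over $n\ge 0$. The adjoined equations for $-k_2\le n\le -1$ are not instances of \eqref{glbdp1}: their left-hand sides are $0$ (since $p(n,t)=0$ for $n<0$) while their right-hand sides reduce to $\sum_{j}(n+j)\mu_j\,p(n+j,t)u^{n}$, which is nonzero in general; adding these relations is precisely what cancels the boundary sum above. So the gap you identified is real, and it is the same gap the paper's proof papers over by its choice of summation range. To close your proof you must either restrict to $k_2=1$ (where the boundary sum is empty and everything is rigorous), or state explicitly that \eqref{pgfequ} is asserted modulo negative powers of $u$ (equivalently, with the paper's convention of extending the system to $n\ge -k_2$), and justify that convention.
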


\begin{proof}
By using (\ref{initialcon}), we have
$
	H(u,0)=\sum_{n=0}^{\infty}u^np(n,0)=u.
$
   On multiplying $u^n$ on both sides of (\ref{glbdp1}) and by adjusting the terms, we get   
\begin{align}
	\frac{\partial}{\partial t}u^np(n,t)&=-\bigg(\sum_{i=1}^{k_1}\lambda_i+\sum_{j=1}^{k_2}\mu_j\bigg)u\frac{\partial}{\partial u}u^np(n,t)+\sum_{i=1}^{k_1}\lambda_i u^{i+1}\frac{\partial}{\partial u}u^{n-i} p(n-i,t)\nonumber\\
		&\ \ +\sum_{j=1}^{k_2}\mu_ju^{-j+1}\frac{\partial}{\partial u}u^{n+j}p(n+j,t).\label{*}
\end{align}
Summing over $n=-k_2,-k_2+1,\ldots$ on both sides of (\ref{*}) gives the required result. Here, the change of  sum and derivative is justified because
\begin{align*}
	\left|\sum_{n=0}^{\infty}\frac{\partial}{\partial u}u^np(n,t)\right|&=\left|\sum_{n=0}^{\infty}nu^{n-1}p(n,t)\right|\\
	&\leq\sum_{n=0}^{\infty}np(n,t)=\mathbb{E}(N(t))<\infty.
\end{align*}
This completes the proof.
\end{proof}
\begin{remark}
	For $k_1=k_2=1$, the GLBDP reduces to LBDP. Its pgf  solves (see Bailey (1964)) 
	\begin{equation*}	
		\frac{\partial}{\partial t}{H}(u,t)=(\lambda_1 u-\mu_1)(u-1)\frac{\partial}{\partial u}{H}(u,t)
	\end{equation*}
	with ${H}(u,0)=u$.
\end{remark}

\begin{remark}
		The Laplace transform $\tilde{H}(u,z)=\int_{0}^{\infty}e^{-zt}H(u,t)\,\mathrm{d}t$ of the pgf of GLBDP
		is the solution of following differential equation:
		\begin{equation*}
			-u+z	\tilde{H}(u,z)=\bigg(\sum_{i=1}^{k_1}\lambda_iu(u^i-1)+\sum_{j=1}^{k_2}\mu_ju(u^{-j}-1)\bigg)\frac{\partial}{\partial u}	\tilde{H}(u,z).
		\end{equation*}	 
\end{remark}
Next, we discuss the solution of (\ref{pgfequ}).
First, we rewrite (\ref{pgfequ}) as follows:
\begin{equation}\label{pgf1}
	\frac{\partial}{\partial t}H(u,t)=\phi(u)\frac{\partial}{\partial u}H(u,t),
\end{equation}
where $\phi(u)=\sum_{i=1}^{k_1}\lambda_iu(u^i-1)+\sum_{j=1}^{k_2}\mu_ju(u^{-j}-1).$
We apply the method of characteristics to solve (\ref{pgf1}), and thus we get the following subsidiary equations: 

\begin{equation}\label{sub1}
	\mathrm{d}t=-\frac{u^{k_2-1}}{\psi(u)}\mathrm{d}u
\end{equation}
and 
\begin{equation}\label{sub2}
	\frac{\partial}{\partial t}H(u,t)=0,
\end{equation}
where $\psi(u)=u^{k_2-1}\phi(u)$.
Moreover, we assume that the roots $r_i,\ i=1,2,\ldots,k_1+k_2$ of $\psi(u)$ are distinct which can be ensured by a small perturbation in $\lambda_i$ and $\mu_j$.
 So, (\ref{sub1}) can be rewritten as
\begin{equation}\label{2.9}
	\mathrm{d}t=-\left(\sum_{i=1}^{k_1+k_2}\frac{c_i}{(u-r_i)}\right)\mathrm{d}u,
\end{equation}
where $c_i=\lambda_{k_1}^{-1}{r_i^{k_2-1}}/{\prod_{j\ne i}(r_i-r_j)}.$

On solving (\ref{sub2}) and (\ref{2.9}), we get $	H(u,t)=\beta_1$ and	$e^{-t}g(u)= \beta_2$,
respectively. Here, $\beta_1$ and $\beta_2$ are  real constants and 
\begin{equation}\label{g(u)}
	g(u)=\prod_{i=1}^{k_1+k_2}(u-r_i)^{-c_i}.
\end{equation}
Thus, the arbitrary solution of  (\ref{pgfequ}) is obtained in the following form:
\begin{equation}\label{PGF}
	H(u,t)=f\left(e^{-t}g(u)\right),
\end{equation}
where $f$ is some real-valued function which can be determined by using the given initial condition, that is, $	H(u,0)=u=f\left(g(u)\right)$. Note that there always exist a $\delta>0$ such that $|r_i|>\delta$ and $\psi(u)$ has same sign for all $u\in [0,\delta]$. So, $g(u)$ is a monotone function in $[0,\delta]$ and its inverse exist.
Hence, (\ref{PGF}) holds with $f(u)=g^{-1}(u)$ for all $u\in [0,\delta]$ and by using the analytic continuation it can be extended to all $u$ such that $|u|\leq1$.
\begin{remark}
		Let $\left(a_k:k\in \{0,1,2,\ldots\}\right)$ be the initial distribution of GLBDP. Then,  its pgf has the following form: 
	\begin{equation*}
		H(u,t)=\sum_{k=0}^{\infty}a_k\left(f\left(e^{-t}\prod_{i=1}^{k_1+k_2}(u-r_i)^{-c_i}\right)\right)^{k}.
	\end{equation*}	
	In particular, if $N(0)=n_0>1$ then
	\begin{equation*}
		H(u,t)=\left(f\left(e^{-t}\prod_{i=1}^{k_1+k_2}(u-r_i)^{-c_i}\right)\right)^{n_0}.
	\end{equation*}
	Its proof follows along the similar lines to that of (\ref{PGF}).
\end{remark}
Next, we obtain the probability of ultimate extinction of GLBDP.

Let $p(0)=\lim_{t\to\infty}p(0,t)$ be the probability of ultimate extinction in GLBDP. We use the inverse function representation of $H(u,t)$ to approximate the extinction probability $ p(0,t)$.
Let ${g}'(u)=\frac{\mathrm{d}g(u)}{\mathrm{d}u}$, where $g$ is given in (\ref{g(u)}).
If $\epsilon$ is the least positive root of $\psi(u)$ then $g(\cdot)$ is a monotone function in $[0,\epsilon]$ and $g(\epsilon)=0$. So,
$
	f(u)=g^{-1}(u)$ for all $u\in[0,\epsilon] $ {and}$\ f(0)=\epsilon.
$
As
\begin{align*}
	f(u)&=\epsilon+\int_{0}^{u}f'(x)\,\mathrm{d}x\\
	&=\epsilon+\int_{0}^{u}\frac{\mathrm{d}x}{{g}'(f(x))}\ \ \ \text{for}\ u\in [0,\delta],
\end{align*}
where $\delta$ is some positive real number. So, we have
\begin{align*}
	H(u,t)=\epsilon+\int_{0}^{e^{-t}g(u)}\frac{\mathrm{d}x}{{g}'(f(x))}.
\end{align*}
On taking $u=0$, we get
\begin{equation*}
	p(0,t)=H(0,t)=\epsilon+\int_{0}^{e^{-t}g(0)}\frac{\mathrm{d}x}{{g}'(f(x))}.
\end{equation*}
Thus, if the process run for infinitely long time then the probability of ultimate extinction converges exponentially to $\epsilon$, that is,
\begin{equation}\label{extprob}
	p(0)=\lim_{t\to\infty}p(0,t)=\epsilon.
\end{equation}
\begin{remark}
	In the case of LBDP, the above results reduce to the results given in Bailey (1964).
	
	 For $k_1=k_2=1$, $ \lambda_1=\lambda$ and $\mu_1=\mu$ such that $\lambda\ne\mu$, we get the two distinct roots of $\psi(u)$, that is, $r_1=\mu/\lambda$ and $r_2=1$. So, $c_1=1/(\mu-\lambda)$ and $c_2=1/(\lambda-\mu)$.
From (\ref{g(u)}), we have
\begin{equation*}
	g(u)=\left(\frac{u-\mu/\lambda}{u-1}\right)^{1/(\lambda-\mu)}.
\end{equation*}
Thus, 
\begin{equation*}
	f(u)=g^{-1}(u)=\frac{\mu-\lambda u^{\lambda-\mu}}{\lambda(1-u^{\lambda-\mu})}.
\end{equation*}
So, from (\ref{PGF}), the pgf of LBDP is 
\begin{equation*}
	H(u,t)=\frac{\mu(u-1)-e^{-t(\lambda-\mu)}(\lambda u-\mu)}{\lambda(u-1)-e^{-t(\lambda-\mu)}(\lambda u-\mu)}.
\end{equation*}
Its extinction probability is given by (see Bailey (1964), p. 95)
\begin{equation*}
	p(0,t)=H(0,t)=\frac{\mu-\mu e^{-t(\lambda-\mu)}}{\lambda-\mu e^{-t(\lambda-\mu)}}
\end{equation*}
and its state probabilities are (see Bailey (1964), p. 95)
\begin{equation*}
	p(n,t)=(\lambda-\mu)^2e^{-t(\lambda-\mu)}\frac{(\lambda-\lambda e^{-t(\lambda-\mu)})^{n-1}}{(\lambda-\mu e^{-t(\lambda-\mu)})^{n+1}},\ n\ge1.
\end{equation*}
From (\ref{extprob}), the probability of ultimate extinction $p(0)$ of LBDP is given by (see Bailey (1964), p. 96)
	\begin{equation*}
		p(0)=\begin{cases}
			\frac{\mu}{\lambda},\ \lambda>\mu,\vspace{0.1cm}\\
			1,\ \lambda<\mu.
		\end{cases}
	\end{equation*}
\end{remark}
\begin{remark}
	The moment generating function (mgf) $M(u,t)\coloneqq H(e^u,t)$ of GLBDP solves 
	\begin{equation*}
		\frac{\partial}{\partial t}M(u,t)=\left(\sum_{i=1}^{k_1}\lambda_i(e^{iu}-1)+\sum_{j=1}^{k_2}\mu_j(e^{-ju}-1)\right)\frac{\partial}{\partial u}M(u,t),
	\end{equation*}
with $M(u,0)=e^{u}$.
\end{remark}

Next, we obtain the mean and variance of GLBDP.	
\begin{theorem}
 Let $\eta=\sum_{i=1}^{k_1}i\lambda_i-\sum_{j=1}^{k_2}j\mu_j$.	For $t\ge0$, the mean $m(t)=\mathbb{E}(N(t))$ and the variance $v(t)=\mathrm{Var}(N(t))$ of GLBDP are given by
	\begin{equation}\label{meanvar}
		m(t)=e^{\eta t}\ \ \text{and}\ \ 	v(t)=\frac{\sum_{i=1}^{k_1}i^2\lambda_i+\sum_{j=1}^{k_2}j^2\mu_j}{\eta}\left(e^{2\eta t}-e^{\eta t}\right),
	\end{equation}
	respectively.
\end{theorem}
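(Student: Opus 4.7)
The plan is to derive ordinary differential equations for the first and second factorial moments by differentiating the pgf equation \eqref{pgfequ} with respect to $u$ and then evaluating at $u=1$. Write $\phi(u)=\sum_{i=1}^{k_1}\lambda_i u(u^i-1)+\sum_{j=1}^{k_2}\mu_j u(u^{-j}-1)$, so that \eqref{pgfequ} reads $\partial_t H=\phi(u)\,\partial_u H$. Since $H(1,t)=1$ and $\phi(1)=0$, the boundary terms at $u=1$ drop out cleanly, which is the structural feature that makes this approach work.

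First, for the mean $m(t)=\partial_u H(1,t)$, I would differentiate the pgf equation once in $u$ and set $u=1$, obtaining $m'(t)=\phi'(1)\,m(t)$. A direct computation gives $\phi'(1)=\sum_{i=1}^{k_1}i\lambda_i-\sum_{j=1}^{k_2}j\mu_j=\eta$. Combined with the initial condition $m(0)=1$ read off from $H(u,0)=u$, this yields $m(t)=e^{\eta t}$.

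Next, for the second factorial moment $m_2(t)=\partial_{u}^2 H(1,t)$, I would differentiate \eqref{pgfequ} twice in $u$, getting $\partial_t\partial_u^2 H=\phi''(u)\partial_u H+2\phi'(u)\partial_u^2 H+\phi(u)\partial_u^3 H$, and evaluate at $u=1$ (where the last term vanishes). This produces the linear ODE
\begin{equation*}
m_2'(t)=2\eta\, m_2(t)+\phi''(1)\,e^{\eta t},\qquad m_2(0)=0,
\end{equation*}
whose integrating-factor solution is $m_2(t)=\phi''(1)\eta^{-1}(e^{2\eta t}-e^{\eta t})$. A short computation shows $\phi''(1)=\sum_{i=1}^{k_1}i(i+1)\lambda_i+\sum_{j=1}^{k_2}j(j-1)\mu_j$, so that $\phi''(1)-\eta=\sum_{i=1}^{k_1}i^2\lambda_i+\sum_{j=1}^{k_2}j^2\mu_j$. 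Finally, $v(t)=m_2(t)+m(t)-m(t)^2$ simplifies, using $m(t)^2=e^{2\eta t}$ and $m(t)=e^{\eta t}$, to the announced expression.

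The only real obstacle is the formal justification of differentiating the pgf twice under the sum and evaluating at $u=1$. This is already handled implicitly in the excerpt: the estimate given in the proof of Proposition~\ref{thm1} bounds $|\partial_u H|$ by $\mathbb{E}(N(t))<\infty$, and the analogous bound for $\partial_u^2 H$ follows once the second moment is known to be finite (which can be argued a priori by comparison with the pure-birth dominator where $\mu_j\equiv 0$, giving finite exponential moments). With this justification in place the calculation above is routine, and the rest is algebraic bookkeeping of $\phi'(1)$ and $\phi''(1)$. The degenerate case $\eta=0$ is not explicitly covered by the stated formula and, if needed, would be handled separately by taking the limit $\eta\to 0$, giving $v(t)=t\bigl(\sum_i i^2\lambda_i+\sum_j j^2\mu_j\bigr)$.
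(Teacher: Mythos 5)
Your proposal is correct and follows essentially the same route as the paper: both derive the linear ODEs $m'=\eta m$ and (for a second moment) $\,\cdot\,'=2\eta(\,\cdot\,)+(\text{const})\,m$ and solve them with the obvious initial conditions; the paper gets these by multiplying the forward equations \eqref{glbdp1} by $n$ and $n^2$ and summing (working with the raw second moment $m_2(t)=\sum_n n^2p(n,t)$, $m_2(0)=1$), whereas you differentiate the pgf equation \eqref{pgfequ} at $u=1$ (working with the factorial moment, $\phi''(1)=\zeta+\eta$), which is the same computation in generating-function clothing. Your bookkeeping of $\phi'(1)=\eta$ and $\phi''(1)-\eta=\sum_i i^2\lambda_i+\sum_j j^2\mu_j$ checks out, and your remark that the stated variance formula presupposes $\eta\ne0$ applies equally to the paper's own derivation.
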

\begin{proof}
		From (\ref{initialcon}), we get
 $m(0)=\sum_{n=1}^{\infty}np(n,0)=1$.
	On multiplying $n$ on both sides of (\ref{glbdp1}) and substituting $n(n-i)=(n-i)^2+i(n-i)$ and $n(n+j)=(n+j)^2-j(n+j)$ yields	
	\begin{multline*}
		\frac{\mathrm{d}}{\mathrm{d}t}m(t)=	-\left(\sum_{i=1}^{k_1}\lambda_i+\sum_{j=1}^{k_2}\mu_j\right) n^2p(n,t)+\sum_{i=1}^{k_1}\lambda_i(n-i)^2p(n-i,t)+\sum_{j=1}^{k_2}\mu_j(n+j)^2p(n+j,t)\\
		+\sum_{i=1}^{k_1}\lambda_ii(n-i)p(n-i,t)-\sum_{j=1}^{k_2}\mu_jj(n+j)p(n+j,t).
	\end{multline*}
	Summing over $n=-k_2,-k_2+1,\ldots$ gives
	\begin{align}\label{2.15}
		\frac{\mathrm{d}}{\mathrm{d}t}m(t)=\left(\sum_{i=1}^{k_1}i\lambda_i-\sum_{j=1}^{k_2}j\mu_j\right)m(t).
	\end{align}
	On solving (\ref{2.15}) with initial condition $m(0)=1$, we get $m(t)=\exp{\big(\sum_{i=1}^{k_1}i\lambda_i-\sum_{j=1}^{k_2}j\mu_j\big) t}$.
	
	Let $m_2(t)=\sum_{n=0}^{\infty}n^2p(n,t)$. In order to obtain the variance, we multiply $n^2$ on both sides of (\ref{glbdp1}) and substitute $n^2(n-i)=(n-i)^3+2i(n-i)^2+i^2(n-i)$ and $n^2(n+j)=(n+j)^3-2j(n+j)^2+j^2(n+j)$, to get
	\begin{align*}
		\frac{\mathrm{d}}{\mathrm{d}t}m_2(t)&=-\left(\sum_{i=1}^{k_1}\lambda_i+\sum_{j=1}^{k_2}\mu_j\right) n^3p(n,t)+\sum_{i=1}^{k_1}\lambda_i(n-i)^3p(n-i,t)+2\sum_{i=1}^{k_1}\lambda_ii(n-i)^2p(n-i,t)\\
		&\quad
		+\sum_{i=1}^{k_1}\lambda_ii^2(n-i)p(n-i,t)+\sum_{j=1}^{k_2}\mu_j(n+j)^3p(n+j,t)-2\sum_{j=1}^{k_2}\mu_jj(n+j)^2p(n+j,t)\\
		&\quad+\sum_{j=1}^{k_2}\mu_jj^2(n+j)p(n+j,t)\\
		&=2\sum_{i=1}^{k_1}\lambda_ii(n-i)^2p(n-i,t)
		+\sum_{i=1}^{k_1}\lambda_ii^2(n-i)p(n-i,t)-2\sum_{j=1}^{k_2}\mu_jj(n+j)^2p(n+j,t)\\
		&\quad+\sum_{j=1}^{k_2}\mu_jj^2(n+j)p(n+j,t).
	\end{align*}
	 By summing over $n=-k_2,-k_2+1,\ldots$, we get
	\begin{equation}\label{Var}
		\frac{\mathrm{d}}{\mathrm{d}t}m_2(t)=	2\left(\sum_{i=1}^{k_1}i\lambda_i-\sum_{j=1}^{k_2}j\mu_j\right)m_2(t)+\left(\sum_{i=1}^{k_1}i^2\lambda_i+\sum_{j=1}^{k_2}j^2\mu_j\right)m(t).
	\end{equation}
	On solving (\ref{Var}) with initial condition $m_2(0)=1$, we get
	\begin{equation*}
		m_2(t)=\left(1+\frac{\sum_{i=1}^{k_1}i^2\lambda_i+\sum_{j=1}^{k_2}j^2\mu_j}{\sum_{i=1}^{k_1}i\lambda_i-\sum_{j=1}^{k_2}j\mu_j}\right)m(t)^2-\frac{\sum_{i=1}^{k_1}i^2\lambda_i+\sum_{j=1}^{k_2}j^2\mu_j}{\sum_{i=1}^{k_1}i\lambda_i-\sum_{j=1}^{k_2}j\mu_j}m(t).
	\end{equation*}
	Hence,
	\begin{align*}
		v(t)&=m_2(t)-m(t)^2=\left(\frac{\sum_{i=1}^{k_1}i^2\lambda_i+\sum_{j=1}^{k_2}j^2\mu_j}{\sum_{i=1}^{k_1}i\lambda_i-\sum_{j=1}^{k_2}j\mu_j}\right)(m(t)^2-m(t)).
	\end{align*}
 This completes the proof.
\end{proof}
{\subsubsection{Parameter estimation in GLBDP}\label{sec4}
	Moran (1951) and Moran (1953) used the inter-event time to estimate the parameters in evolutive processes. Doubleday (1973) used similar technique to obtain the maximum likelihood estimation (MLE) of the parameters in multiple birth-death process.	
	Here, we use the same method to obtain the MLE of the parameter  $		\Lambda=\sum_{i=1}^{k_1}\lambda_i+\sum_{i=1}^{k_2}\mu_i$ in GLBDP.
	
	Let $\mathcal{E}$ be the total number of transitions made by $N(t)$ upto some fixed time $\mathcal{T}$. Suppose $t_1,t_2,\ldots,t_\mathcal{E}$ are epochs  when these transitions have occurred. We define 
	$
	\tau_k=t_k-t_{k-1}\ \text{for all}\ 1\leq k\leq \mathcal{E}\ \text{with}\ t_0=0,
	$
	where $\tau_1,\tau_2,\ldots,\tau_\mathcal{E}$ denote the inter-event times.
	 Note that $\tau_k$'s are independent and exponentially distributed with parameter $\Lambda N(t_{k-1})$, $k=1,2,\ldots,\mathcal{E}$.
	The likelihood function is given by 
	\begin{equation}\label{jointd}
		L(\Lambda;\tau_1,\tau_2,\ldots,\tau_\mathcal{E})=\Lambda^n\prod_{k=1}^{\mathcal{E}}N(t_{k-1})e^{-\Lambda\sum_{k=1}^{\mathcal{E}}N(t_{k-1})\tau_k}.
	\end{equation}
	On taking the derivative of log-likelihood function $\ln	L(\Lambda;\tau_1,\tau_2,\ldots,\tau_\mathcal{E})$ with respect to $\Lambda$, we get
	\begin{equation}\label{mle}
		\frac{\mathrm{d}}{\mathrm{d}\Lambda}\ln	L(\Lambda;\tau_1,\tau_2,\ldots,\tau_\mathcal{E})=\frac{\mathcal{E}}{\Lambda}-\sum_{k=1}^{\mathcal{E}}N(t_{k-1})\tau_k.
	\end{equation}
	On equating (\ref{mle}) to zero, we get the MLE of $\Lambda$ as follows:
	\begin{equation*}
		\hat{\Lambda}_{mle}=\frac{\mathcal{E}}{\sum_{k=1}^{\mathcal{E}}N(t_{k-1})\tau_k}.
	\end{equation*} 
	Note that $2\Lambda N(t_{k-1})\tau_k $ follows $\chi^2$ distribution  with two degrees of freedom. So,
	\begin{equation*}
		2\Lambda\sum_{k=1}^{\mathcal{E}}N(t_{k-1})\tau_k=	2\Lambda\int_{0}^{\mathcal{T}}N(s)\,\mathrm{d}s
	\end{equation*}
	has $\chi^2$ distribution with $2\mathcal{E}$ degrees of freedom. Thus,
	$
	\mathrm{E}\big(2\Lambda\int_{0}^{\mathcal{T}}N(s)\,\mathrm{d}s\big)=2\mathcal{E}.
	$
	Hence,
	$
	\tilde{\Lambda}={\mathcal{E}}^{-1}\int_{0}^{\mathcal{T}}N(s)\,\mathrm{d}s
	$
	is an unbiased estimator of $1/\Lambda$ whose distribution is known. From (\ref{jointd}), it follows by using the Fisher factorization theorem that
	$
	\tilde{\Lambda}={\mathcal{E}}^{-1}\sum_{k=1}^{\mathcal{E}}N(t_{k-1})\tau_k
	$
	is a sufficient statistics for $\Lambda$. Indeed, it is a minimal sufficient statistics for $\Lambda$.
\subsection{GBDP with constant birth and death rates}\label{subsec1}
Here, we obtain the explicit form of the state probabilities of GBDP in the case of constant birth and death rates, that is, $\lambda_{(n)_i}=\lambda_i>0$  for all $n\ge0$ and $\mu_{(n)_j}=\mu_j>0$ for all $n\ge1$. In this case, we denote the process by $\{N^*(t)\}_{t\ge0}$. 

 From (\ref{tranprob}), the state probabilities  $q^*(n,t)=\mathrm{Pr}\{N^*(t)=n\}$ solve the following system of differential equations:
\begin{equation}\label{ctranprob}
	\frac{\mathrm{d}}{\mathrm{d}t}q^*(n,t)=-\bigg(\sum_{i=1}^{k_1}\lambda_{i}+\sum_{j=1}^{k_2}\mu_{j}\bigg)q^*(n,t)+\sum_{i=1}^{k_1}\lambda_{i}q^*(n-i,t)+\sum_{j=1}^{k_2}\mu_{j}q^*(n+j,t), \ n\ge 0
\end{equation}
with initial conditions $q^*(1,0)=1$ and $q^*(n,0)=0$ for all $n\ne1$.
\begin{remark}
	If there is no progenitor at $t=0$ and $\lambda_{(n)_i}=\lambda_i>0$, $\mu_{(n)_j}=0$ for all $i$, $j$ and $n\ge0$ then the GBDP reduces to the generalized counting process (GCP) (see Crescenzo et. al. (2016)).
\end{remark}
\begin{remark}
	The pgf $H^*(u,t)=\mathbb{E}\left(u^{N^*(t)}\right),\ |u|\leq1$ is the solution of the following differential equation:
	\begin{equation*}
		\frac{\partial}{\partial t}H^*(u,t)=\left(\sum_{i=1}^{k_1}\lambda_i\left(u^i-1\right)+\sum_{j=1}^{k_2}\mu_j\left(u^{-j}-1\right)\right)H^*(u,t)
	\end{equation*}
	with $H^*(u,0)=u$. It is given by
	\begin{equation}\label{conspgfsol}
		H^*(u,t)=u \exp\left(\sum_{i=1}^{k_1}\lambda_i\left(u^i-1\right)+\sum_{j=1}^{k_2}\mu_j\left(u^{-j}-1\right)\right)t.
	\end{equation}
\end{remark}

\begin{theorem}
 The state probabilities $q^*(n,t)$ of the process $\{N^*(t)\}_{t\ge0}$ are given by
\begin{equation*}
	q^*(n,t)=\sum_{\Omega^*(k_1,k_2,n)}\prod_{i=1}^{k_1}\prod_{j=1}^{k_2}\frac{\lambda_i^{x_i}\mu_j^{y_j}t^{x_i+y_j}}{x_i!y_j!}e^{-\Lambda t},\ n\ge0,
\end{equation*}
where 
	$
	\Omega^*(k_1,k_2,n)=\bigg\{(x_1,x_2,\ldots,x_{k_1},y_1,y_2,\ldots,y_{k_2})\in \mathbb{N}_0^{k_1+k_2}:\sum_{i=1}^{k_1}ix_i-\sum_{j=1}^{k_2}jy_j=n-1\bigg\}
$ and $\Lambda=\left(\sum_{i=1}^{k_1}\lambda_i+\sum_{j=1}^{k_2}\mu_j\right)$.

\end{theorem}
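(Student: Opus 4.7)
The plan is to extract $q^*(n,t)$ directly from the closed-form pgf in \eqref{conspgfsol} by expanding the exponential as a product of power series in $u$ and identifying the coefficient of $u^n$. Since the state probabilities are uniquely determined by the pgf, it suffices to carry out this series manipulation carefully.

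First I would split the exponent in \eqref{conspgfsol} into its constituent pieces, writing
\begin{equation*}
H^*(u,t) = u\, e^{-\Lambda t} \prod_{i=1}^{k_1} e^{\lambda_i t\, u^i} \prod_{j=1}^{k_2} e^{\mu_j t\, u^{-j}},
\end{equation*}
and then expand each factor by the usual Taylor series:
\begin{equation*}
e^{\lambda_i t\, u^i} = \sum_{x_i=0}^{\infty} \frac{(\lambda_i t)^{x_i}}{x_i!}\, u^{i x_i}, \qquad e^{\mu_j t\, u^{-j}} = \sum_{y_j=0}^{\infty} \frac{(\mu_j t)^{y_j}}{y_j!}\, u^{-j y_j}.
\end{equation*}
Multiplying these out and absorbing the leading factor $u$ produces a (formal Laurent) series in $u$ indexed by $(x_1,\dots,x_{k_1},y_1,\dots,y_{k_2}) \in \mathbb{N}_0^{k_1+k_2}$, in which a generic term carries the monomial $u^{1 + \sum_i i x_i - \sum_j j y_j}$ with coefficient $e^{-\Lambda t}\prod_i (\lambda_i t)^{x_i}/x_i!\prod_j (\mu_j t)^{y_j}/y_j!$.

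Collecting terms whose exponent of $u$ equals $n$ gives the constraint $\sum_i i x_i - \sum_j j y_j = n-1$, which is exactly the defining condition of $\Omega^*(k_1,k_2,n)$, and then reading off the coefficient yields the stated formula for $q^*(n,t)$.

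The only subtlety is that because of the $u^{-j}$ factors the index set $\Omega^*(k_1,k_2,n)$ is infinite, so the rearrangement of a doubly-infinite family of terms must be justified. This is not a real obstacle: all terms are nonnegative and their unconstrained sum equals $\prod_i e^{\lambda_i t}\prod_j e^{\mu_j t} = e^{\Lambda t}$, so the constrained series converges absolutely (uniformly on $|u|=1$, for example). With absolute convergence in hand, Fubini on the counting measure allows the rearrangement, and the identification of the coefficient of $u^n$ is legitimate. This is the main point to address carefully; everything else reduces to bookkeeping of the indices $(x_i,y_j)$.
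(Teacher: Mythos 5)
Your proposal is correct and follows essentially the same route as the paper: the paper expands the exponential in \eqref{conspgfsol} via the power series $\sum_l(\cdot)^l t^l/l!$ followed by the multinomial theorem, which produces exactly the same family of terms indexed by $(x_1,\dots,x_{k_1},y_1,\dots,y_{k_2})$ that you obtain by factoring into $\prod_i e^{\lambda_i t u^i}\prod_j e^{\mu_j t u^{-j}}$ and multiplying Taylor series, and then both arguments finish by extracting the coefficient of $u^n$. Your explicit justification of the rearrangement via absolute convergence of the nonnegative terms is a point the paper passes over silently, but it does not change the substance of the argument.
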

\begin{proof}	
	From (\ref{conspgfsol}), we have
	\begin{align}
		H^*(u,t)&=ue^{-\Lambda t}\sum_{l=0}^{\infty}\bigg(\sum_{i=1}^{k_1}\lambda_iu^i+\sum_{j=1}^{k_2}\mu_ju^{-j}\bigg)^l\frac{t^l}{l!}\nonumber\\
		&=e^{-\Lambda t}\sum_{l=0}^{\infty}\sum_{S(k_1,k_2,l)}\prod_{i=1}^{k_1}\prod_{j=1}^{k_2}\frac{\lambda_i^{x_i}\mu_j^{y_j}t^{x_i+y_j}}{x_i!y_j!}u^{ix_i-jy_j+1},\label{pgfexp}
	\end{align}	
	where
	\begin{equation*}
	S(k_1,k_2,l)=\bigg\{(x_1,x_2,\ldots,x_{k_1},y_1,y_2,\ldots,y_{k_2}): x_i, y_j\in\{0,1,\ldots,l\}\  \text{and}\ \sum_{i=1}^{k_1}x_i+\sum_{j=1}^{k_2}y_j=l\bigg\}.
	\end{equation*}
 On rearranging the terms and extracting the coefficients of $u^n$ in (\ref{pgfexp}), we get the required result.
\end{proof}

\begin{theorem}
	For $t\ge0$, the mean  and the variance of $N^*(t)$ are given by 
	\begin{equation*}
		\mathbb{E}(N^*(t))=1+\eta t\ \ \text{and}\ \ \mathrm{Var}(N^*(t))=\Bigg(\sum_{i=1}^{k_1}i^2\lambda_i+\sum_{j=1}^{k_2}j^2\mu_j\Bigg)t,
	\end{equation*}
	respectively.
\end{theorem}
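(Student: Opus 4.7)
The plan is to read both moments directly off the explicit pgf in (\ref{conspgfsol}), using the standard identities $\mathbb{E}(N^*(t)) = \frac{\partial}{\partial u} H^*(u,t)\big|_{u=1}$ and $\mathbb{E}(N^*(t)(N^*(t)-1)) = \frac{\partial^2}{\partial u^2} H^*(u,t)\big|_{u=1}$, and recovering the variance from $\mathrm{Var}(N^*(t)) = \mathbb{E}(N^*(t)(N^*(t)-1)) + \mathbb{E}(N^*(t)) - \mathbb{E}(N^*(t))^2$. Write $\Phi(u) = \sum_{i=1}^{k_1}\lambda_i(u^i-1) + \sum_{j=1}^{k_2}\mu_j(u^{-j}-1)$ so that $H^*(u,t) = u\,e^{t\Phi(u)}$, and note $\Phi(1)=0$, which makes the evaluations at $u=1$ clean.

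For the mean, differentiate once to get $\frac{\partial}{\partial u} H^*(u,t) = \bigl(1 + u t\,\Phi'(u)\bigr)e^{t\Phi(u)}$. Since $\Phi'(1) = \sum_{i=1}^{k_1} i\lambda_i - \sum_{j=1}^{k_2} j\mu_j = \eta$, setting $u=1$ yields $\mathbb{E}(N^*(t)) = 1 + \eta t$. For the second factorial moment, differentiate a second time to obtain $\frac{\partial^2}{\partial u^2} H^*(u,t) = \bigl(2t\Phi'(u) + u t\Phi''(u) + u t^2\Phi'(u)^2\bigr)e^{t\Phi(u)}$. A short computation gives $\Phi''(1) = \sum_{i=1}^{k_1} i(i-1)\lambda_i + \sum_{j=1}^{k_2} j(j+1)\mu_j$, and the elementary identities $i(i-1)+i=i^2$ and $j(j+1)-j=j^2$ yield the key simplification $\Phi''(1) + \eta = \sum_{i=1}^{k_1} i^2\lambda_i + \sum_{j=1}^{k_2} j^2\mu_j$. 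Evaluating the second derivative at $u=1$ and assembling the pieces produces $\mathbb{E}((N^*(t))^2) = (1+\eta t)^2 + t\bigl(\sum_{i=1}^{k_1} i^2\lambda_i + \sum_{j=1}^{k_2} j^2\mu_j\bigr)$, so the variance collapses to $t\bigl(\sum_{i=1}^{k_1} i^2\lambda_i + \sum_{j=1}^{k_2} j^2\mu_j\bigr)$, exactly as stated.

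No real obstacle is expected beyond careful chain-rule bookkeeping in the second derivative; the only non-cosmetic input is the factorial-to-plain-moment identity highlighted above. As a sanity check, one may instead multiply the state-equation (\ref{ctranprob}) by $n$ and by $n^2$, sum over $n$, and perform the index shifts $n\mapsto n-i$ and $n\mapsto n+j$ in the relevant sums (each contributing a $\pm i$ or $\pm j$ correction); this alternative route produces the linear ODEs $\frac{\mathrm{d}}{\mathrm{d}t}m^*(t) = \eta$ with $m^*(0)=1$ and $\frac{\mathrm{d}}{\mathrm{d}t}m_2^*(t) = 2\eta\,m^*(t) + \sum_{i=1}^{k_1} i^2\lambda_i + \sum_{j=1}^{k_2} j^2\mu_j$ with $m_2^*(0)=1$, whose explicit solutions agree with the pgf computation and confirm the claim.
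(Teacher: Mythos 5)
Your proposal is correct and follows essentially the same route as the paper: differentiate the explicit pgf $H^*(u,t)=u\,e^{t\Phi(u)}$ from (\ref{conspgfsol}) once and twice, evaluate at $u=1$, and convert the second factorial moment to the variance via $\mathrm{Var}(N^*(t))=\frac{\partial^2}{\partial u^2}H^*(u,t)\big|_{u=1}+\mathbb{E}(N^*(t))-\left(\mathbb{E}(N^*(t))\right)^2$. The key simplification $\Phi''(1)+\eta=\sum_{i=1}^{k_1}i^2\lambda_i+\sum_{j=1}^{k_2}j^2\mu_j$ is exactly the cancellation the paper relies on, and your computations check out.
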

\begin{proof}
	On taking the derivatives with respect to $u$ on both sides of (\ref{conspgfsol}), we get
	\begin{equation*}
		\frac{\partial}{\partial u}H^*(u,t)=e^{-\Lambda t}e^{\left(\sum_{i=1}^{k_1}\lambda_iu^i+\sum_{j=1}^{k_2}\mu_ju^{-j}\right)t}\left(1+\bigg(\sum_{i=1}^{k_1}i\lambda_iu^{i}-\sum_{j=1}^{k_2}j\mu_ju^{-j}\bigg)t\right)
	\end{equation*}
	and
	\begin{align*}
		\frac{\partial^2}{\partial u^2}H^*(u,t)&=e^{-\Lambda t}e^{\left(\sum_{i=1}^{k_1}\lambda_iu^i+\sum_{j=1}^{k_2}\mu_ju^{-j}\right)t}\left(\sum_{i=1}^{k_1}i\lambda_iu^{i-1}-\sum_{j=1}^{k_2}j\mu_ju^{-j-1}\right)t\\
		&\ \ +\left(\sum_{i=1}^{k_1}i(i-1)\lambda_iu^{i-2}+\sum_{j=1}^{k_2}j(j+1)\mu_ju^{-j-2}\right)tH^*(u,t)\\
		&\ \ +\left(\sum_{i=1}^{k_1}i\lambda_iu^{i-1}-\sum_{j=1}^{k_2}j\mu_ju^{-j-1}\right)t\frac{\partial}{\partial u}H^*(u,t).
	\end{align*}
	So,
	$
		\mathbb{E}(N^*(t))=\frac{\partial}{\partial u}H^*(u,t)\bigg|_{u=1}=1+\eta t
	$
	and
	\begin{equation*}
		\mathrm{Var}(N^*(t))=\frac{\partial^2}{\partial u^2}H^*(u,t)\bigg|_{u=1}+\mathbb{E}(N^*(t))-(\mathbb{E}(N^*(t)))^2=\left(\sum_{i=1}^{k_1}i^2\lambda_i+\sum_{j=1}^{k_2}j^2\mu_j\right)t.
	\end{equation*}
	This completes the proof.
\end{proof}

\section{Cumulative births and deaths in GLBDP}\label{sec3}
Let $B(t)$ denotes the total number of individuals born by time $t$. 

\begin{proposition}\label{prop3.1}
	The joint pmf $	p(b,n,t)=\mathrm{Pr}\{B(t)=b,{N}(t)=n\}$ 
	solves the following system of differential equations:
	\begin{align}\label{cumdis}
		\frac{\mathrm{d}}{\mathrm{d}t}p(b,n,t)&=-\left(\sum_{i=1}^{k_1}\lambda_i+\sum_{j=1}^{k_2}\mu_j\right)np(b,n,t)+\sum_{i=1}^{k_1}(n-i)\lambda_ip(b-i,n-i,t)\nonumber\\
		&\ \ +
		\sum_{j=1}^{k_2}(n+j)\mu_jp(b,n+j,t),\ b\ge1,\,n\ge0,
	\end{align}
with initial conditions
\begin{equation}\label{cumini}
	p(b,n,0)=\begin{cases}
		1,\ b=n=1,\\
		0,\ \text{otherwise},
	\end{cases}
\end{equation} 
 where $p(b,n,t)=0$ for all $b<0$ or $n<0$.
\end{proposition}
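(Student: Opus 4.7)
The plan is to mimic exactly the derivation used earlier for the marginal equations \eqref{tranprob}, but now tracking the pair $(B(t), N(t))$ instead of just $N(t)$. Since $B$ is non-decreasing in $t$ and is incremented by $i$ precisely when $N$ jumps upward by $i$, the joint process $(B(t), N(t))$ is again a continuous-time Markov chain on $\mathbb{N}_0^2$. From the infinitesimal description of GLBDP (linear rates $n\lambda_i$ and $n\mu_j$) and the assumption that simultaneous birth/death events have probability $o(h)$, the one-step transition probabilities on a small interval of length $h$ starting from $(b,n)$ are
\begin{equation*}
\Pr\{(B(t+h),N(t+h)) = (b',n') \mid (B(t),N(t))=(b,n)\} =
\begin{cases} n\lambda_i h + o(h), & (b',n')=(b+i,n+i),\ 1 \le i \le k_1,\\
n\mu_j h + o(h), & (b',n')=(b, n-j),\ 1\le j \le k_2,\\
1 - \bigl(\sum_{i=1}^{k_1} n\lambda_i + \sum_{j=1}^{k_2} n\mu_j\bigr)h + o(h), & (b',n') = (b,n),\\
o(h), & \text{otherwise.}
\end{cases}
\end{equation*}

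Next I would condition on the joint state at time $t$ to write $p(b,n,t+h)$ as a sum over the three possible kinds of contributing events: no transition from $(b,n)$, a birth of some size $i$ moving the process from $(b-i,n-i)$ to $(b,n)$, or a death of some size $j$ moving it from $(b, n+j)$ to $(b,n)$. This yields
\begin{align*}
p(b,n,t+h) &= \Bigl(1 - \sum_{i=1}^{k_1} n\lambda_i h - \sum_{j=1}^{k_2} n\mu_j h\Bigr) p(b,n,t) + \sum_{i=1}^{k_1} (n-i)\lambda_i\, h\, p(b-i,n-i,t) \\
&\quad + \sum_{j=1}^{k_2} (n+j)\mu_j\, h\, p(b,n+j,t) + o(h),
\end{align*}
where the convention $p(b,n,t)=0$ for $b<0$ or $n<0$ makes the sums well-defined at the boundary (in particular, the term $i=n$ in the birth sum is zero because the rate factor $(n-i)\lambda_i$ vanishes when $n-i=0$ means that a birth of size $i$ out of state $0$ is impossible in the linear case). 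Subtracting $p(b,n,t)$, dividing by $h$, and letting $h\downarrow 0$ produces \eqref{cumdis}.

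The initial condition \eqref{cumini} is read off directly from the assumption $N(0)=1$ together with $B(0)=1$, the convention being that the single progenitor is counted as the initial birth; any other value of $(b,n)$ has probability zero at $t=0$. The only mildly delicate step is justifying the interchange of the $h\to 0$ limit with the infinitesimal expansion at the boundary states $n=0$ and small $b$; this is handled automatically by the vanishing convention for $p(b,n,t)$ with negative arguments together with the linear rates, so no separate case analysis for $n=0$ is required. The remainder of the argument is mechanical, paralleling the derivation of \eqref{tranprob} verbatim.
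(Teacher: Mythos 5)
Your proposal is correct and follows essentially the same route as the paper: write the infinitesimal transition probabilities of the joint Markov chain $(B(t),N(t))$, condition on the state at time $t$, subtract $p(b,n,t)$, divide by $h$, and let $h\to0$, with the vanishing convention for negative arguments handling the boundary. No gaps.
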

\begin{proof}
	In an infinitesimal time
	 interval of length $h$ such that $o(h)/h\to0$ as $h\to0$, we have
	\begin{multline*}
		\mathrm{Pr}\{B(t+h)=b+j,{N}(t+h)=n+k|\,B(t)=b,{N}(t)=n\}\\
		=\begin{cases}
			1-\left(\sum_{i=1}^{k_1}\lambda_i+\sum_{i=1}^{k_2}\mu_i\right)nh+o(h),\ j=k=0,\vspace{0.1cm}\\
			n\lambda_jh+o(h),\ j=k=1,2,\ldots,k_1,\vspace{0.1cm}\\
			n\mu_{-k}h+o(h),\ j=0,\,-k=1,2,\ldots,k_2,\vspace{0.1cm}\\
			o(h),\ \mathrm{otherwise}.
		\end{cases}
	\end{multline*} 
 So,
	\begin{align*}
		p(b,n,t+h)&=\left(1-\Bigg(\sum_{i=1}^{k_1}\lambda_i+\sum_{j=1}^{k_2}\mu_j\Bigg)nh\right)p(b,n,t)+\sum_{i=1}^{k_1}(n-i)\lambda_ihp(b-i,n-i,t)\\
		&\ \ +\sum_{j=1}^{k_2}(n+j)\mu_jhp(b,n+j,t)+o(h).
	\end{align*}
Equivalently,
\begin{align*}
	\frac{p(b,n,t+h)-p(b,n,t)}{h}&=-\left(\sum_{i=1}^{k_1}\lambda_i+\sum_{j=1}^{k_2}\mu_j\right)np(b,n,t)+\sum_{i=1}^{k_1}(n-i)\lambda_ip(b-i,n-i,t)\\
	&\ \ +\sum_{j=1}^{k_2}(n+j)\mu_jp(b,n+j,t)+\frac{o(h)}{h}.
\end{align*}
On letting $h\to0$, we get the required result. 
\end{proof}
\begin{proposition}\label{prop3.2}
	The pgf  $G(u,v,t)=\mathbb{E}\left(u^{B(t)}v^{{N}(t)}\right)$,  $|u|\leq1$, $|v|\leq1$
solves
	\begin{equation}\label{cumpgf}
			\frac{\partial}{\partial t}G(u,v,t)=\left(\sum_{i=1}^{k_1}\lambda_iv((uv)^i-1)+\sum_{j=1}^{k_2}\mu_jv(v^{-j}-1)\right)\frac{\partial}{\partial v}G(u,v,t)
	\end{equation}
	with $G(u,v,0)=uv.$
\end{proposition}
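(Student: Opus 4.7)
The plan is to derive (\ref{cumpgf}) from the system (\ref{cumdis}) established in Proposition \ref{prop3.1}, by multiplying both sides by $u^b v^n$ and summing over $b \ge 0, n \ge 0$. The initial condition $G(u,v,0) = uv$ is immediate from (\ref{cumini}), since only the term $p(1,1,0) = 1$ contributes. For the PDE itself, I would interchange $\partial_t$ with the double summation, which identifies the left-hand side of the resulting equation with $\partial_t G(u,v,t)$.

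I then treat the three groups of terms on the right-hand side separately. The diagonal contribution yields $-(\sum_i \lambda_i + \sum_j \mu_j) v\, \partial_v G$, using $\sum_{b,n \ge 0} n u^b v^n p(b,n,t) = v\, \partial_v G$. For the birth terms, the index shift $(b',n') = (b-i, n-i)$, together with the convention $p(b,n,t) = 0$ for $b < 0$ or $n < 0$, gives
\begin{equation*}
\sum_{i=1}^{k_1} \lambda_i \sum_{b,n} (n-i) u^b v^n p(b-i, n-i, t) = \sum_{i=1}^{k_1} \lambda_i (uv)^i \cdot v \frac{\partial G}{\partial v}.
\end{equation*}
For the death terms, the shift $n' = n + j$ produces
\begin{equation*}
\sum_{j=1}^{k_2} \mu_j \sum_{b,n} (n+j) u^b v^n p(b, n+j, t) = \sum_{j=1}^{k_2} \mu_j v^{-j} \cdot v \frac{\partial G}{\partial v}.
\end{equation*}
Collecting the three contributions factors out $v\, \partial_v G$, and the bracket $\sum_i \lambda_i ((uv)^i - 1) + \sum_j \mu_j (v^{-j} - 1)$ left behind is exactly the coefficient in (\ref{cumpgf}).

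The main care is needed in justifying the interchange of summation with $\partial_t$ and $\partial_v$, a point already handled in the proof of Proposition \ref{thm1}. Since $p(b,n,t) \le p(n,t) = \mathrm{Pr}\{N(t) = n\}$ and $\sum_n n p(n,t) = \mathbb{E}(N(t)) = e^{\eta t} < \infty$ by (\ref{meanvar}), the differentiated double sum for $\partial_v G$ is dominated uniformly on $|u|, |v| \le 1$ by $\mathbb{E}(N(t))$, which secures all the termwise manipulations. No new idea beyond that of Proposition \ref{thm1} is needed; the derivation is otherwise a straightforward algebraic computation via index shifts, and I expect the convergence justification to be the only genuinely technical step.
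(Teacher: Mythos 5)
Your proof is correct and follows essentially the same route as the paper's: multiply (\ref{cumdis}) by $u^bv^n$, reindex the birth and death sums, and identify each block with a multiple of $v\,\partial_v G$. The only (shared) looseness is in the death block, where the shift $n'=n+j$ actually yields a sum over $n'\ge j$ rather than $n'\ge 0$; the paper conceals the same boundary terms by formally summing over $n\ge -k_2$, so your argument is no less rigorous than the original.
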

\begin{proof} Using (\ref{cumini}), we get $G(u,v,0)=\sum_{b=0}^{\infty}\sum_{n=0}^{\infty}p(b,n,0)u^bv^n=uv$. On multiplying with $u^bv^n$ on both sides of (\ref{cumdis}) and by adjusting the terms, we get
\begin{align*}
	\frac{\partial}{\partial t}G(u,v,t)&=-\left(\sum_{i=1}^{k_1}\lambda_i+\sum_{j=1}^{k_2}\mu_j\right)v\frac{\partial}{\partial v}u^bv^np(b,n,t)+\sum_{i=1}^{k_1}\lambda_iu^iv^{i+1}\frac{\partial}{\partial v}u^{b-i}v^{n-i}p(b-i,n-i,t)\\
	&\ \ +\sum_{j=1}^{k_2}\mu_jv^{-j+1}\frac{\partial}{\partial v}u^bv^{n+j}p(b,n+j,t).
\end{align*}
On summing over $b=0,1,2,\dots$ and $n=-k_2,-k_2+1,\dots$, we get the required result.
\end{proof}
Let $\theta_u\leq0$ and $\theta_v\leq0$. On taking $u=e^{\theta_u}$ and $v=e^{\theta_v}$ in (\ref{cumpgf}), we get the cumulant generating function (cgf) $K(\theta_u,\theta_v,t)=\ln G(e^{\theta_u},e^{\theta_v},t)$ as a solution of the following differential equation:
\
	\begin{equation}\label{cumpgf1}
	\frac{\partial}{\partial t}K(\theta_u,\theta_v,t)=\left(\sum_{i=1}^{k_1}\lambda_i\left(e^{i(\theta_u+\theta_v)}-1\right)+\sum_{j=1}^{k_2}\mu_j\left(e^{-j\theta_v}-1\right)\right)\frac{\partial}{\partial \theta_v}K(\theta_u,\theta_v,t)	
	\end{equation} 
	with  $K(\theta_u,\theta_v,0)={\theta_u+\theta_v}.$
	
In (\ref{cumpgf1}), we use the following expanded form of the cgf (see Kendall (1948), Eq. (35)):
\begin{equation}\label{cgfequ}
	K(\theta_u,\theta_v,t)= \theta_u\mathbb{E}(B(t))+\theta_v\mathbb{E}(N(t))+\theta_u^2/2\mathrm{Var}(B(t))+\theta_v^2/2\mathrm{Var}(N(t))+\theta_u\theta_v\mathrm{Cov}(B(t),N(t))+\dots.
\end{equation}
to obtain 
\begin{align}
	\frac{\mathrm{d}}{\mathrm{d}t}\bigg(\theta_u\mathbb{E}(B(t))&+\theta_v\mathbb{E}({N}(t))+\frac{\theta_u^2}{2}\mathrm{Var}(B(t))+\frac{\theta_v^2}{2}\mathrm{Var}({N}(t))+\theta_u\theta_v\mathrm{Cov}(B(t),{N}(t))+\cdots\bigg)\nonumber\\
	&=\sum_{l=1}^{\infty}\left(\sum_{i=1}^{k_1}\lambda_i\frac{((\theta_u+\theta_v)i)^l}{l!}+\sum_{j=1}^{k_2}\mu_j\frac{(-\theta_vj)^l}{l!}\right)\big(\mathbb{E}({N}(t))+\theta_v\mathrm{Var}({N}(t))\nonumber\\
	&\ \ +\theta_u\mathrm{Cov}(B(t),{N}(t))+\cdots\big).\label{comp}
\end{align}

\begin{proposition}\label{prop3.3}
	For $t\ge0$, we have\\
	\noindent (i) $
	\mathbb{E}({N}(t))=\begin{cases}
		e^{\eta t},\ \eta\ne0,\vspace{0.1cm}\\
		1,\ \eta=0,
	\end{cases}
	$\\
	\noindent (ii) $ \mathrm{Var}({N}(t))=\begin{cases}
		\frac{\zeta(e^{2\eta t}-e^{\eta t})}{\eta},\ \eta\ne0,\vspace{0.1cm}\\
		\zeta t,\ \eta=0,
	\end{cases}$\\
	\noindent (iii) $
	\mathrm{Cov}(B(t),{N}(t))=\begin{cases}
		\frac{\zeta\sum_{i=1}^{k_1}i\lambda_i}{\eta^2}\left(e^{2\eta t}-e^{\eta t}\right)-\frac{\xi}{\eta}te^{\eta t},\ \eta\ne0,\vspace{0.1cm}\\
		\sum_{i=1}^{k_1}i^2\lambda_it+\zeta\sum_{i=1}^{k_1}i\lambda_i\frac{t^2}{2},\ \eta=0,
	\end{cases}$\\
	
	\noindent (iv) $
	\mathbb{E}(B(t))=\begin{cases}
		\frac{\sum_{i=1}^{k_1}i\lambda_i}{\eta}e^{\eta t}-\frac{\sum_{j=1}^{k_2}j\mu_j}{\eta},\ \eta\ne0,\vspace{0.1cm}\\
		\sum_{i=1}^{k_1}i\lambda_it+1,\ \eta=0,
	\end{cases}$\\
	
	\noindent (v)
	$
	\mathrm{Var}(B(t))=\begin{cases}
		\frac{\sum_{i=1}^{k_1}i^2\lambda_i}{\eta}(e^{\eta t}-1)+2\sum_{i=1}^{k_1}i\lambda_i\bigg(\frac{\zeta \sum_{i=1}^{k_1}i\lambda_i}{2\eta^3}\left(e^{\eta t}-1\right)^2-\frac{\xi}{\eta^3}\left(\eta t e^{\eta t}-e^{\eta t}+1\right)\bigg),\ \eta\ne0,\vspace{0.1cm}\\
		\sum_{i=1}^{k_1}i^2\lambda_it+\sum_{i=1}^{k_1}i\lambda_i\left(\sum_{i=1}^{k_1}i^2\lambda_i{t^2}+\zeta\sum_{i=1}^{k_1}i\lambda_i\frac{t^3}{3}\right),\ \eta=0,
	\end{cases}$
where $\eta=\sum_{i=1}^{k_1}i\lambda_1-\sum_{j=1}^{k_2}j\mu_j$, $\zeta=\sum_{i=1}^{k_1}i^2\lambda_i+\sum_{j=1}^{k_2}j^2\mu_j$ and $\xi=\sum_{i=1}^{k_1}i^2\lambda_i\sum_{j=1}^{k_2}j\mu_j+\sum_{i=1}^{k_1}i\lambda_i\sum_{j=1}^{k_2}j^2\mu_j$.
\end{proposition}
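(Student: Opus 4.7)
The plan is to exploit the PDE (\ref{cumpgf1}) for the cgf $K(\theta_u,\theta_v,t)$ together with the power-series expansion (\ref{cgfequ}), and then to match coefficients of $\theta_u^a\theta_v^b$ for $a+b\in\{1,2\}$ on the two sides of (\ref{comp}). This reduces the proposition to a finite triangular system of linear first-order ODEs in $t$: each cumulant equation has a source term depending only on lower-order cumulants already computed, so everything can be solved sequentially. Writing $A=\sum_i i\lambda_i$, $M=\sum_j j\mu_j$ (so $\eta=A-M$), $A_2=\sum_i i^2\lambda_i$, $M_2=\sum_j j^2\mu_j$ (so $\zeta=A_2+M_2$), the $l=1$ factor on the right of (\ref{comp}) is $A\theta_u+\eta\theta_v$ and the $l=2$ factor is $\tfrac{A_2}{2}\theta_u^2+A_2\theta_u\theta_v+\tfrac{\zeta}{2}\theta_v^2$; terms with $l\geq 3$ do not contribute to the coefficients of $\theta_u^a\theta_v^b$ with $a+b\leq 2$.

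The first two moments come from the linear coefficients. Matching $[\theta_v]$ gives $\tfrac{d}{dt}\mathbb{E}(N)=\eta\mathbb{E}(N)$, and with $\mathbb{E}(N(0))=1$ this yields part (i) in both cases $\eta\ne0$ and $\eta=0$. Matching $[\theta_u]$ gives $\tfrac{d}{dt}\mathbb{E}(B)=A\,\mathbb{E}(N)$; integration with $\mathbb{E}(B(0))=1$ produces part (iv), where for $\eta\ne0$ the identity $1-A/\eta=-M/\eta$ puts the answer in the stated form. Matching $[\tfrac{\theta_v^2}{2}]$ gives $\tfrac{d}{dt}\mathrm{Var}(N)=2\eta\,\mathrm{Var}(N)+\zeta\,\mathbb{E}(N)$, a linear ODE whose solution with $\mathrm{Var}(N(0))=0$ is part (ii).

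For the mixed and pure $B$-moments I match $[\theta_u\theta_v]$ and $[\tfrac{\theta_u^2}{2}]$, obtaining
\begin{equation*}
\tfrac{d}{dt}\mathrm{Cov}(B,N)=\eta\,\mathrm{Cov}(B,N)+A\,\mathrm{Var}(N)+A_2\,\mathbb{E}(N),\qquad
\tfrac{d}{dt}\mathrm{Var}(B)=2A\,\mathrm{Cov}(B,N)+A_2\,\mathbb{E}(N).
\end{equation*}
Both ODEs have zero initial data. For $\eta\ne0$ I would use the integrating factor $e^{-\eta t}$ on the first; substituting the formula for $\mathrm{Var}(N)$ produces a combination of $e^{2\eta t}$ and $e^{\eta t}$, integrating gives part (iii), and the identity $A_2-A\zeta/\eta=-(A M_2+A_2 M)/\eta=-\xi/\eta$ is what rewrites the answer in the stated form. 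Plugging the resulting $\mathrm{Cov}(B,N)$ into the second ODE and integrating termwise—using $\int_0^t s e^{\eta s}\,ds=(\eta t e^{\eta t}-e^{\eta t}+1)/\eta^2$ and the identity $(e^{\eta t}-1)^2=(e^{2\eta t}-1)-2(e^{\eta t}-1)$—gives part (v). The $\eta=0$ cases are obtained the same way but with the ODEs reducing to direct integration (or, equivalently, by taking limits $\eta\to0$ and recovering polynomials in $t$).

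The bookkeeping in the last two computations is the main obstacle: it is easy to drop factors of $2$ when converting between $[\tfrac{\theta_u^2}{2}]$ and the coefficient of $\theta_u^2$, and the algebraic identity $\xi=A\zeta-A_2\eta$ is what reconciles the raw integration output with the form stated in (iii) and (v). Once these are handled, differentiating the expansion term-by-term is justified by absolute convergence of the cgf in a neighbourhood of the origin (ensured because all cumulants are finite when $\mathbb{E}(N(t))$ and $\mathbb{E}(B(t))$ are), so the formal coefficient matching from (\ref{comp}) is rigorous.
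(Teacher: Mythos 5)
Your proposal is correct and follows essentially the same route as the paper: expand the cgf as in (\ref{cgfequ}), match coefficients of $\theta_u^a\theta_v^b$ with $a+b\le 2$ in (\ref{comp}) to obtain the same triangular system of linear ODEs (with the same source terms $A\theta_u+\eta\theta_v$ at $l=1$ and $\tfrac{A_2}{2}\theta_u^2+A_2\theta_u\theta_v+\tfrac{\zeta}{2}\theta_v^2$ at $l=2$), and solve sequentially from the stated zero/unit initial data. Your explicit identity $\xi=A\zeta-A_2\eta$ and the remark on justifying termwise differentiation are details the paper leaves implicit, but the argument is the same.
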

\begin{proof}
On comparing the coefficients on the both sides of the (\ref{comp}), we get
\begin{align*}
	\frac{\mathrm{d}}{\mathrm{d}t}\mathbb{E}({N}(t))&=\begin{cases}
		\eta\mathbb{E}({N}(t)),\ \eta\ne0,\vspace{0.1cm}\\
		0,\ \eta=0.
	\end{cases}\\
	\frac{\mathrm{d}}{\mathrm{d}t}\mathrm{Var}({N}(t))&=\begin{cases}
		2\eta\mathrm{Var}({N}(t))+\zeta\mathbb{E}({N}(t)),\ \eta\ne0,\vspace{0.1cm}\\
		\zeta,\ \eta=0.
	\end{cases}\\
	\frac{\mathrm{d}}{\mathrm{d}t}\mathrm{Cov}(B(t),{N}(t))&=\begin{cases}
		\sum_{i=1}^{k_1}i^2\lambda_i\mathbb{E}({N}(t))+\sum_{i=1}^{k_1}i\lambda_i\mathrm{Var}({N}(t))+\eta \mathrm{Cov}(B(t),{N}(t)),\ \eta\ne0,\vspace{0.1cm}\\
		\sum_{i=1}^{k_1}i^2\lambda_i+\sum_{i=1}^{k_1}i\lambda_i\zeta t,\ \eta=0.
	\end{cases}\\
	\frac{\mathrm{d}}{\mathrm{d}t}\mathbb{E}(B(t))&=\begin{cases}
		\sum_{i=1}^{k_1}i\lambda_i\mathbb{E}({N}(t)),\ \eta\ne0,\vspace{0.1cm}\\
		\sum_{i=1}^{k_1}i\lambda_i,\ \eta=0,
	\end{cases}	
\end{align*}
and	 
\begin{equation*}
	\frac{\mathrm{d}}{\mathrm{d}t}\mathrm{Var}(B(t))=\sum_{i=1}^{k_1}i^2\lambda_i\mathbb{E}({N}(t))+2\sum_{i=1}^{k_1}i\lambda_i\mathrm{Cov}(B(t),{N}(t)).
\end{equation*}	
On solving the above differential equations with initial conditions  $\mathbb{E}({N}(0))=1$, $\mathrm{Var}({N}(0))=0$, $\mathrm{Cov}(B(0),{N}(0))=0$, $\mathbb{E}(B(0))=1$ and $\mathrm{Var}(B(0))=0$, respectively, we get the required results. 
\end{proof}

On using (ii), (iii) and (v) of Proposition~\ref{prop3.3}, we can obtain the correlation coefficient between $B(t)$ and $N(t)$ as follows:
\begin{equation*}
	R_B(t)=\mathrm{Corr}(B(t),N(t))=\frac{\mathrm{Cov}(B(t),N(t))}{\sqrt{\mathrm{Var}(B(t))\mathrm{Var}(N(t))}}.
\end{equation*}
\begin{remark}\label{3.1}
	The mean and variance of GLBDP obtained in Proposition~\ref{prop3.2} coincide with (\ref{meanvar}). From the mean of $N(t)$ and $B(t)$, we get the following limiting results: 
	\begin{equation*}
		\lim_{t\to\infty}\mathbb{E}({N}(t))=\begin{cases}
			0,\ \eta<0,\vspace{0.1cm}\\
			1,\ \eta=0,\vspace{0.1cm}\\
			\infty,\ \eta>0, 
		\end{cases}
	\end{equation*}
	and
	\begin{equation*}
		\lim_{t\to\infty}\mathbb{E}(B(t))=\begin{cases}
			\infty,\  \eta\ge0,\\
			-\frac{\sum_{j=1}^{k_2}j\mu_j}{\eta},\ \eta<0,
		\end{cases}
	\end{equation*}
	respectively. Thus, in the long run, if $\eta>0$ then population is expected to grow beyond any bound, for $\eta<0$ the population is expected to grow up to some finite number then it is expected go extinct, and for $\eta=0$ it is expected that infinitely many people will be born however all of them will die out except one.	
\end{remark}
In the case of $k_1=k_2=2$, some numerical examples of cumulative births are given in Figure~\ref{fig1}.
\begin{figure}[!h]
	\includegraphics[width=13cm]{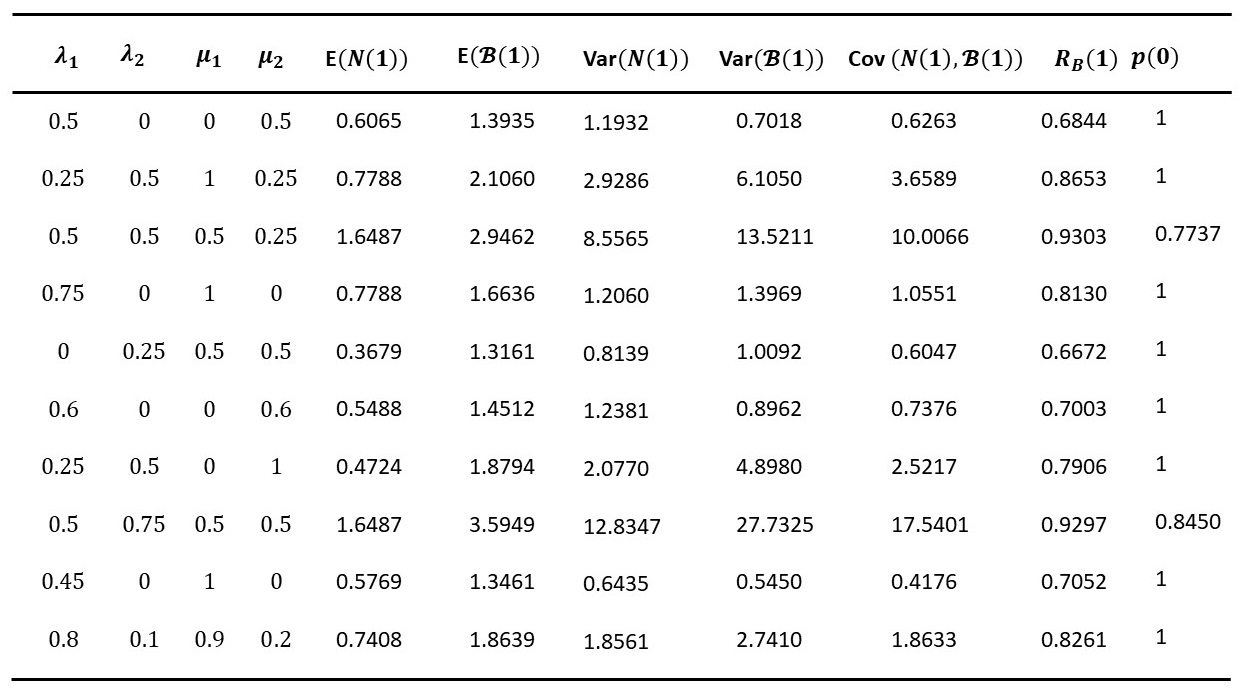}
	\caption[Figure 1]{\small Cumulative births in GLBDP for $k_1=k_2=2$ at $t=1$ .}\label{fig1}
\end{figure}

For different values of $\eta$, the variation of correlation coefficient $R_B(t)$ with respect to $t$ is shown in Figure~\ref{fig4}. It can be observed that if $\eta>0$ then correlation is high for the larger value of $\eta$ and for significantly large value of $t$, we have perfect positive correlation between $N(t)$ and $B(t)$. For $\eta<0$, $R_B(t)$ decreases with time and there is no linear correlation between $N(t)$ and $B(t)$ for the very large value of $t$.

\begin{figure}[htp]
	\includegraphics[width=16cm]{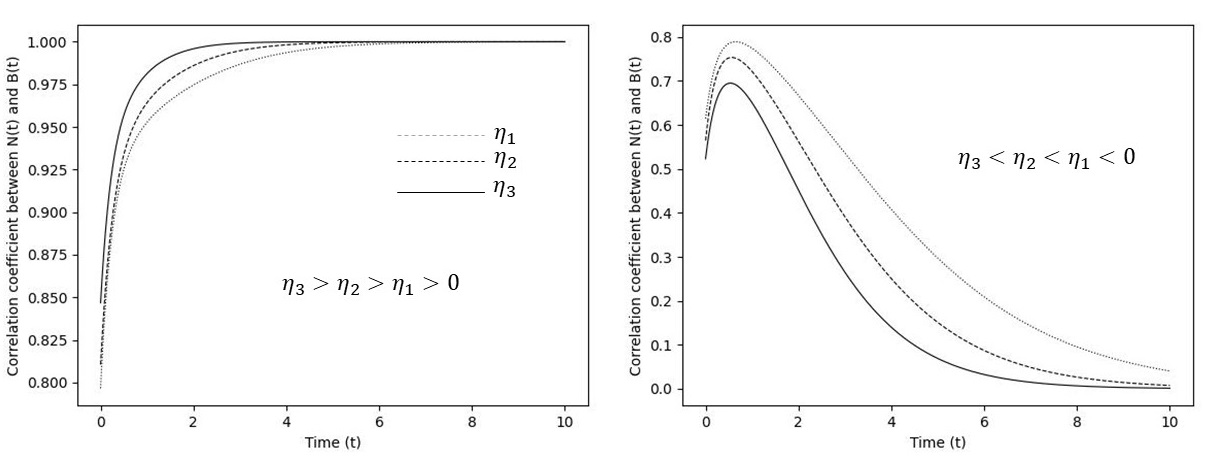}
	\caption[Figure 2]{Correlation between $N(t)$ and $B(t)$ for $k_1=k_2=2$}\label{fig4}
\end{figure}

Let $D(t)$ denotes the total number of deaths by time $t$.
\begin{proposition}\label{prop3.4}
	The joint pmf $q(d,n,t)=\mathrm{Pr}\{D(t)=d,{N}(t)=n\}$
is the solution of the following system of differential equations:
\begin{align}
	\frac{\mathrm{d}}{\mathrm{d}t}q(d,n,t)&=-n\left(\sum_{i=1}^{k_1}\lambda_i+\sum_{j=1}^{k_2}\mu_j\right)q(d,n,t)+\sum_{i=1}^{k_1}(n-i)\lambda_iq(d,n-i,t)\nonumber\\
	&\ \ +\sum_{j=1}^{k_2}(n+j)\mu_jq(d-j,n+j,t),\ d\ge0,\,n\ge0,\label{cumd}
\end{align}
with initial conditions $q(0,1,0)=1$ and $q(d,n,0)=0$ for all $d\ne0$ and $n\ne1$,
where $q(d,n,t)=0$ for all $d<0$ or $n<0$.
\end{proposition}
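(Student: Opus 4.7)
The plan is to mirror the argument used for Proposition~\ref{prop3.1} almost verbatim, with the only change being the bookkeeping of which coordinate is incremented when a death (rather than a birth) occurs. First I would write down the joint infinitesimal transition probabilities for the pair $(D,N)$: given $D(t)=d$ and $N(t)=n$, in a time interval of length $h$ the pair stays at $(d,n)$ with probability $1-n\big(\sum_{i=1}^{k_1}\lambda_i+\sum_{j=1}^{k_2}\mu_j\big)h+o(h)$, moves to $(d,n+i)$ with probability $n\lambda_i h+o(h)$ for a birth of size $i\in\{1,\dots,k_1\}$, and moves to $(d+j,n-j)$ with probability $n\mu_j h+o(h)$ for a death of size $j\in\{1,\dots,k_2\}$; all other transitions have probability $o(h)$. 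These follow from the GLBDP transition probabilities in~(\ref{gbdp}) with $\lambda_{(n)_i}=n\lambda_i$, $\mu_{(n)_j}=n\mu_j$, after additionally tracking the cumulative death count.

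Next, conditioning $q(d,n,t+h)$ on the joint state at time $t$ and enumerating the (mutually exclusive, up to $o(h)$) ways the process can be in $(d,n)$ at time $t+h$, I would obtain
\begin{align*}
q(d,n,t+h) &= \Big(1-n\big(\textstyle\sum_{i=1}^{k_1}\lambda_i+\sum_{j=1}^{k_2}\mu_j\big)h\Big)q(d,n,t)+\sum_{i=1}^{k_1}(n-i)\lambda_i\,h\,q(d,n-i,t)\\
&\quad+\sum_{j=1}^{k_2}(n+j)\mu_j\,h\,q(d-j,n+j,t)+o(h).
\end{align*}
The key observation here is that to reach $(d,n)$ via a death of size $j$ the process must have been in state $(d-j,n+j)$, since such a death adds $j$ to $D$ while subtracting $j$ from $N$; this explains the shift $q(d-j,n+j,t)$ in the death term and is the only structural departure from the cumulative-births calculation in Proposition~\ref{prop3.1}.

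Finally, subtracting $q(d,n,t)$ from both sides, dividing by $h$, and letting $h\to 0$ produces~(\ref{cumd}). The initial condition $q(0,1,0)=1$ follows from the GLBDP convention of a single progenitor at time $t=0$ (so $N(0)=1$) together with the fact that no deaths have occurred by time $0$ (so $D(0)=0$); all other $q(d,n,0)$ vanish. The boundary conventions $q(d,n,t)=0$ for $d<0$ or $n<0$ are used implicitly to interpret the sum terms when $d<j$ or $n<i$. I do not expect any genuine obstacle: the only point requiring care is the direction of the shift in the death term, and once that is recorded correctly the argument is a routine transcription of the proof of Proposition~\ref{prop3.1}.
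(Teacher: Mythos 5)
Your proposal is correct and follows essentially the same route as the paper: write the joint infinitesimal transition probabilities for $(D(t),N(t))$, condition $q(d,n,t+h)$ on the state at time $t$, and pass to the limit $h\to0$, with the death transition correctly identified as coming from $(d-j,n+j)$ at rate $(n+j)\mu_j$. No gaps.
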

\begin{proof}
	In an infinitesimal interval of length $h$ such that $o(h)/h$ as $h\to0$, we have 
	\begin{multline*}
		\mathrm{Pr}\{D(t+h)=d+j,{N}(t+h)=n+k|D(t)=d,{N}(t)=n\}\\
		=\begin{cases}
			1-\bigg(\sum_{i=1}^{k_1}\lambda_i+\sum_{i=1}^{k_2}\mu_i\bigg)nh+o(h),\ j=k=0,\vspace{0.1cm}\\
			n\lambda_kh+o(h),\ j=0,\,k=1,2,\ldots,k_1,\vspace{0.1cm}\\
			n\mu_jh+o(h),\ k=-j,\,j=1,2,\ldots,k_2,\vspace{0.1cm}\\
			o(h),\ \mathrm{otherwise}.
		\end{cases}
	\end{multline*}
So,
\begin{align*}
	q(d,n,t+h)&=\left(1-\left(\sum_{i=1}^{k_1}\lambda_i+\sum_{j=1}^{k_2}\mu_j\right)nh\right)q(d,n,t)+\sum_{i=1}^{k_1}(n-i)h\lambda_ip(d,n-i,t)\\
	&\ \ +\sum_{j=1}^{k_2}(n+j)h\mu_jq(d-j,n+j,t)+o(h).
\end{align*}
After rearranging the terms and on letting $h\to0$, we get the required result.
\end{proof}

The proof of the next result is along the similar line to that of Proposition~\ref{prop3.2}, thus it is omitted.
\begin{proposition} 
	The pgf $\mathcal{G}(u,v,t)=\mathbb{E}\left(u^{D(t)}v^{{N}(t)}\right)$, $|u|\leq1$, $|v|\leq1$
solves 
\begin{equation*}
		\frac{\partial}{\partial t}\mathcal{G}(u,v,t)=\left(\sum_{i=1}^{k_1}\lambda_iv(v^i-1)+\sum_{j=1}^{k_2}\mu_jv((uv^{-1})^j-1)\right)\frac{\partial}{\partial v}\mathcal{G}(u,v,t)
\end{equation*}
with $\mathcal{G}(u,v,0)=v.$
\end{proposition}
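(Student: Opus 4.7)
The plan mirrors the proof of Proposition~\ref{prop3.2}, with the roles of birth and death swapped in how the first coordinate is shifted. First, from $q(0,1,0)=1$ and $q(d,n,0)=0$ otherwise, one reads off directly $\mathcal{G}(u,v,0) = \sum_{d=0}^{\infty}\sum_{n=0}^{\infty} u^d v^n q(d,n,0) = v$, which is the required initial condition.

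Next I would multiply both sides of (\ref{cumd}) by $u^d v^n$ and rewrite each monomial factor so that the right-hand side becomes a sum of $\partial/\partial v$ applied to $u^{\alpha} v^{\beta} q(\cdot,\cdot,t)$ blocks. The three identities needed are
\begin{align*}
n\, u^d v^n q(d,n,t) &= v\,\frac{\partial}{\partial v}\bigl[u^d v^n q(d,n,t)\bigr],\\
(n-i)\,u^d v^n q(d,n-i,t) &= v^{i+1}\,\frac{\partial}{\partial v}\bigl[u^d v^{n-i}q(d,n-i,t)\bigr],\\
(n+j)\,u^d v^n q(d-j,n+j,t) &= u^j v^{-j+1}\,\frac{\partial}{\partial v}\bigl[u^{d-j}v^{n+j}q(d-j,n+j,t)\bigr].
\end{align*}
The factor $u^j$ in the last line is the only substantive novelty compared with Proposition~\ref{prop3.2}: since a death of size $j$ now increments $D(t)$ by $j$ while decrementing $N(t)$ by $j$, the corresponding pgf operator picks up a $u^j$ rather than a pure power of $v$.

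Summing over $d\ge 0$ and $n\ge -k_2$ (using $q(d,n,t)=0$ for $d<0$ or $n<0$) and re-indexing $n\mapsto n+i$ in the birth sum and $(d,n)\mapsto(d+j,n-j)$ in the death sum, each bracketed expression collapses to $\mathcal{G}(u,v,t)$, which yields
\begin{equation*}
\frac{\partial}{\partial t}\mathcal{G}(u,v,t) = \Bigl(\sum_{i=1}^{k_1}\lambda_i(v^{i+1}-v) + \sum_{j=1}^{k_2}\mu_j(u^j v^{-j+1}-v)\Bigr)\frac{\partial}{\partial v}\mathcal{G}(u,v,t),
\end{equation*}
and this rearranges into the stated PDE after factoring $v$ out of each term. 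The one technical point is the interchange of the double summation with $\partial/\partial v$; this is justified exactly as in the proof of Proposition~\ref{thm1}, since $\mathbb{E}(N(t))<\infty$ forces $\sum_{d,n} n\,|v|^{n-1}|u|^{d}q(d,n,t) \le \mathbb{E}(N(t))<\infty$ for $|u|,|v|\le 1$. Beyond this routine verification, I do not expect a genuine obstacle.
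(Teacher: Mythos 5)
Your proposal is correct and follows exactly the route the paper intends: the paper omits this proof, stating only that it runs along the same lines as Proposition~\ref{prop3.2}, and your computation (multiplying (\ref{cumd}) by $u^dv^n$, rewriting each term as a $v$-derivative with the extra $u^j$ factor attached to the death terms, and summing) is precisely that argument, with the algebra and the interchange-of-limits justification both checking out.
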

Thus, the cgf $\mathcal{K}(\theta_u,\theta_v,t)=\ln\mathcal{G}(e^{\theta_u},e^{\theta_v},t)$, $\theta_u\leq0$, $\theta_v\leq0$ solves
	\begin{equation}\label{cumgd1}
			\frac{\partial}{\partial t}\mathcal{K}(\theta_u,\theta_v,t)=\left(\sum_{i=1}^{k_1}\lambda_i(e^{i\theta_v}-1)+\sum_{j=1}^{k_2}\mu_j(e^{j(\theta_u-\theta_v)}-1)\right)\frac{\partial}{\partial \theta_v}\mathcal{K}(\theta_u,\theta_v,t)
	\end{equation}
	with $	\mathcal{K}(\theta_u,\theta_v,0)={\theta_v}.$

On using (\ref{cgfequ}), we get
\begin{align}\label{cumgd2}
	\frac{\mathrm{d}}{\mathrm{d}t}\big(\theta_u\mathbb{E}(D(t))&+\theta_v\mathbb{E}({N}(t))+\frac{\theta_u^2}{2}\mathrm{Var}(D(t))+\frac{\theta_v^2}{2}\mathrm{Var}({N}(t))+\theta_u\theta_v\mathrm{Cov}(D(t),{N}(t))+\cdots\big)\nonumber\\
	&=\sum_{l=1}^{\infty}\bigg(\sum_{i=1}^{k_1}\lambda_i\frac{(i\theta_v)^l}{l!}+\sum_{j=1}^{k_2}\mu_j\frac{(j(\theta_u-\theta_v))^l}{l!}\bigg)\big(\mathbb{E}({N}(t))+\theta_v\mathrm{Var}({N}(t))\nonumber\\
	&\ \ +\theta_u\mathrm{Cov}(D(t),{N}(t))+\cdots\big)
\end{align}

On comparing the coefficients on the both side of the (\ref{cumgd2}), we get
\begin{align*}
	\frac{\mathrm{d}}{\mathrm{d}t}\mathbb{E}(D(t))&=\begin{cases}
		\sum_{j=1}^{k_2}j\mu_j\mathbb{E}(N(t)),\ \eta\ne0,\vspace{0.1cm}\\
		\sum_{j=1}^{k_2}j\mu_j,\ \eta=0,
	\end{cases}\\
	\frac{\mathrm{d}}{\mathrm{d}t}\mathrm{Cov}(D(t),{N}(t))&=\begin{cases}
		\sum_{j=1}^{k_2}\left(-j^2\mu_j\mathbb{E}({N}(t))+j\mu_j\mathrm{Var}({N}(t))\right)+\eta\mathrm{Cov}(D(t),{N}(t)),\ \eta\ne0,\vspace{0.1cm}\\
		\sum_{j=1}^{k_2}\left(-j^2\mu_j+j\mu_j\left(\sum_{i=1}^{k_1}i^2\lambda_i+\sum_{j=1}^{k_2}j^2\mu_j\right)t\right),\ \eta=0,
	\end{cases}
\end{align*}
and
\begin{equation*}\label{VD}
	\frac{\mathrm{d}}{\mathrm{d}t}\mathrm{Var}(D(t))=
	\sum_{j=1}^{k_2}j^2\mu_j\mathbb{E}(N(t))+2\sum_{j=1}^{k_2}j\mu_j\mathrm{Cov}(D(t),N(t)).
\end{equation*}
On solving these differential equations with initial conditions $\mathbb{E}(D(0))=0$, $\mathrm{Cov}(D(0),{N}(0))=0$ and $\mathrm{Var}(D(0))=0$, respectively, we get the following result.
\begin{proposition}\label{prop5}
	For $t\ge0$, we have

		\noindent (i) $\mathbb{E}(D(t))=\begin{cases}
			\frac{	\sum_{j=1}^{k_2}j\mu_j}{\eta}(e^{\eta t}-1),\ \eta\ne0,\vspace{0.1cm}\\
			\sum_{j=1}^{k_2}j\mu_jt,\ \eta=0,
		\end{cases}$\\
		\noindent (ii) $\mathrm{Cov}(D(t),{N}(t))=\begin{cases}
			\frac{\zeta\sum_{j=1}^{k_2}j\mu_j}{\eta^2}\left(e^{2\eta t}-e^{\eta t}\right)-\frac{\xi}{\eta}te^{\eta t},\ \eta\ne0,\vspace{0.1cm}\\
			-\sum_{j=1}^{k_2}j^2\mu_jt+\zeta\sum_{j=1}^{k_2}j\mu_j\frac{t^2}{2},\ \eta=0,
		\end{cases}$\\
		\noindent (iii) $\mathrm{Var}(D(t))=\begin{cases}
			\frac{\sum_{j=1}^{k_2}j^2\mu_j}{\eta}(e^{\eta t}-1)+2\sum_{j=1}^{k_2}j\mu_j\bigg(\frac{\zeta \sum_{j=1}^{k_2}j\mu_j}{2\eta^3}\left(e^{\eta t}-1\right)^2-\frac{\xi}{\eta^3}\left(\eta t e^{\eta t}-e^{\eta t}+1\right)\bigg),\ \eta\ne0,\vspace{0.1cm}\\
			\sum_{j=1}^{k_2}j^2\mu_jt+\sum_{j=1}^{k_2}j\mu_j\left(-\sum_{j=1}^{k_2}j^2\mu_j{t^2}+\zeta\sum_{j=1}^{k_2}j\mu_j\frac{t^3}{3}\right),\ \eta=0.
		\end{cases}$
\end{proposition}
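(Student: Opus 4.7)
The plan is to integrate, in order, the three ordinary differential equations for $\mathbb{E}(D(t))$, $\mathrm{Cov}(D(t),N(t))$ and $\mathrm{Var}(D(t))$ that have already been derived by comparing coefficients in \eqref{cumgd2}. All three are first-order linear, and their right-hand sides involve only quantities whose explicit forms are known from Proposition~\ref{prop3.3}, so the proof reduces to standard integrating-factor manipulations carried out twice for each formula (once assuming $\eta\ne 0$ and once assuming $\eta=0$), together with verification of the initial conditions $\mathbb{E}(D(0))=\mathrm{Cov}(D(0),N(0))=\mathrm{Var}(D(0))=0$ which are all immediate from $D(0)=0$ and $N(0)=1$.

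I would handle $\mathbb{E}(D(t))$ first: substituting Proposition~\ref{prop3.3}(i) into $\frac{d}{dt}\mathbb{E}(D(t))=\sum_{j=1}^{k_2}j\mu_j\,\mathbb{E}(N(t))$ and integrating from $0$ to $t$ yields $\bigl(\sum_j j\mu_j/\eta\bigr)(e^{\eta t}-1)$ when $\eta\ne 0$ and $\bigl(\sum_j j\mu_j\bigr)t$ when $\eta=0$, which is part (i). For part (ii), I would rewrite the ODE as $\frac{d}{dt}\mathrm{Cov}(D(t),N(t))-\eta\,\mathrm{Cov}(D(t),N(t)) = -\sum_j j^2\mu_j\mathbb{E}(N(t))+\sum_j j\mu_j\mathrm{Var}(N(t))$, plug in the expressions for $\mathbb{E}(N(t))=e^{\eta t}$ and $\mathrm{Var}(N(t))=\zeta(e^{2\eta t}-e^{\eta t})/\eta$ from Proposition~\ref{prop3.3}, multiply by the integrating factor $e^{-\eta t}$, and integrate; the exponentials combine cleanly and the constant $\xi=\sum_i i^2\lambda_i\sum_j j\mu_j+\sum_i i\lambda_i\sum_j j^2\mu_j$ emerges naturally after simplification, while the $te^{\eta t}$ term comes from the $e^{\eta t}\cdot e^{-\eta t}$ piece in the integrand. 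The $\eta=0$ case is even easier, since the integrating factor is trivial and one just integrates a polynomial in $t$.

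For part (iii) I would follow the same template on $\frac{d}{dt}\mathrm{Var}(D(t))=\sum_j j^2\mu_j\mathbb{E}(N(t))+2\sum_j j\mu_j\mathrm{Cov}(D(t),N(t))$: since there is no $\mathrm{Var}(D(t))$ on the right, no integrating factor is needed and the formula comes from directly integrating the known expressions for $\mathbb{E}(N(t))$ and $\mathrm{Cov}(D(t),N(t))$ obtained in the two preceding steps. The three terms in the closed-form answer correspond exactly to the antiderivatives of $e^{\eta t}$, $(e^{\eta t}-1)^2$, and $te^{\eta t}$ multiplied by the appropriate constants, and the initial condition $\mathrm{Var}(D(0))=0$ pins down the constant of integration.

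The main obstacle I anticipate is purely organizational: ensuring that the algebra collapses to the stated form involving precisely the constants $\eta$, $\zeta$ and $\xi$, rather than producing an equivalent but bulkier expression. Handling the $\eta=0$ case separately (rather than trying to take a limit from the $\eta\ne 0$ formulas, which yields $0/0$ indeterminacies) avoids the only real pitfall. No new ideas beyond integrating-factor methods for linear first-order ODEs are required.
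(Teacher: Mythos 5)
Your proposal matches the paper's own argument: the paper derives exactly these three first-order linear ODEs by comparing coefficients in its cumulant-generating-function equation and then simply states that solving them with the zero initial conditions gives the proposition, which is precisely the integrating-factor computation you describe (and your observation that $\xi=\eta\sum_j j^2\mu_j+\zeta\sum_j j\mu_j$ makes the constant collapse correctly checks out). No gap; this is the same route, just with the routine integrations written out.
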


Here, again using the results of Proposition~\ref{prop3.2} and Proposition~\ref{prop5}, we can calculate the correlation coefficient ${R_D}(t)=\mathrm{Corr}(D(t),N(t))$ between $D(t)$ and $N(t)$. For $k_1=k_2=2$, some numerical examples of cumulative deaths are given in Figure~\ref{fig2}.
\begin{figure}[htp]
	\includegraphics[width=13cm]{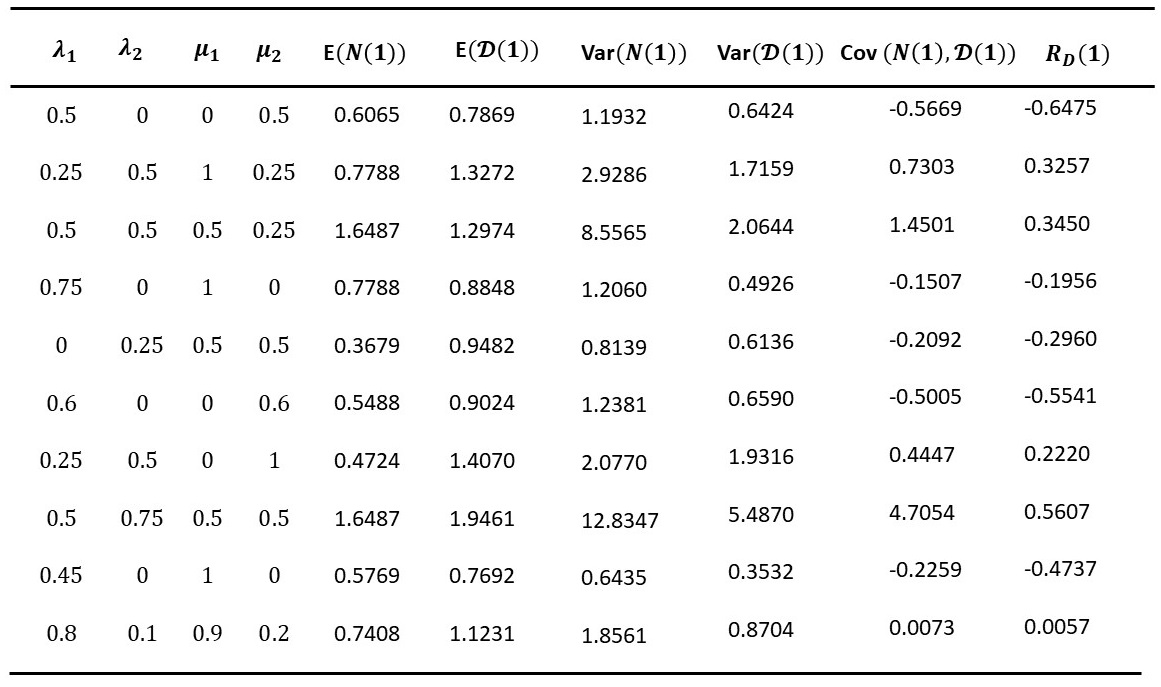}
	\caption[Figure2]{\small Cumulative deaths in GLBDP for $k_1=k_2=2$ at time $t=1$.}\label{fig2}
\end{figure}

From Figure~\ref{fig5}, we can observe that for $\eta>0$, as $t$ increases the correlation coefficient $R_D(t)$ changes from negative to positive and it is strong for large values of $t$. For $\eta<0$ there is no correlation between $N(t)$ and $D(t)$.

\begin{figure}[htp]
	\includegraphics[width=16cm]{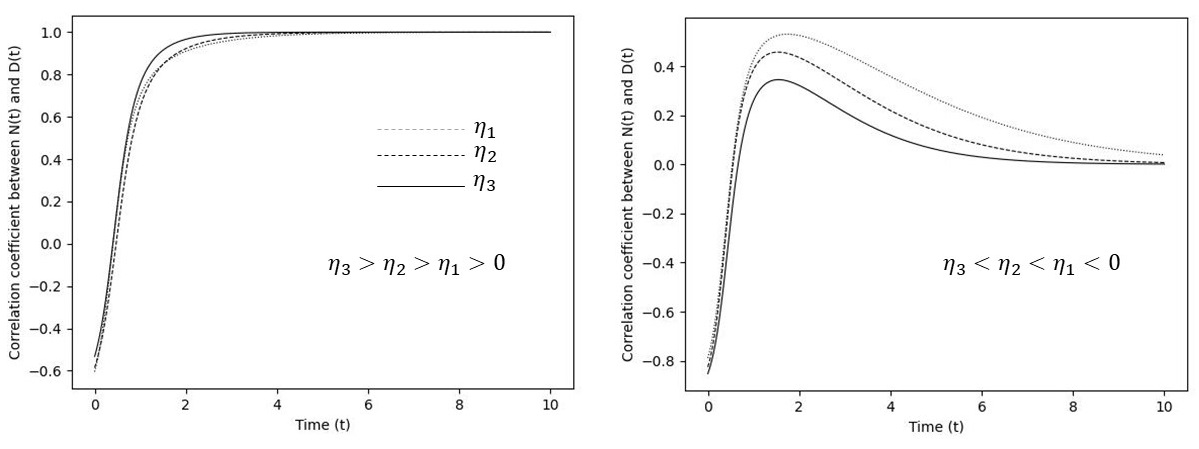}
	\caption[Figure 5]{Correlation between $N(t)$ and $D(t)$ for $k_1=k_2=2$}\label{fig5}
\end{figure}

\begin{proposition}\label{prop3.7}
	The joint pmf $	p(d,b,n,t)=\mathrm{Pr}\{D(t)=d,B(t)=b,{N}(t)=n\}$ is the solution of the following system of differential equations:
	\begin{align}
		\frac{\mathrm{d}}{\mathrm{d}t}p(d,b,n,t)&=-\bigg(\sum_{i=1}^{k_1}\lambda_i+\sum_{j=1}^{k_2}\mu_j\bigg)np(d,b,n,t)+\sum_{i=1}^{k_1}(n-i)\lambda_ip(d,b-i,n-i,t)\nonumber\\
		&\ \ +\sum_{j=1}^{k_2}(n+j)\mu_jp(d-j,b,n+j,t),\ d\ge0,\,b\ge1,\,n\ge0,\label{jointdbn}
	\end{align}
	with initial conditions $p(0,1,1,0)=1$ and $p(d,b,n,0)=0$ for all $d\ne0$, $b\ne1$ and $n\ne1$,
	where $p(d,b,n,t)=0$ for $d<0$ or $b<0$ or $n<0$.
\end{proposition}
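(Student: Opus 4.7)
The plan is to mirror the arguments used for Propositions~\ref{prop3.1} and~\ref{prop3.4}, adding one extra coordinate. I would begin by writing down the infinitesimal transition probabilities of the trivariate process $(D(t),B(t),{N}(t))$ in an interval of length $h$ with $o(h)/h\to 0$ as $h\to 0$. Conditionally on being in state $(d,b,n)$ at time $t$, the only contributions of order $h$ come from: no transition, with probability $1-n\bigl(\sum_{i=1}^{k_1}\lambda_i+\sum_{j=1}^{k_2}\mu_j\bigr)h$; a birth of size $i\in\{1,\ldots,k_1\}$, which takes $(d,b,n)$ to $(d,b+i,n+i)$ with probability $n\lambda_i h$; and a death of size $j\in\{1,\ldots,k_2\}$, which takes $(d,b,n)$ to $(d+j,b,n-j)$ with probability $n\mu_j h$. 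All other outcomes are absorbed into $o(h)$.

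Next I would apply the law of total probability to $p(d,b,n,t+h)$, summing over the admissible predecessor states at time $t$. The only point requiring attention is that each inflow rate depends on the pre-jump population size: arriving at $(d,b,n)$ via a birth of size $i$ requires the previous state to be $(d,b-i,n-i)$, so the rate is $(n-i)\lambda_i$ rather than $n\lambda_i$; arriving via a death of size $j$ requires the previous state $(d-j,b,n+j)$ and rate $(n+j)\mu_j$. The convention that $p(\cdot,\cdot,\cdot,t)$ vanishes on negative coordinates automatically truncates the sums near the boundary, so I do not need separate statements for small $n$ or $b$.

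Finally, subtracting $p(d,b,n,t)$ from both sides of the resulting difference equation, dividing by $h$, and letting $h\to 0$ would yield~(\ref{jointdbn}) after the $o(h)/h$ term drops out. The initial condition $p(0,1,1,0)=1$ is the standing assumption of a single progenitor at time $0$, which forces $N(0)=B(0)=1$ and $D(0)=0$. There is no genuine obstacle beyond the bookkeeping of which pre-jump state feeds each term and attaching the correct state-dependent rate; the argument is essentially the trivariate analogue of Proposition~\ref{prop3.4}, and the proof can be left to the reader or presented in a few lines in the same style as that of Proposition~\ref{prop3.1}.
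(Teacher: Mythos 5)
Your proposal is correct and follows essentially the same route as the paper: write the infinitesimal transition probabilities of the trivariate process (births move $(d,b,n)$ to $(d,b+i,n+i)$ at rate $n\lambda_i$, deaths to $(d+j,b,n-j)$ at rate $n\mu_j$), condition on the state at time $t$ with the pre-jump rates $(n-i)\lambda_i$ and $(n+j)\mu_j$ attached to the inflow terms, then divide by $h$ and let $h\to0$. The treatment of the boundary via the vanishing convention and of the initial condition also matches the paper.
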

\begin{proof}
	Let  $h$ be the length of an infinitesimal interval such that $o(h)/h\to0$ as $h\to0$. Then,  
	\begin{multline*}
		\mathrm{Pr}\{D(t+h)=d+j,B(t+h)=b+k,{N}(t+h)=n+m|D(t)=d,B(t)=b,{N}(t)=n\}\\
		=\begin{cases}
			1-\left(\sum_{i=1}^{k_1}\lambda_i+\sum_{i=1}^{k_2}\mu_i\right)nh+o(h),\ j=k=m=0,\vspace{0.1cm}\\
			n\lambda_kh+o(h),\ j=0,\,k=m=1,2,\ldots,k_1,\vspace{0.1cm}\\
			n\mu_jh+o(h),\ k=0,\,m=-j,\,j=1,2,\ldots,k_2,\vspace{0.1cm}\\
			o(h),\ \mathrm{otherwise}.
		\end{cases}
	\end{multline*}
	So,
	\begin{align*}
		p(d,b,n,t+h)&=\bigg(1-\bigg(\sum_{i=1}^{k_1}\lambda_i+\sum_{j=1}^{k_2}\mu_j\bigg)nh\bigg)p(d,b,n,t)+\sum_{i=1}^{k_1}(n-i)\lambda_ihp(d,b-i,n-i,t)\\
		&\ \ +\sum_{j=1}^{k_2}(n+j)\mu_jhp(d-j,b,n+j,t)+o(h).
	\end{align*}
	After rearranging the terms and on letting $h\to0$, we get the required result.
\end{proof}
	The pgf $	U(u,v,w,t)=\mathbb{E}(u^{D(t)}v^{B(t)}w^{{N}(t)})$, $|u|\leq1$, $|v|\leq1$, $|w|\leq1$ of $(D(t),B(t),N(t))$
	satisfies the following differential equation:
	\begin{equation*}
		\frac{\partial}{\partial t}U(u,v,w,t)=\bigg(\sum_{i=1}^{k_1}\lambda_iw((vw)^i-1)+\sum_{j=1}^{k_2}\mu_jw((uw^{-1})^j-1)\bigg)\frac{\partial}{\partial w}U(u,v,w,t)
	\end{equation*}
	with initial condition $U(u,v,w,0)=vw.$
	
	As $N(t)=B(t)-D(t)$, $t\ge0$, we have 
	$
		\mathrm{Var}(N(t))=\mathrm{Var}(D(t))+\mathrm{Var}(B(t))-2\mathrm{Cov}(D(t),B(t)).
	$
	So, the covariance of $B(t)$ and $D(t)$, $t\ge0$ is 
	\begin{align*}
		\mathrm{Cov}(D(t),B(t))&=\frac{1}{2}(\mathrm{Var}(D(t))+\mathrm{Var}(B(t))-\mathrm{Var}(N(t)))\\
		&=\frac{1}{2}\Bigg(\frac{\zeta}{\eta^3}\Bigg(\bigg(\sum_{i=1}^{k_1}i\lambda_i\bigg)^2+\bigg(\sum_{j=1}^{k_2}j\mu_j\bigg)^2\Bigg)\left(e^{\eta t}-1\right)^2-\frac{\zeta}{\eta}\left(e^{\eta t}-1\right)^2\\
		&\ \ -\frac{2\xi}{\eta^3}\Bigg(\sum_{i=1}^{k_1}i\lambda_i+\sum_{j=1}^{k_2}j\mu_j\Bigg)\left(\eta t e^{\eta t}-e^{\eta t}+1\right)\Bigg)\\
		&=\frac{\zeta\sum_{i=1}^{k_1}\sum_{j=1}^{k_2}\lambda_i\mu_j}{\eta^3}\left(e^{\eta t}-1\right)^2-\frac{\xi}{\eta^3}\Bigg(\sum_{i=1}^{k_1}i\lambda_i+\sum_{j=1}^{k_2}j\mu_j\Bigg)\left(\eta t e^{\eta t}-e^{\eta t}+1\right),\ \eta\ne0,
	\end{align*}
	which can further be used to calculate the correlation coefficient $R(t)=\mathrm{Corr}(D(t),B(t))$. 
	
	Next, we do the similar study for $\{N^*(t)\}_{t\ge0}$, that is, the GBDP with constant birth and death rates.
\subsection{GBDP with constant birth and death rates}
Let $B^*(t)$ and $D^*(t)$ be the total number of births and deaths up to time $t$ in the process $\{N^*(t)\}_{t\ge0}$, respectively. Similar to the result of Proposition~\ref{prop3.7}, the joint pmf $p^*(d,b,n,t)=\mathrm{Pr}\{D^*(t)=d,B^*(t)=b,N^*(t)=n\}$ solves the following  system of differential equations:
	\begin{align*}
		\frac{\mathrm{d}}{\mathrm{d}t}p^*(d,b,n,t)&=-\bigg(\sum_{i=1}^{k_1}\lambda_i+\sum_{j=1}^{k_2}\mu_j\bigg)p^*(d,b,n,t)+\sum_{i=1}^{k_1}\lambda_ip^*(d,b-i,n-i,t)\\
		&\ \ +\sum_{j=1}^{k_2}\mu_jp^*(d-j,b,n+j,t),\ d\ge0,\,b\ge1,\,n\ge0
	\end{align*}
	with initial conditions $p^*(0,1,1,0)=1$ and $p^*(d,b,n,t)=0$ for all $d\ne0$, $b\ne1$ and $n\ne1$.
	
	So, the pgf $U^*(u,v,w,t)=\mathbb{E}(u^{D^*(t)}v^{B^*(t)}w^{N^*(t)})$, $|u|\leq1$, $|v|\leq1$, $|w|\leq1$ is the solution of the following differential equation:
	\begin{equation*}
		\frac{\mathrm{d}}{\mathrm{d}t}U^*(u,v,w,t)=\left(\sum_{i=1}^{k_1}\lambda_i\left((vw)^i-1\right)+\sum_{j=1}^{k_2}\mu_j\left((uw^{-1})^j-1\right)\right)U^*(u,v,w,t)
	\end{equation*}
	with $U^*(u,v,w,0)=vw$. It is given by
	\begin{equation}\label{pgfdbn}
		U^*(u,v,w,t)=vwe^{-\Lambda t}\exp{\left(\sum_{i=1}^{k_1}\lambda_i(vw)^i+\sum_{j=1}^{k_2}\mu_j(uw^{-1})^j\right)t}.
	\end{equation}
	
\begin{theorem}
	The joint pmf $p^*(d,b,n,t)=\mathrm{Pr}\{D^*(t)=d,B^*(t)=b,N^*(t)=n\}$ is given by
	\begin{equation}\label{pmfdbn}
		p^*(d,b,n,t)=\sum_{\Omega^*(k_1,k_2,d,b,n)}\prod_{i=1}^{k_1}\prod_{j=1}^{k_2}\frac{\lambda_i^{x_i}\mu_j^{y_j}t^{x_i+y_j}}{x_i!y_j!}e^{-\Lambda t},\ d\ge0,\,b\ge1,\,n\ge0,
	\end{equation}
	where
	$
		\Omega^*(k_1,k_2,d,b,n)=\bigg\{(x_1,\ldots,x_{k_1},y_1,\ldots,y_{k_2})\in \mathbb{N}_0^{k_1+k_2}:\sum_{j=1}^{k_2}jy_j=d,\,\sum_{i=1}^{k_1}ix_i=b-1,\,b-d=n\bigg\}.
	$
\end{theorem}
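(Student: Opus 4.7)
The plan is to read off the joint pmf $p^*(d,b,n,t)$ directly as the coefficient of $u^d v^b w^n$ in the explicit generating function (\ref{pgfdbn}). First I would expand the exponential as a Taylor series,
\begin{equation*}
    U^*(u,v,w,t) = vw\,e^{-\Lambda t}\sum_{l=0}^{\infty}\frac{t^l}{l!}\left(\sum_{i=1}^{k_1}\lambda_i(vw)^i + \sum_{j=1}^{k_2}\mu_j(uw^{-1})^j\right)^{l},
\end{equation*}
and then apply the multinomial theorem to the inner $l$-th power. Each resulting term is indexed by a tuple $(x_1,\dots,x_{k_1},y_1,\dots,y_{k_2})\in\mathbb{N}_0^{k_1+k_2}$ with $\sum_i x_i+\sum_j y_j=l$, carries the multinomial coefficient $l!/(\prod_i x_i!\prod_j y_j!)$, and contributes the monomial $\prod_i\lambda_i^{x_i}(vw)^{ix_i}\prod_j\mu_j^{y_j}(uw^{-1})^{jy_j}$.

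Next I would track the exponents of $u$, $v$, $w$ after multiplying by the prefactor $vw$: the total power of $u$ is $\sum_j jy_j$, the total power of $v$ is $1+\sum_i ix_i$, and the total power of $w$ is $1+\sum_i ix_i-\sum_j jy_j$. Extracting the coefficient of $u^d v^b w^n$ therefore forces the three constraints
\begin{equation*}
    \sum_{j=1}^{k_2} jy_j = d, \qquad \sum_{i=1}^{k_1} ix_i = b-1, \qquad b-d = n,
\end{equation*}
which are exactly the conditions defining $\Omega^*(k_1,k_2,d,b,n)$.

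Finally, since each tuple in $\Omega^*(k_1,k_2,d,b,n)$ uniquely determines the index $l=\sum_i x_i+\sum_j y_j$, the $l!$ in the multinomial coefficient cancels against the $1/l!$ in the Taylor expansion, and the double sum collapses to a single sum over $\Omega^*(k_1,k_2,d,b,n)$. Collecting the remaining factors $\lambda_i^{x_i}\mu_j^{y_j} t^{x_i+y_j}/(x_i!y_j!)$ together with $e^{-\Lambda t}$ produces precisely the expression (\ref{pmfdbn}). The only real obstacle is the careful simultaneous bookkeeping of the three exponent constraints; no analytic subtlety intervenes, because the series in the exponential converges absolutely on any compact polydisc and may be rearranged termwise.
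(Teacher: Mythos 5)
Your proposal is correct and follows essentially the same route as the paper: expand the exponential in (\ref{pgfdbn}) as a power series in $t$, apply the multinomial theorem to the $l$-th power, track the exponents of $u$, $v$, $w$, and read off the coefficient of $u^dv^bw^n$, which yields exactly the constraint set $\Omega^*(k_1,k_2,d,b,n)$. Your version is in fact slightly more explicit than the paper's about the cancellation of $l!$ against the multinomial coefficient and about the justification for termwise rearrangement, but the argument is the same.
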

\begin{proof}
	From (\ref{pgfdbn}), we have
	\begin{equation*}
		U^*(u,v,w,t)=vwe^{-\Lambda t}\sum_{l=0}^{\infty}\bigg(\sum_{i=1}^{k_1}\lambda_i(vw)^i+\sum_{j=1}^{k_2}\mu_j(uw^{-1})^j\bigg)^l\frac{t^l}{l!}.
	\end{equation*}
Equivalently,
	\begin{equation*}
			U^*(u,v,w,t)=e^{-\Lambda t}\sum_{l=0}^{\infty}\sum_{S^*(k_1,k_2,l)}\prod_{i=1}^{k_1}\prod_{j=1}^{k_2}\frac{\lambda_i^{x_i}\mu_j^{y_j}t^{x_i+y_j}}{x_i!y_j!}u^{jy_j}v^{ix_i+1}w^{ix_i-jy_j+1},
	\end{equation*}
where 
$
		S^*(k_1,k_2,l)=\big\{(x_1,\ldots,x_{k_1},y_1,\ldots,y_{k_2}):x_i,y_j\in\{0,1,\ldots,l\},\,\sum_{i=1}^{k_1}x_i+\sum_{j=1}^{k_2}y_j=l\big\}.
$ 	
So, the coefficients of $u^dv^bw^n$ in $U^*(u,v,w,t)$ is the required result.
\end{proof}
\begin{remark}
	From (\ref{pmfdbn}), we get the following joint pmfs
	\begin{align*}
		p^*(b,n,t)&=\mathrm{Pr}\{B^*(t)=b,N^*(t)=n\}=\sum_{\Omega^*(k_1,k_2,b,n)}\prod_{i=1}^{k_1}\prod_{j=1}^{k_2}\frac{\lambda_i^{x_i}\mu_j^{y_j}t^{x_i+y_j}}{x_i!y_j!}e^{-\Lambda t},\ b\ge1,\,n\ge0,\\
		q^*(d,n,t)&=\mathrm{Pr}\{D^*(t)=b,N^*(t)=n\}=\sum_{\Omega^{**}(k_1,k_2,d,n)}\prod_{i=1}^{k_1}\prod_{j=1}^{k_2}\frac{\lambda_i^{x_i}\mu_j^{y_j}t^{x_i+y_j}}{x_i!y_j!}e^{-\Lambda t},\ d\ge0,\,n\ge0,
	\end{align*}
	and 
	\begin{equation*}
		\tilde{p}^*(d,b,t)=\mathrm{Pr}\{D^*(t)=d,B^*(t)=b\}=\sum_{\Omega^{***}(k_1,k_2,d,b)}\prod_{i=1}^{k_1}\prod_{j=1}^{k_2}\frac{\lambda_i^{x_i}\mu_j^{y_j}t^{x_i+y_j}}{x_i!y_j!}e^{-\Lambda t},\ d\ge0,\,b\ge1,
	\end{equation*}
	where 
	\begin{align*}			\Omega^*(k_1,k_2,b,n)&=\bigg\{(x_1,\ldots,x_{k_1},y_1,\ldots,y_{k_2})\in \mathbb{N}_0^{k_1+k_2}:\sum_{i=1}^{k_1}ix_i=b-1,\,\sum_{i=1}^{k_1}ix_i-\sum_{j=1}^{k_2}jy_j=n-1\bigg\},\\
		\Omega^{**}(k_1,k_2,d,n)&=\bigg\{(x_1,\ldots,x_{k_1},y_1,\ldots,y_{k_2})\in \mathbb{N}_0^{k_1+k_2}:\sum_{j=1}^{k_2}jy_j=d,\,\sum_{i=1}^{k_1}ix_i-\sum_{j=1}^{k_2}jy_j=n-1\bigg\},
	\end{align*}
	and
	\begin{equation*}
		\Omega^{***}(k_1,k_2,d,b)=\bigg\{(x_1,\ldots,x_{k_1},y_1,\ldots,y_{k_2})\in \mathbb{N}_0^{k_1+k_2}:\sum_{j=1}^{k_2}jy_j=d,\,\sum_{i=1}^{k_1}ix_i=b-1\bigg\}.
	\end{equation*}
\end{remark}
\begin{remark} From (\ref{pmfdbn}), we obtain the following marginal pmfs:
	\begin{equation*}
		\tilde{p}^*(d,t)=\mathrm{Pr}\{D^*(t)=d\}=\sum_{\Omega^{*}(k_2,d)}\prod_{j=1}^{k_2}\frac{(\mu_jt)^{y_j}}{y_j!}e^{-\sum_{j=1}^{k_2}\mu_jt},\ d\ge0,
	\end{equation*}
	and 
	\begin{equation*}
		{p}^*(b,t)=\mathrm{Pr}\{B^*(t)=b\}=\sum_{\Omega^{**}(k_1,b)}\prod_{i=1}^{k_1}\frac{(\lambda_it)^{x_i}}{x_i!}e^{-\sum_{i=1}^{k_1}\lambda_it},\ b\ge1,
	\end{equation*}
	where 
	$
		\Omega^{*}(k_2,d)=\bigg\{(y_1,\ldots,y_{k_2})\in \mathbb{N}_0^{k_2}:\sum_{j=1}^{k_2}jy_j=d\bigg\}$ and $	\Omega^{**}(k_1,b)=\bigg\{(x_1,\ldots,x_{k_1})\in \mathbb{N}_0^{k_1}:\sum_{i=1}^{k_1}ix_i=b-1\bigg\}.
	$
	\end{remark}
	\begin{remark}
		Note that the joint pmf of $(D^*(t),B^*(t))$ is the product of their marginal pmfs. Hence,  ${\{D^*(t)\}}_{t\ge0}$ and ${\{B^*(t)\}}_{t\ge0}$ are two independent random processes. 
	\end{remark}
	\begin{proposition}
		For $t\ge0$, we have\\
		\noindent (i) $\mathbb{E}(B^*(t))=1+\sum_{i=1}^{k_1}i\lambda_it$,\\
		\noindent (ii) $\mathbb{E}(D^*(t))=\sum_{j=1}^{k_2}j\mu_jt$,\\
		\noindent (iii) $\mathrm{Var}(B^*(t))=\sum_{i=1}^{k_1}i^2\lambda_it$,\\
		\noindent (iv) $\mathrm{Var}(D^*(t))=\sum_{j=1}^{k_2}j^2\mu_jt$.
	\end{proposition}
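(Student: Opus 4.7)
The plan is to extract the marginal pgfs of $B^*(t)$ and $D^*(t)$ from the joint pgf $U^*(u,v,w,t)$ in (\ref{pgfdbn}), and then obtain the required first and second moments by direct differentiation. Setting $u=w=1$ in (\ref{pgfdbn}), the $\mu_j$ contributions in the exponent cancel against the corresponding terms in $e^{-\Lambda t}$ (using $\Lambda=\sum_{i=1}^{k_1}\lambda_i+\sum_{j=1}^{k_2}\mu_j$), leaving
\begin{equation*}
G_{B^*}(v,t)=\mathbb{E}\bigl(v^{B^*(t)}\bigr)=v\exp\Bigg(\sum_{i=1}^{k_1}\lambda_i(v^i-1)t\Bigg).
\end{equation*}
Analogously, setting $v=w=1$ yields
\begin{equation*}
G_{D^*}(u,t)=\mathbb{E}\bigl(u^{D^*(t)}\bigr)=\exp\Bigg(\sum_{j=1}^{k_2}\mu_j(u^j-1)t\Bigg).
\end{equation*}

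Each of these factorizes as the pgf of a weighted sum of independent Poisson variables. More precisely, $B^*(t)-1$ is distributed as $\sum_{i=1}^{k_1}iX_i$ with $X_i\sim\mathrm{Pois}(\lambda_it)$ mutually independent, and $D^*(t)$ is distributed as $\sum_{j=1}^{k_2}jY_j$ with $Y_j\sim\mathrm{Pois}(\mu_jt)$ mutually independent; this is also consistent with the product-of-Poisson form of the marginal pmfs displayed in the preceding Remark, where $x_i$ and $y_j$ play the role of the realized values of $X_i$ and $Y_j$. Items (i) and (ii) then follow immediately from linearity of expectation together with $\mathbb{E}(X_i)=\lambda_it$ and $\mathbb{E}(Y_j)=\mu_jt$, while items (iii) and (iv) follow from independence across $i$ (resp. $j$) together with $\mathrm{Var}(iX_i)=i^2\lambda_it$ and $\mathrm{Var}(jY_j)=j^2\mu_jt$.

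If one prefers to avoid the probabilistic interpretation, the same conclusions can be obtained purely by calculus: evaluate $G_{B^*}'(1,t)$ and $G_{D^*}'(1,t)$ to get the means, and then apply $\mathrm{Var}=G''(1)+G'(1)-(G'(1))^2$; in this last step the cross term $\bigl(\sum_i i\lambda_it\bigr)^2$ cancels cleanly against the square of the mean, leaving exactly the linear-in-$t$ expression claimed. No real obstacle is anticipated here, because once the marginal pgfs have been isolated from (\ref{pgfdbn}) the remainder is a routine moment computation; the only point requiring a small amount of care is the algebraic cancellation of $e^{-\Lambda t}$ against the constant-in-$v$ (resp. constant-in-$u$) piece of the exponent, which is what simplifies the formulas to those of plain weighted-Poisson sums.
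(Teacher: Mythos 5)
Your proposal is correct. It rests on the same object as the paper's proof, namely the joint pgf (\ref{pgfdbn}), but it organizes the computation differently: you first specialize to the marginal pgfs $U^*(1,v,1,t)=v\exp\big(\sum_{i=1}^{k_1}\lambda_i(v^i-1)t\big)$ and $U^*(u,1,1,t)=\exp\big(\sum_{j=1}^{k_2}\mu_j(u^j-1)t\big)$ and then read off the moments, either by recognizing $B^*(t)-1=\sum_i iX_i$ and $D^*(t)=\sum_j jY_j$ as weighted sums of independent Poisson variables or by differentiating at $1$. The paper instead differentiates the full three-variable pgf $U^*(u,v,w,t)$ in $u$ and $v$ and only then evaluates at $u=v=w=1$; since evaluation and differentiation in the remaining variable commute, the two calculations are equivalent, but yours is lighter and the weighted-Poisson identification explains \emph{why} the answers are linear in $t$, dovetailing with the product-of-Poissons marginal pmfs and the independence of $\{B^*(t)\}$ and $\{D^*(t)\}$ noted in the preceding remarks. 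The paper's heavier route has the minor advantage that the same second-order partials are reused immediately afterwards for the covariance matrix $\Sigma$. Both arguments are sound, and your variance bookkeeping (the cancellation of $(\sum_i i\lambda_i t)^2$, and the fact that the additive constant $1$ in $B^*(t)$ does not affect the variance) is correct.
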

	\begin{proof}
		On taking derivatives of (\ref{pgfdbn}) with respect to $u$ and $v$, we get
		\begin{align*}
			\frac{\partial}{\partial u}U^*(u,v,w,t)&=vwe^{-\Lambda t}e^{\left(\sum_{i=1}^{k_1}\lambda_i(vw)^i+\sum_{j=1}^{k_2}\mu_j(uw^{-1})^j\right)t}\sum_{j=1}^{k_2}j\mu_ju^{j-1}w^{-j}t,\\
			\frac{\partial^2}{\partial u^2}U^*(u,v,w,t)&=vwe^{-\Lambda t}e^{\left(\sum_{i=1}^{k_1}\lambda_i(vw)^i+\sum_{j=1}^{k_2}\mu_j(uw^{-1})^j\right)t}\Bigg(\Bigg(\sum_{j=1}^{k_2}j\mu_ju^{j-1}w^{-j}\Bigg)^2t^2\\
			&\ \ +\sum_{j=1}^{k_2}j(j-1)\mu_ju^{j-2}w^{-j}t\Bigg),\\
			\frac{\partial}{\partial v}U^*(u,v,w,t)&=we^{-\Lambda t}e^{\left(\sum_{i=1}^{k_1}\lambda_i(vw)^i+\sum_{j=1}^{k_2}\mu_j(uw^{-1})^j\right)t}\left(1+\sum_{i=1}^{k_1}i\lambda_i(vw)^it\right),
		\end{align*}
		and
		\begin{align*}
			\frac{\partial}{\partial v^2}U^*(u,v,w,t)&=we^{-\Lambda t}e^{\left(\sum_{i=1}^{k_1}\lambda_i(vw)^i+\sum_{j=1}^{k_2}\mu_j(uw^{-1})^j\right)t}\Bigg(\sum_{i=1}^{k_1}i^2\lambda_iv^{i-1}w^it\\
			&\ \ +\Bigg(1+\sum_{i=1}^{k_1}i\lambda_i(vw)^it\Bigg)\sum_{i=1}^{k_1}i\lambda_iv^{i-1}w^it\Bigg).
		\end{align*}
		So,
		\begin{align*}
			\mathbb{E}(D^*(t))&=\frac{\partial}{\partial u}U^*(u,v,w,t)\bigg|_{u=1,v=1,w=1}=\sum_{j=1}^{k_2}j\mu_jt,\\
			\mathbb{E}(B^*(t))&=\frac{\partial}{\partial v}U^*(u,v,w,t)\bigg|_{u=1,v=1,w=1}=1+\sum_{i=1}^{k_1}i\lambda_it,\\
			\mathrm{Var}(D^*(t))&=\frac{\partial^2}{\partial u^2}U^*(u,v,w,t)\bigg|_{u=1,v=1,w=1}+\mathbb{E}(D^*(t))-(\mathbb{E}(D^*(t)))^2=\sum_{j=1}^{k_2}j^2\mu_jt,
		\end{align*}
		and 
		\begin{equation*}
		\mathrm{Var}(B^*(t))=\frac{\partial^2}{\partial v^2}U^*(u,v,w,t)\bigg|_{u=1,v=1,w=1}+\mathbb{E}(N^*(t))-(\mathbb{E}(B^*(t)))^2=\sum_{i=1}^{k_1}i^2\lambda_it.
		\end{equation*}
		This completes the proof.
	\end{proof}
	\begin{proposition}
		The variance-covariance matrix $\Sigma$ of $(D^*(t),B^*(t),N^*(t))$ is given by
		\begin{equation*}
			\Sigma=\begin{pmatrix}
				\sum_{j=1}^{k_2}j^2\mu_jt&0&-\sum_{j=1}^{k_2}j^2\mu_jt\vspace{0.2cm}\\
				0&\sum_{i=1}^{k_1}i^2\lambda_it&\sum_{i=1}^{k_1}i^2\lambda_it\vspace{0.2cm}\\
				-\sum_{j=1}^{k_2}j^2\mu_jt&\sum_{i=1}^{k_1}i^2\lambda_it&\left(\sum_{i=1}^{k_1}i^2\lambda_i+\sum_{j=1}^{k_2}j^2\mu_j\right)t
			\end{pmatrix}
		\end{equation*}
		for all $t\ge0$.
	\end{proposition}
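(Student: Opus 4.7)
The plan is to exploit the linear identity $N^*(t) = 1 + B^*(t) - D^*(t)$, which holds pathwise because every birth of size $i$ increases $N^*$ by $i$ and increases $B^*$ by $i$ while leaving $D^*$ unchanged, and every death of size $j$ decreases $N^*$ by $j$ and increases $D^*$ by $j$ while leaving $B^*$ unchanged, with the initial value $N^*(0) = 1 = 1 + B^*(0) - D^*(0)$. The diagonal entries of $\Sigma$ then come for free: $\mathrm{Var}(D^*(t))$ and $\mathrm{Var}(B^*(t))$ are given by the preceding proposition, and $\mathrm{Var}(N^*(t))$ was computed directly from the pgf $H^*(u,t)$ in the earlier theorem of Section~\ref{subsec1}.

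For the off-diagonal entries, I would first invoke the remark that $\{D^*(t)\}_{t\ge 0}$ and $\{B^*(t)\}_{t\ge 0}$ are independent (established from the factorization of the joint pmf), so $\mathrm{Cov}(D^*(t), B^*(t)) = 0$. Combined with the linear identity, the remaining two covariances reduce to one-line calculations:
\begin{equation*}
\mathrm{Cov}(B^*(t), N^*(t)) = \mathrm{Cov}(B^*(t), B^*(t) - D^*(t)) = \mathrm{Var}(B^*(t)) - 0 = \sum_{i=1}^{k_1} i^2 \lambda_i t,
\end{equation*}
and similarly
\begin{equation*}
\mathrm{Cov}(D^*(t), N^*(t)) = \mathrm{Cov}(D^*(t), B^*(t) - D^*(t)) = 0 - \mathrm{Var}(D^*(t)) = -\sum_{j=1}^{k_2} j^2 \mu_j t.
\end{equation*}
One can cross-check the variance of $N^*(t)$ via $\mathrm{Var}(N^*(t)) = \mathrm{Var}(B^*(t)) + \mathrm{Var}(D^*(t)) - 2\mathrm{Cov}(B^*(t), D^*(t))$, which matches the formula already derived.

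There is no genuine obstacle here; the only point requiring mild care is justifying the pathwise identity $N^*(t) = 1 + B^*(t) - D^*(t)$, which is a bookkeeping observation about the constant-rate GBDP: in an infinitesimal interval the three processes jump in a coordinated way (either $(D^*, B^*, N^*) \mapsto (D^*, B^* + i, N^* + i)$ or $(D^*, B^*, N^*) \mapsto (D^* + j, B^*, N^* - j)$), so the difference $1 + B^*(t) - D^*(t) - N^*(t)$ is identically zero. An alternative, entirely computational, route would be to extract $\mathrm{Cov}(D^*(t), N^*(t))$ and $\mathrm{Cov}(B^*(t), N^*(t))$ from the mixed second partial derivatives of the pgf $U^*(u,v,w,t)$ in \eqref{pgfdbn} evaluated at $(1,1,1)$, but the linearity argument is shorter and makes the structure of $\Sigma$ transparent.
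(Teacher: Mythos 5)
Your proof is correct, but it takes a genuinely different route from the paper. The paper's proof is purely computational: it differentiates the trivariate pgf $U^*(u,v,w,t)$ of \eqref{pgfdbn} twice in the mixed variables, evaluates $\partial^2 U^*/\partial u\partial v$, $\partial^2 U^*/\partial u\partial w$ and $\partial^2 U^*/\partial v\partial w$ at $(1,1,1)$, and subtracts products of means to obtain each covariance separately --- exactly the ``alternative, entirely computational, route'' you mention in passing. You instead combine three facts the paper has already established: the variances of $D^*(t)$, $B^*(t)$ (preceding proposition) and $N^*(t)$ (Section~\ref{subsec1}), the independence of $D^*(t)$ and $B^*(t)$ (the remark immediately before the statement, from the factorization of the joint pmf), and the pathwise linear relation between the three processes. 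This is shorter, avoids re-differentiating the pgf, and makes the structure of $\Sigma$ (rank-deficiency along the third row/column) transparent. One small bookkeeping point: under the paper's convention the progenitor is counted as a birth, so $B^*(0)=1$ and the identity is $N^*(t)=B^*(t)-D^*(t)$ rather than $1+B^*(t)-D^*(t)$ (this is visible in the constraint $b-d=n$ defining $\Omega^*(k_1,k_2,d,b,n)$ and in $\mathbb{E}(B^*(t))=1+\sum_{i=1}^{k_1}i\lambda_i t$). Since covariances are invariant under additive constants, this does not affect any entry of $\Sigma$, but you should state the identity consistently with the initial condition $p^*(0,1,1,0)=1$.
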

	\begin{proof}
			On taking derivatives of (\ref{pgfdbn}) with respect to $u,\,v$ and $w$, we get
			\begin{align*}
				\frac{\partial^2}{\partial u\partial v}U^*(u,v,w,t)&=we^{-\Lambda t}e^{\left(\sum_{i=1}^{k_1}\lambda_i(vw)^i+\sum_{j=1}^{k_2}\mu_j(uw^{-1})^j\right)t}\left(1+\sum_{i=1}^{k_1}i\lambda_i(vw)^it\right)\sum_{j=1}^{k_2}j\mu_ju^{j-1}w^{-j}t,\\
				\frac{\partial^2}{\partial u\partial w}U^*(u,v,w,t)&=ve^{-\Lambda t}e^{\left(\sum_{i=1}^{k_1}\lambda_i(vw)^i+\sum_{j=1}^{k_2}\mu_j(uw^{-1})^j\right)t}\bigg(\sum_{j=1}^{k_2}j\mu_ju^{j-1}w^{-j}t\bigg(\sum_{i=1}^{k_1}i\lambda_i(vw)^i\\
				&\ \ -\sum_{j=1}^{k_2}j\mu_j(uw^{-1})^j\bigg)t-\sum_{j=1}^{k_2}j(j-1)\mu_ju^{j-1}w^{-j}t\bigg),\\
				\frac{\partial^2}{\partial v\partial w}U^*(u,v,w,t)&=e^{-\Lambda t}e^{\left(\sum_{i=1}^{k_1}\lambda_i(vw)^i+\sum_{j=1}^{k_2}\mu_j(uw^{-1})^j\right)t}\Bigg(\Bigg(1+\sum_{i=1}^{k_1}i(i+1)\lambda_i(vw)^it\Bigg)\\
				&\ \ +\Bigg(1+\sum_{i=1}^{k_1}i\lambda_i(vw)^it\Bigg)\Bigg(\sum_{i=1}^{k_1}i\lambda_i(vw)^i-\sum_{j=1}^{k_2}j\mu_j(uw^{-1})^j\Bigg)t\Bigg).
			\end{align*}
			So,
			\begin{align*}
				\mathrm{Cov}(D^*(t),B^*(t))&=\frac{\partial^2}{\partial u\partial v}U^*(u,v,w,t)\bigg|_{u=1,v=1,w=1}-\mathbb{E}(D^*(t))\mathbb{E}(B^*(t))\\
				&=\left(1+\sum_{i=1}^{k_1}i\lambda_it\right)\sum_{j=1}^{k_2}j\mu_jt-\left(1+\sum_{i=1}^{k_1}i\lambda_it\right)\sum_{j=1}^{k_2}j\mu_jt=0,
			\end{align*}
			\begin{align*}
				\mathrm{Cov}(D^*(t),N^*(t))&=\frac{\partial^2}{\partial u\partial w}U^*(u,v,w,t)\bigg|_{u=1,v=1,w=1}-\mathbb{E}(D^*(t))\mathbb{E}(N^*(t))\\
				&=\sum_{j=1}^{k_2}j\mu_jt\left(\sum_{i=1}^{k_1}i\lambda_i-\sum_{j=1}^{k_2}j\mu_j\right)t-\sum_{j=1}^{k_2}j(j-1)\mu_jt\\
				&\ \  -\sum_{j=1}^{k_2}j\mu_jt\left(1+\left(\sum_{i=1}^{k_1}i\lambda_i-\sum_{j=1}^{k_2}j\mu_j\right)t\right)\\
				&=-\sum_{j=1}^{k_2}j^2\mu_jt,
			\end{align*}
			and
			\begin{align*}
				\mathrm{Cov}(B^*(t),N^*(t))&=\frac{\partial^2}{\partial v\partial w}U^*(u,v,w,t)\bigg|_{u=1,v=1,w=1}-\mathbb{E}(B^*(t))\mathbb{E}(N^*(t))\\
				&=1+\sum_{i=1}^{k_1}i(i+1)\lambda_it+\left(1+\sum_{i=1}^{k_1}i\lambda_it\right)\left(\sum_{i=1}^{k_1}i\lambda_i-\sum_{j=1}^{k_2}j\mu_j\right)t\\
				&\ \ -\left(1+\sum_{i=1}^{k_1}i\lambda_it\right)\left(1+\left(\sum_{i=1}^{k_1}i\lambda_i-\sum_{j=1}^{k_2}j\mu_j\right)t\right)\\
				&=\sum_{i=1}^{k_1}i^2\lambda_it.
			\end{align*}
			This completes the proof.
	\end{proof}

\section{Stochastic integral of GBDP }\label{sec5}
Puri (1966) and Puri (1968) studied the joint distribution of linear birth-death process and its integral. McNeil (1970) obtained the distribution of the integral functional of BDP. Moreover, Gani and McNeil (1971) studied the joint distributions of random variables and their integrals for certain birth-death
and diffusion processes.
Doubleday (1973) has done similar study for the linear birth-death process with multiple births.

 In GBDP, let $\mathcal{N}(0)=k$, that is, there are $k$ individuals present in a population at time $t=0$. The hitting time of $\{\mathcal{N}(t)\}_{t\ge}$ to state $0$ given that $\mathcal{N}(0)=k$ is defined as
  \begin{equation}\label{St}
	Z_k=\mathrm{inf}\{t\ge0:\mathcal{N}(t)=0\}.
\end{equation}
Let 
$
	p_k(t)=\mathrm{Pr}\{\mathcal{N}(t)=0|\mathcal{N}(0)=k\}.
$
Suppose that there is no possibility of immigration, that is, $\lambda_{(0)_i}=0$ for all $i=1,2,\ldots,k_1$. Then,
$
	\mathrm{Pr}\{Z_k\leq s\}=p_k(s),\ s\ge0.
$

Let us consider an infinitesimal time interval of length $h$. Then, we have
\begin{align*}
	p_k(t+h)&=\mathrm{Pr}\{\mathcal{N}(h)=k|\mathcal{N}(0)=k\}p_k(t)+\sum_{i=1}^{k_1}\mathrm{Pr}\{\mathcal{N}(h)=k+i|\mathcal{N}(0)=k\}p_{k+i}(t)\\
	&\ \ +\sum_{j=1}^{k_2}\mathrm{Pr}\{\mathcal{N}(h)=k-j|\mathcal{N}(0)=k\}p_{k-j}(t).
\end{align*}
On using (\ref{gbdp}), we get
\begin{align*}
	p_k(t+h)&=\bigg(1-\sum_{i=1}^{k_1}\lambda_{(k)_i }h+\sum_{j=1}^{k_2}\mu_{(k)_j}h\bigg)p_k(t)+\sum_{i=1}^{k_1}\lambda_{(k)_i}hp_{k+i}(t)+\sum_{j=1}^{k_2}\mu_{(k)_j}hp_{k-j}(t)+o(h),
\end{align*}
which on rearranging the terms and letting $h\to0$ reduces to
\begin{equation}\label{stopping}
	\frac{\mathrm{d}}{\mathrm{d}t}p_k(t)=-\bigg(\sum_{i=1}^{k_1}\lambda_{(k)_i}+\sum_{j=1}^{k_2}\mu_{(k)_j}\bigg)p_k(t)+\sum_{i=1}^{k_1}\lambda_{(k)_i}p_{k+i}(t)+\sum_{j=1}^{k_2}\mu_{(k)_j}p_{k-j}(t).
\end{equation}

Let 
\begin{equation}\label{g}
	\mathcal{W}_k=\int_{0}^{Z_k}g(\mathcal{N}(t))\,\mathrm{d}t,
\end{equation}
where $g(\cdot)$ is a non-negative real valued function on the state-space of GBDP. 
Observe that if $g(u)=1$ then $\mathcal{W}_k=Z_k$ and if $g(u)=u$ then $\mathcal{W}_k$ is the area under $\mathcal{N}(t)$ up to time of first extinction of the process given that it start from state $k$.

Now, let us define
$
	\mathcal{W}_k(\theta)=\mathbb{E}(e^{-\theta \mathcal{W}_k}),\ \theta\ge0.
$ Recall that $T_s$ denotes the total time spent by the process $\{\mathcal{N}(t)\}_{t\ge0}$ in state $s$ before the first transition takes place given that it start from state $s$, and let $B_i$ denotes the event that the first transition is $i$ many births and $D_j$ be the event that the first transition is $j$ many deaths given that we start from $k$. 
Then, we have
\begin{align*}
	\mathcal{W}_k(\theta)&=\sum_{i=1}^{k_1}\mathbb{E}(e^{-\theta \mathcal{W}_k}|B_i)\mathrm{Pr}(B_i)+\sum_{j=1}^{k_2}\mathbb{E}(e^{-\theta \mathcal{W}_k}|D_j)\mathrm{Pr}(D_j)\\
	&=\sum_{i=1}^{k_1}\mathbb{E}(e^{-\theta(\mathcal{W}_{k+i}+T_kg(k))})\mathrm{Pr}(B_i)+\sum_{j=1}^{k_2}\mathbb{E}(e^{-\theta(\mathcal{W}_{k-j}+T_kg(k))})\mathrm{Pr}(D_j)\\
	&=\sum_{i=1}^{k_1}\mathcal{W}_{k+i}(\theta)\mathbb{E}(e^{-\theta T_kg(k)})\mathrm{Pr}(B_i)+\sum_{j=1}^{k_2}\mathcal{W}_{k-j}(\theta)\mathbb{E}(e^{-\theta T_kg(k)})\mathrm{Pr}(D_j),
\end{align*}
where we used strong Markov property to get the last two steps. From the Markovian property of GBDP, we have
$
	\mathrm{Pr}(B_i)={\lambda_{(k)_i}}/{\rho}$ and $  \mathrm{Pr}(D_j)={\mu_{(k)_j}}/{\rho}, 
$ 
where $\rho=\sum_{i=1}^{k_1}\lambda_{(k)_i}+\sum_{j=1}^{k_2}\mu_{(k)_j}$. On using Remark~\ref{exp}, we get
\begin{align*}
	\mathcal{W}_k(\theta)&=\bigg(\sum_{i=1}^{k_1}\mathcal{W}_{k+i}(\theta)\lambda_{(k)_i}+\sum_{j=1}^{k_2}\mathcal{W}_{k-j}(\theta)\mu_{(k)_j}\bigg)\int_{0}^{\infty}e^{-\theta sg(k)}e^{-\rho s}\,\mathrm{d}s\\
&=\bigg(\sum_{i=1}^{k_1}\mathcal{W}_{k+i}(\theta)\lambda_{(k)_i}+\sum_{j=1}^{k_2}\mathcal{W}_{k-j}(\theta)\mu_{(k)_j}\bigg)\frac{1}{\theta g(k)+\rho}.
\end{align*}
Hence,
\begin{equation}\label{laplac}
	\theta\mathcal{W}_k(\theta)=\sum_{i=1}^{k_1}\lambda_{ik}^*\mathcal{W}_{k+i}(\theta)+\sum_{j=1}^{k_2}\mu_{jk}^*\mathcal{W}_{k-j}(\theta)-\bigg(\sum_{i=1}^{k_1}\lambda_{ik}^*+\sum_{j=1}^{k_2}\mu_{jk}^*\bigg)\mathcal{W}_k(\theta)
\end{equation}
with 
$
	\mathcal{W}_0=\int_{0}^{Z_0}g(\mathcal{N}(t))\,\mathrm{d}t=0$ and $ \mathcal{W}_0(\theta)=1.	
$
Here,
$
	\lambda_{ik}^*={\lambda_{(k)_i}}/{g(k)}$ and $\mu_{jk}^*={\mu_{(k)_j}}/{g(k)}.
$

As McNeil (1970) pointed out that Eq. (\ref{laplac}) correspond to finding the Laplace transform of (\ref{stopping}) with parameters $\lambda_{ik}^*$ and $\mu_{jk}^*$ in place of $\lambda_{(k)_i}$ and $\mu_{(k)_j}$, respectively, that is, on taking the Laplace transform of (\ref{stopping}), we get
\begin{equation}
	\theta \mathcal{L}_k(\theta)=\sum_{i=1}^{k_1}\lambda_{(k)_i}\mathcal{L}_{k+i}(\theta)+\sum_{j=1}^{k_2}\mu_{(k)_j}\mathcal{L}_{k-j}(\theta)-\bigg(\sum_{i=1}^{k_1}\lambda_{(k)_i}+\sum_{j=1}^{k_2}\mu_{(k)_j}\bigg)\mathcal{L}_k(\theta),
\end{equation}
where 
\begin{equation*}
	\mathcal{L}_k(\theta)=\int_{0}^{\infty}e^{-\theta t}\bigg(\frac{\mathrm{d}}{\mathrm{d}t}p_k(t)\bigg)\,\mathrm{d}t=\mathbb{E}(e^{-\theta Z_k})
\end{equation*}
with $\mathcal{L}_0(\theta)=1$.
Hence, the Eq. (\ref{laplac}) is nothing but the backward equation for GBDP with scaled parameters $\lambda_{ik}^*$ and $\mu_{jk}^*$ and the distribution of $\mathcal{W}_k$ is known whenever the distribution of $Z_k$ is known.
\begin{remark}
	For $k_1=k_2=1$, (\ref{laplac}) reduces to 
	\begin{equation*}
		\theta\mathcal{W}_k(\theta)=\lambda_{1k}^*\mathcal{W}_{k+1}(\theta)+\mu_{1k}^*\mathcal{W}_{k-1}(\theta)-(\lambda_{1k}^*+\mu_{1k}^*)\mathcal{W}_k(\theta),
	\end{equation*}
with $\mathcal{W}_0(\theta)=1$, which coincide with result obtained in McNeil (1970).
\end{remark} 
\begin{remark}
In the cases of GLBDP, that is, $\lambda_{(k)_i}=k\lambda_i$ and $\mu_{(k)_j}=k\mu_j$, (\ref{laplac}) reduces to
\begin{equation*}
\theta\mathcal{W}_k(\theta)=\sum_{i=1}^{k_1}\tilde{\lambda}_{ik}^*\mathcal{W}_{k+i}(\theta)+\sum_{j=1}^{k_2}\tilde{\mu}_{jk}^*\mathcal{W}_{k-j}(\theta)-\bigg(\sum_{i=1}^{k_1}\tilde{\lambda}_{ik}^*+\sum_{j=1}^{k_2}\tilde{\mu}_{jk}^*\bigg)\mathcal{W}_k(\theta),
\end{equation*}
where
$
	\tilde{\lambda}_{ik}^*={k\lambda_i}/{g(k)}$ and $\tilde{\mu}_{jk}^*={k\mu_j}/{g(k)}.
$
\end{remark}
\subsection{Stochastic path integral of GBDP}
Let us consider the stochastic path integral $	\mathcal{X}(t)=\int_{0}^{t}g(\mathcal{N}(s))\,\mathrm{d}s$ of GBDP, where $g(\cdot)$ is a non-negative real valued function. Then, conditional on the event $\{\mathcal{N}(0)=m\}$, the joint distribution function $q_m(n,x,t)=\mathrm{Pr}\{\mathcal{N}(t)=n, \mathcal{X}(t)\leq x|\mathcal{N}(0)=m\}$, $t\ge0$ of $\mathcal{N}(t)$ and $\mathcal{X}(t)$ solves
\begin{align}
	\frac{\partial}{\partial t}q_m(n,x,t)+g(n)\frac{\partial}{\partial x}q_m(n,x,t)&=-\left(\sum_{i=1}^{k_1}\lambda_{(n)_i}+\sum_{j=1}^{k_2}\mu_{(n)_j}\right)q_m(n,x,t)+\sum_{i=1}^{k_1}\lambda_{(n-i)_i}q_m(n-i,x,t)\nonumber\\
	&\ \ +\sum_{j=1}^{k_2}\mu_{(n+j)_j}q_m(n+j,x,t),\ n\ge0,\label{intgbdp}
\end{align}
where $q_m(n-i,x,t)=0$ for all $i>n$. This follows from the transitions probabilities of GBDP and its Markov property.

\begin{remark}
		Let $\mathcal{T}_k(t)$ be the total time in $(0,t)$ during which the population size is $k$. Then, for $g(x)=x$, the stochastic path integral of GBDP can be written as
	\begin{equation*}
		X(t)=\int_{0}^{t}\mathcal{N}(s)\,\mathrm{d}s=\sum_{k=1}^{\infty}k\mathcal{T}_k(t).
	\end{equation*}
\end{remark}

Next, we consider the stochastic path integral of GLBDP.
\subsubsection{A linear case of GBDP} Here, we obtain some results related to the stochastic path integral of GLBDP. 
Let $X(t)$ be the stochastic path integral of GLBDP defined as \begin{equation*}
	X(t)=\int_{0}^{t}N(s)\,\mathrm{d}t,\ t\ge0.
\end{equation*}
From (\ref{intgbdp}), the system of differential equations that governs the joint distribution $	p_m(n,x,t)=\mathrm{Pr}\{N(t)=n,X(t)<x|N(0)=m\}$, $t\ge0$ of $N(t)$ and $X(t)$ is given by

\begin{align}
	\frac{\partial}{\partial t}p_m(n,x,t)+n\frac{\partial}{\partial x}p_m(n,x,t)&=-n\left(\sum_{i=1}^{k_1}\lambda_i+\sum_{j=1}^{k_2}\mu_j\right)p_m(n,x,t)+\sum_{i=1}^{k_1}(n-i)\lambda_ip_m(n-i,x,t)\nonumber\\
	&\ \ +\sum_{j=1}^{k_2}(n+j)\mu_jp_m(n+j,x,t),\ n\ge0.\label{intls}
\end{align}
Thus, their joint pgf  $\tilde{G}(u,v,t)=\int_{0}^{\infty}\sum_{n=0}^{\infty}u^nv^xp_m(n,x,t)\,\mathrm{d}x$, $|u|\leq1$, $|v|\leq1$ solves
\begin{align*}
\frac{\partial}{\partial t}\tilde{G}(u,v,t)=\left(\sum_{i=1}^{k_1}\lambda_i(u^i-1)+\sum_{j=1}^{k_2}\mu_j(u^{-j}-1)+\ln v\right)u\frac{\partial}{\partial u}\tilde{G}(u,v,t)
\end{align*}
with initial condition $\tilde{G}(u,v,0)=u^m$. Hence, the  cgf
$ \tilde{K}(\theta_u,\theta_v,t)= \ln\tilde{G}(e^{\theta_u},e^{\theta_v},t)$ solves
\begin{equation}\label{Z}
		\frac{\partial}{\partial t}\tilde{K}(\theta_u,\theta_v,t)=\left(\sum_{i=1}^{k_1}\lambda_i(e^{i\theta_u}-1)+\sum_{j=1}^{k_2}\mu_j(e^{-j\theta_u}-1)+\theta_v\right)\frac{\partial}{\partial \theta_u}\tilde{K}(\theta_u,\theta_v,t)
\end{equation}
with $\tilde{K}(\theta_u,\theta_v,0)=m\theta_u.$

Next, we obtain the mean and variance of the stochastic path integral of GLBDP and its covariance with $N(t)$. 
\begin{proposition}\label{prop6}
For $m=1$ and $t\ge0$, we have\\
\noindent (i) $
		\mathbb{E}(X(t))=\begin{cases}
			\frac{(e^{\eta t}-1)}{\eta},\ \eta\ne0,\vspace{0.1cm}\\
			t,\ \eta=0,
		\end{cases}
	$\\
	\vspace{0.1cm}
	\noindent (ii) $\mathrm{Cov}(N(t),X(t))=\begin{cases}
		\frac{\zeta}{\eta^2}(e^{2\eta t}-e^{\eta t}-\eta te^{\eta t}),\ \eta\ne0,\vspace{0.1cm}\\
	\zeta\frac{t^2}{2},\ \eta=0,
	\end{cases},$\\
	\vspace{0.1cm}
	\noindent (iii) $	\mathrm{Var}(X(t))=\begin{cases}
		\frac{2\zeta}{\eta^2}\left(\frac{e^{2\eta t}-1}{2\eta}-te^{\eta t}\right),\ \eta\ne0,\vspace{0.1cm}\\
	\zeta\frac{t^3}{3},\ \eta=0.
	\end{cases}$
\end{proposition}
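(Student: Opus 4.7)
My plan is to adapt the coefficient-comparison approach used for Proposition~\ref{prop3.3} to the joint cgf PDE (\ref{Z}). Since $N(0)=m=1$ and $X(0)=0$, the initial condition reads $\tilde{K}(\theta_u,\theta_v,0)=\theta_u$, and I expand
\begin{equation*}
\tilde{K}(\theta_u,\theta_v,t)=\theta_u\mathbb{E}(N(t))+\theta_v\mathbb{E}(X(t))+\frac{\theta_u^2}{2}\mathrm{Var}(N(t))+\theta_u\theta_v\mathrm{Cov}(N(t),X(t))+\frac{\theta_v^2}{2}\mathrm{Var}(X(t))+\cdots
\end{equation*}
while separately expanding the bracket on the right-hand side of (\ref{Z}) as
\begin{equation*}
\sum_{i=1}^{k_1}\lambda_i(e^{i\theta_u}-1)+\sum_{j=1}^{k_2}\mu_j(e^{-j\theta_u}-1)+\theta_v=\theta_v+\eta\theta_u+\frac{\zeta}{2}\theta_u^2+O(\theta_u^3),
\end{equation*}
with $\eta$ and $\zeta$ as in Proposition~\ref{prop3.3}.

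Substituting both expansions into (\ref{Z}) and matching the coefficients of $\theta_v$, $\theta_u\theta_v$ and $\theta_v^2$ should yield the triangular system
\begin{align*}
\frac{d}{dt}\mathbb{E}(X(t))&=\mathbb{E}(N(t)),\\
\frac{d}{dt}\mathrm{Cov}(N(t),X(t))&=\eta\,\mathrm{Cov}(N(t),X(t))+\mathrm{Var}(N(t)),\\
\frac{d}{dt}\mathrm{Var}(X(t))&=2\,\mathrm{Cov}(N(t),X(t)),
\end{align*}
all with zero initial value. I would then feed the known expressions for $\mathbb{E}(N(t))$ and $\mathrm{Var}(N(t))$ from Proposition~\ref{prop3.3} into this cascade and solve it stepwise. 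Part (i) is a single integration of $e^{\eta t}$ (or of $1$ when $\eta=0$). Part (ii) is a first-order linear ODE whose forcing term is proportional to $(e^{2\eta t}-e^{\eta t})/\eta$; multiplying through by the integrating factor $e^{-\eta t}$, integrating, and using $\mathrm{Cov}(N(0),X(0))=0$ should produce the claimed closed form. Part (iii) then follows by integrating $2\,\mathrm{Cov}(N(t),X(t))$ over $[0,t]$, after which the $e^{2\eta t}$ and $e^{\eta t}$ contributions combine and an integration by parts on the $\eta te^{\eta t}$ piece produces the stated formula.

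The case $\eta=0$ is handled in parallel: the ODEs decouple (they become $\dot{\mathbb{E}}(X)=1$, $\dot{\mathrm{Cov}}=\zeta t$, $\dot{\mathrm{Var}}(X)=2\,\mathrm{Cov}$) and integrate to the stated monomials in $t$. The main obstacle I anticipate is the bookkeeping for the covariance ODE: the forcing carries both $e^{2\eta t}$ and $e^{\eta t}$ pieces whose interaction with the integrating factor generates the extra $\eta te^{\eta t}$ term, and this same term then propagates through another integration by parts into the variance formula, so keeping precise track of signs and powers of $\eta$ in the denominator is the most error-prone step. Justifying the termwise matching of the cgf expansion is routine provided the second moment of $X(t)$ is finite, which is guaranteed by the exponential bounds on $\mathbb{E}(N(t))$ established earlier.
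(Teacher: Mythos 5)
Your proposal follows exactly the paper's own route: substitute the cumulant expansion (\ref{cgfequ}) into the cgf equation (\ref{Z}), compare coefficients of $\theta_v$, $\theta_u\theta_v$ and $\theta_v^2$ to obtain the same triangular system $\frac{\mathrm{d}}{\mathrm{d}t}\mathbb{E}(X(t))=\mathbb{E}(N(t))$, $\frac{\mathrm{d}}{\mathrm{d}t}\mathrm{Cov}(N(t),X(t))=\mathrm{Var}(N(t))+\eta\,\mathrm{Cov}(N(t),X(t))$, $\frac{\mathrm{d}}{\mathrm{d}t}\mathrm{Var}(X(t))=2\,\mathrm{Cov}(N(t),X(t))$, and solve it with zero initial data. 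The integrations you outline do produce the stated closed forms, so the proposal is correct and essentially identical to the paper's proof.
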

\begin{proof}
	On using (\ref{cgfequ}) in (\ref{Z}), we have
	\begin{align}
		\frac{\partial}{\partial t}(\theta_u\mathbb{E}(N(t))&+\theta_v\mathbb{E}(X(t))+\frac{\theta_u^2}{2}\mathrm{Var}(N(t))+\frac{\theta_v^2}{2}\mathrm{Var}(X(t))+\theta_u\theta_v\mathrm{Cov}(N(t),X(t))+\dots)\nonumber\\
		&-\theta_v\sum_{l=0}^{\infty}\frac{(-\theta_u)^l}{l!}(\mathbb{E}(N(t))+\theta_u\mathrm{Var}(N(t))+\theta_v\mathrm{Cov}(N(t),X(t))+\dots)\nonumber\\
		&=\sum_{l=1}^{\infty}\left(\sum_{i=1}^{k_1}\lambda_i\frac{(i\theta_u)^l}{l!}+\sum_{j=1}^{k_2}\mu_j\frac{(-j\theta_u)^l}{l!}\right).\label{**}
	\end{align}
	On comparing the coefficients on both sides of (\ref{**}), we get
\begin{align*}
	\frac{\mathrm{d}}{\mathrm{d}t}\mathbb{E}(X(t))&=\mathbb{E}(N(t)),\\
	\frac{\mathrm{d}}{\mathrm{d}t}\mathrm{Cov}(N(t),X(t))&=\mathrm{Var}(N(t))+\eta\mathrm{Cov}(N(t),X(t))
\end{align*}
and 
\begin{equation*}
	\frac{\mathrm{d}}{\mathrm{d}t}\mathrm{Var}(X(t))=2\mathrm{Cov}(N(t),X(t)).
\end{equation*}
On solving the above differential equations with initial conditions $\mathbb{E}(X(0))=0$, $\mathrm{Cov}(N(0),X(0))=0$ and $\mathrm{Var}(X(0))=0$, respectively, we get the required results.
\end{proof}
\begin{remark}
	The limiting behavior of the mean of $X(t)$ is given as follows:
	\begin{equation*}
		\lim_{t\to\infty}\mathbb{E}(X(t))=\begin{cases}
			-\frac{1}{\eta},\ \eta<0,\vspace{0.1cm}\\
			\infty,\ \eta\ge0.
		\end{cases}
	\end{equation*}
\end{remark}
The correlation coefficient $R_X(1)=\mathrm{Corr}(N(1),X(1))$ for different values of parameter are illustrated in Figure~\ref{fig3}.
\begin{figure}[htp]
	\includegraphics[width=13cm]{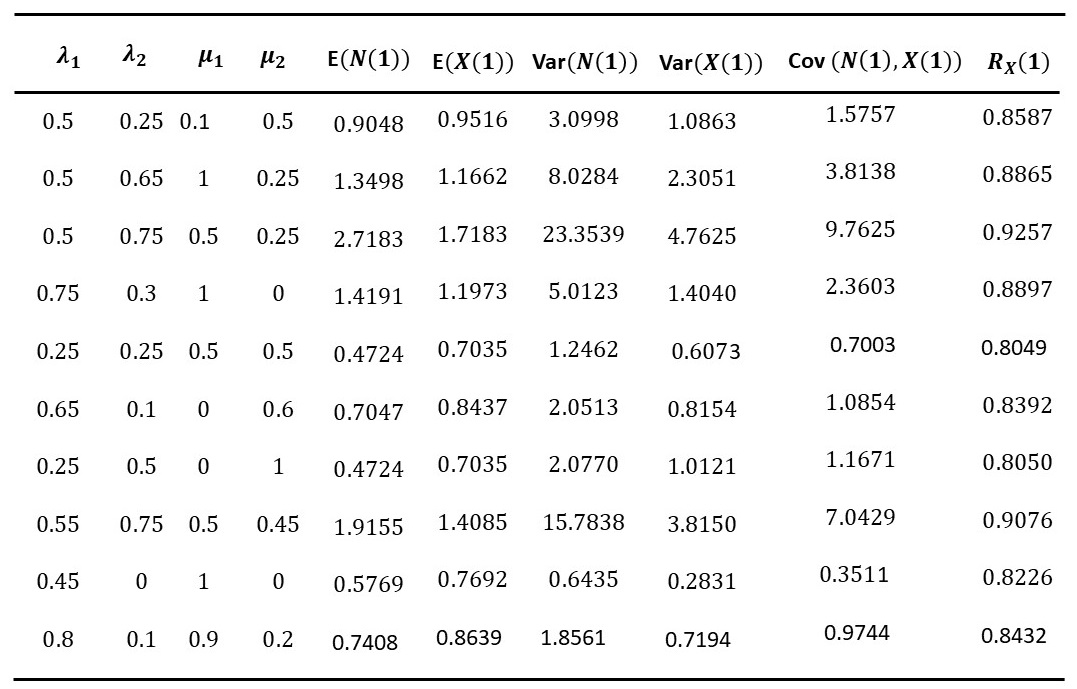}
	\caption[Figure 3]{\small Values of stochastic path integral of GLBDP at $t=1$.}\label{fig3}
\end{figure}

 The variation of correlation coefficient between $N(t)$ and its stochastic path integral with time is shown in 
 Figure~\ref{fig6}. For $\eta>0$, $N(t)$ and $X(t)$ have strong positive correlation and as $t$ increases it converges to $+1$. For $\eta<0$, the correlation is positive but it is getting weaker with time.
\begin{figure}[htp]
	\includegraphics[width=16cm]{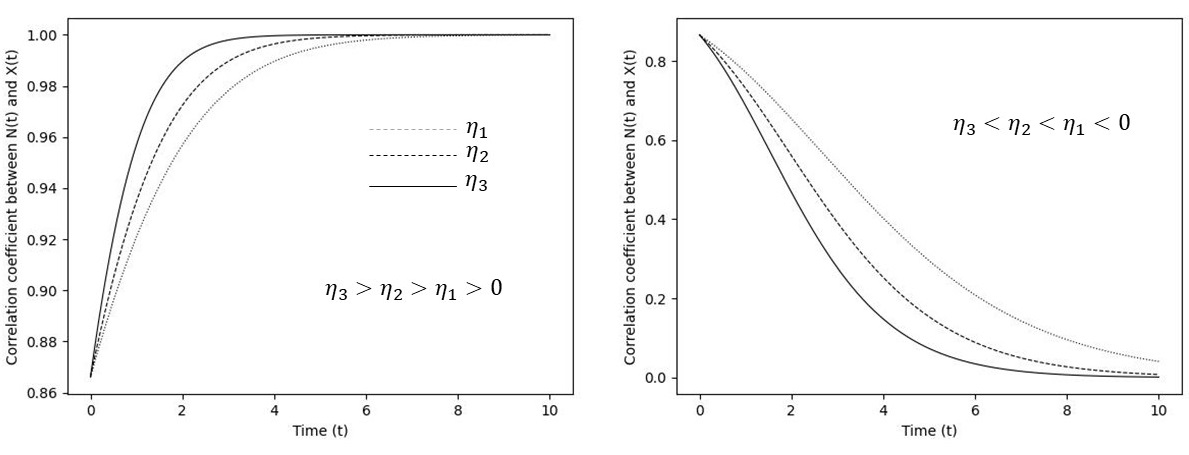}
	\caption[Figure 4]{Correlation plot of $N(t)$ and $X(t)$ for $k_1=k_2=2$.}\label{fig6}
\end{figure}
\subsection{Stochastic path integral of GBDP in the case of constant birth and death rates} Let $X^*(t)=\int_{0}^{t}N^*(s)\,\mathrm{d}s$ be the stochastic path integral  of the process $\{N^*(t)\}_{t\ge0}$ defined in Section~\ref{subsec1}. From (\ref{intgbdp}), the joint distribution $p^*_1(n,x,t)=\mathrm{Pr}\{N^*(t)=n,X^*(t)\leq x|N^*(0)=1\}$ solves
\begin{align}
	\frac{\partial}{\partial t}p_1^*(n,x,t)+n\frac{\partial}{\partial x}p_1^*(n,x,t)&=-\left(\sum_{i=1}^{k_1}\lambda_i+\sum_{j=1}^{k_2}\mu_j\right)p_1^*(n,x,t)+\sum_{i=1}^{k_1}\lambda_ip_1^*(n-i,x,t)\nonumber\\
	&\ \ +\sum_{j=1}^{k_2}\mu_jp_1^*(n+j,x,t),\ n\ge0\label{ge1}
\end{align}
with 
$p_1^*(1,0,0)=1$,
where $p^*(n,x,t)=0$ for all $n<0$.

The pgf $\tilde{G}^*(u,v,t)=\int_{0}^{\infty}\sum_{n=0}^{\infty}u^nv^x{p}_1^*(n,x,t)\,\mathrm{d}x$, $0<u<1,\ 0<v<1$ of $(N^*(t),X^*(t))$ solves the following differential equation:
\begin{equation}\label{intcon}
	\frac{\partial}{\partial t}\tilde{G}^*(u,v,t)-\ln v\frac{\partial}{\partial u}\tilde{G}^*(u,v,t)=\left(\sum_{i=1}^{k_1}\lambda_i(u^i-1)+\sum_{j=1}^{k_2}\mu_j(u^{-j}-1)\right)\tilde{G}^*(u,v,t)
\end{equation}
with initial condition $\tilde{G}^*(u,v,0)=u$. And, its cgf $\tilde{K}^*(\theta_u,\theta_v,t)=\ln\tilde{G}^*(e^{\theta_u},e^{\theta_v},t) $ solves 
\begin{equation}
	\frac{\partial}{\partial t}\tilde{K}^*(\theta_u,\theta_v,t)-\theta_ve^{-\theta_u}\frac{\partial}{\partial\theta_u}\tilde{K}^*(\theta_u,\theta_v,t)=\sum_{i=1}^{k_1}\lambda_i(e^{i\theta_u}-1)+\sum_{j=1}^{k_2}\mu_j(e^{-j\theta_u}-1)
\end{equation}
with $\tilde{K}^*(\theta_u,\theta_v,0)=\theta_u.$ On following the similar lines as in the proof of Proposition~\ref{prop6}, we get 
\begin{align*}
	\mathbb{E}(X^*(t))&=t+\frac{\eta t^2}{2},\\
	\mathrm{Cov}(N^*(t),X^*(t))&=t+\left(\eta-\sum_{i=1}^{k_1}i^2\lambda_i+\sum_{j=1}^{k_2}j^2\mu_j\right)\frac{t^2}{2}
\end{align*} 
and 
\begin{equation*}
	\mathrm{Var}(X^*(t))=t^2+\left(\eta-\sum_{i=1}^{k_1}i^2\lambda_i+\sum_{j=1}^{k_2}j^2\mu_j\right)\frac{t^3}{3}.
\end{equation*}

Next, we solve differential equation  (\ref{intcon}) to obtain the joint pgf of $N^*(t)$ and $X^*(t)$.

The subsidiary equation corresponding to  (\ref{intcon}) is as follows:  
	\begin{equation}\label{sub3}
		\frac{\partial t}{1}=\frac{\partial u}{-\ln v}=\frac{1}{\sum_{i=1}^{k_1}\lambda_i(u^i-1)+\sum_{j=1}^{k_2}\mu_j(u^{-j}-1)}\frac{\partial \tilde{G}^*}{\tilde{G}^*(u,v,t)}.
	\end{equation}
So, we get
	\begin{equation*}
		t\ln v  +u=\text{constant}
	\end{equation*}
	and 
	\begin{equation*}
		u\left(\sum_{i=1}^{k_1}\lambda_i\left(\frac{u^i}{i+1}-1\right)+\sum_{j=1}^{k_2}\mu_j\left(\frac{u^{-j}}{1-j}-1\right)\right)+\ln v\ln \tilde{G}^*(u,v,t)=\text{constant}.
	\end{equation*}
	Thus, the general solution of (\ref{intcon}) is given by
	\begin{equation*}
		u\left(\sum_{i=1}^{k_1}\lambda_i\left(\frac{u^i}{i+1}-1\right)+\sum_{j=1}^{k_2}\mu_j\left(\frac{u^{-j}}{1-j}-1\right)\right)+\ln v\ln \tilde{G}^*(u,v,t)=\Phi(t\ln v  +u),
	\end{equation*}
	where $\Phi$ is a real valued function. By
	using the initial condition $\tilde{G}^*(u,v,0)=u$, we obtain
	\begin{equation*}
		\Phi(u)=u\left(\sum_{i=1}^{k_1}\lambda_i\left(\frac{u^i}{i+1}-1\right)+\sum_{j=1}^{k_2}\mu_j\left(\frac{u^{-j}}{1-j}-1\right)\right)+\ln v\ln u.
	\end{equation*}
	Thus,
	\begin{equation*}
		\tilde{G}^*(u,v,t)=\exp\left(\frac{\Phi(t\ln v  +u)}{\ln v}-\frac{u}{\ln v}\left(\sum_{i=1}^{k_1}\lambda_i\left(\frac{u^i}{i+1}-1\right)+\sum_{j=1}^{k_2}\mu_j\left(\frac{u^{-j}}{1-j}-1\right)\right)\right).
	\end{equation*}

\section{Immigration effect in GLBDP}\label{sec6}	
In GLBDP, the state $n=0$ is the absorbing state of the process, that is, once the population get extinct it can not be revive again. Here, we consider a possibility that the process can revive again once it reaches state $n=0$. To capture the immigration effect in GLBDP, we introduce a linear version of GBDP, namely, the generalized linear birth-death process with immigration (GLBDPwI).

 Let $\nu>0$ be the rate of population growth due to immigration. Here, we consider two different cases of immigration as follows:
\subsection{Immigration effect at state $n=0$ only}
In this case, the immigration happens only when no alive individual is present in the population. So, the immigration rate is  $\lambda_{(0)_i}=\nu$, and once the immigration happens, the birth and death  rates are $\lambda_{(n)_{i}}=n\lambda_i$, $i\in\{1,2,\dots,k_1\}$ and $\mu_{(n)_j}$, $j\in\{1,2,\dots,k_2\}$, respectively, for all $n\ge1$. From (\ref{tranprob}), the state probabilities $p(n,t)$, $n\ge0$ of GLBDPwI solve the following system of differential equations:
\begin{align*}
	\frac{\mathrm{d}}{\mathrm{d}t}p(0,t)&=-k_1\nu p(0,t)+\sum_{j=1}^{k_2}j\mu_jp(j,t),\\
	\frac{\mathrm{d}}{\mathrm{d}t}p(n,t)&=-\left(\sum_{i=1}^{k_1}n\lambda_i+\sum_{j=1}^{k_2}n\mu_j\right)p(n,t)+\nu p(0,t)+\sum_{i=1}^{k_1}(n-i)\lambda_ip(n-i,t)\\
    &\ \ +\sum_{j=1}^{k_2}(n+j)\mu_jp(n+j,t),\ 1\leq n\leq k_1
\end{align*}
and 
\begin{equation*}
	\frac{\mathrm{d}}{\mathrm{d}t}p(n,t)=-\left(\sum_{i=1}^{k_1}n\lambda_i+\sum_{j=1}^{k_2}n\mu_j\right)p(n,t)+\sum_{i=1}^{k_1}(n-i)\lambda_ip(n-i,t)+\sum_{j=1}^{k_2}(n+j)\mu_jp(n+j,t),\ n>k_1,
\end{equation*}
with $p(1,0)=1$ and $p(n,t)=0$ for all $n\ne1$. Here, $p(n,t)=0$ for all $n<0$. Its pgf $\mathcal{H}(u,t)=\sum_{n=0}^{\infty}u^np(n,t)$ solves
\begin{equation}\label{0imi}
	\frac{\partial}{\partial t}\mathcal{H}(u,t)=\left(\sum_{i=1}^{k_1}\lambda_iu(u^i-1)+\sum_{j=1}^{k_2}\mu_ju(u^{-j}-1)\right)\frac{\partial}{\partial u}\mathcal{H}(u,t)+\sum_{i=1}^{k_1}\nu(u^i-1)p(0,t)
\end{equation}
with $\mathcal{H}(u,0)=u$.

On taking the derivative of (\ref{0imi}) with respect to $u$, we get
\begin{align}
	\frac{\partial^2}{\partial u\partial t}\mathcal{H}(u,t)&=\left(\sum_{i=1}^{k_1}\lambda_iu(u^i-1)+\sum_{j=1}^{k_2}\mu_ju(u^{-j}-1)\right)\frac{\partial^2}{\partial u^2}\mathcal{H}(u,t)\nonumber\\
	&\ \ +\left(\sum_{i=1}^{k_1}\lambda_iu((i+1)u^i-1)+\sum_{j=1}^{k_2}\mu_j((1-j)u^{-j}-1)\right)\frac{\partial}{\partial u}\mathcal{H}(u,t)+\sum_{i=1}^{k_1}i\nu u^{i-1}p(0,t).\label{7.2}
\end{align}
On taking $u=1$ in (\ref{7.2}), we get the following differential equation that governs the mean $\mathscr{M}(t)=\sum_{n=0}^{\infty}np(n,t)$ of GLBDPwI:
\begin{equation*}
	\frac{\mathrm{d}}{\mathrm{d}t}\mathscr{M}(t)=\eta \mathscr{M}(t)+\frac{k_1(k_1+1)}{2}\nu p(0,t)
\end{equation*}
with $\mathscr{M}(0)=1$, where $\eta=\sum_{i=1}^{k_1}i\lambda_i-\sum_{j=1}^{k_2}j\mu_j.$ Thus,
\begin{equation*}
	\mathscr{M}(t)=e^{\eta t}\left(1+\nu\frac{k_1(k_1+1)}{2}\int_{0}^{t}e^{-\eta s}p(0,s)\,\mathrm{d}s\right).
\end{equation*}
\subsection{Immigration effect at any state}
In this case, the immigration effect is always present in the population at a constant rate. So, the birth and death rates are $\lambda_{(n)_i}=n\lambda_i+\nu$, $i\in\{1,2,\dots,k_1\}$ and $\mu_{(n)_j}$, $j\in\{1,2,\dots,k_2\}$, respectively, for all $n\ge0$. From (\ref{tranprob}), the state probabilities $p(n,t)$ of GLBDPwI solve
\begin{align*}
	\frac{\mathrm{d}}{\mathrm{d}t}p(n,t)&=-\left(k_1\nu+n\sum_{i=1}^{k_1}\lambda_i+n\sum_{j=1}^{k_2}\mu_j\right)p(n,t)+\sum_{i=1}^{k_1}(\nu+(n-i)\lambda_i)p(n-i,t)\\
	&\ \ +\sum_{j=1}^{k_2}(n+j)\mu_jp(n+j,t),\ n\ge0,
\end{align*}
where $p(1,0)=1$ and $p(n,t)=0$ for all $n\ne1$. So, the differential equation that governs the pgf $\mathcal{H}(u,t)=\sum_{n=0}^{\infty}u^np(n,t)$ of GLBDPwI is given by
\begin{equation}\label{imipgf}
	\frac{\partial}{\partial t}\mathcal{H}(u,t)=\left(\sum_{i=1}^{k_1}\lambda_iu(u^i-1)+\sum_{j=1}^{k_2}\mu_ju(u^{-j}-1)\right)\frac{\partial}{\partial u}\mathcal{H}(u,t)+\left(\sum_{i=1}^{k_1}\nu(u^i-1)\right)\mathcal{H}(u,t)
\end{equation}
with $\mathcal{H}(u,0)=u$.
 
On taking the derivative with respect to $u$ on both sides of (\ref{imipgf}), we have
\begin{align}
	\frac{\partial}{\partial t}\frac{\partial}{\partial u}\mathcal{H}(u,t)&=\left(\sum_{i=1}^{k_1}\lambda_iu(u^i-1)+\sum_{j=1}^{k_2}\mu_ju(u^{-j}-1)\right)\frac{\partial^2}{\partial u^2}\mathcal{H}(u,t)\nonumber\\
	&\ \ +\left(\sum_{i=1}^{k_1}\lambda_i((i+1)u^i-1)+\sum_{j=1}^{k_2}\mu_j((-j+1)u^{-j}-1)\right)\frac{\partial}{\partial u}\mathcal{H}(u,t)+\sum_{i=1}^{k_1}i\nu u^{i-1}\mathcal{H}(u,t)\nonumber\\
	&\ \ +\sum_{i=1}^{k_1}\nu(u^i-1)\frac{\partial}{\partial u}\mathcal{H}(u,t).\label{imimean}
\end{align}
On taking $u=1$ in (\ref{imimean}), we get the mean $\mathscr{M}(t)$ of GLBDPwI as a solution of the following differential equation:
\begin{equation*}
	\frac{\partial}{\partial t}\mathscr{M}(t)=\eta \mathscr{M}(t)+\frac{k_1(k_1+1)\nu}{2}
\end{equation*}
with initial condition $\mathscr{M}(0)=1$. It is given by
\begin{equation*}
	\mathscr{M}(t)=\begin{cases}
		e^{\eta t}+\frac{k_1(k_1+1)\nu}{2\eta}(e^{\eta t}-1),\ \eta\ne0,\vspace{0.1cm}\\
		1+\frac{k_1(k_1+1)\nu t}{2},\ \eta=0,
	\end{cases}
\end{equation*}
where $\eta=\sum_{i=1}^{k_1}i\lambda_i-\sum_{j=1}^{k_2}j\mu_j$.
\begin{remark}
	For $\nu=0$, the mean of GLBDPwI  reduces to the mean of GLBDP given in (\ref{meanvar}).
\end{remark}

For different values of $\eta$ the variation of expected values of GLBDP and GLBDPwI with time is shown in Figure~\ref{fig7}.
\begin{figure}[htp]
	\includegraphics[width=16cm]{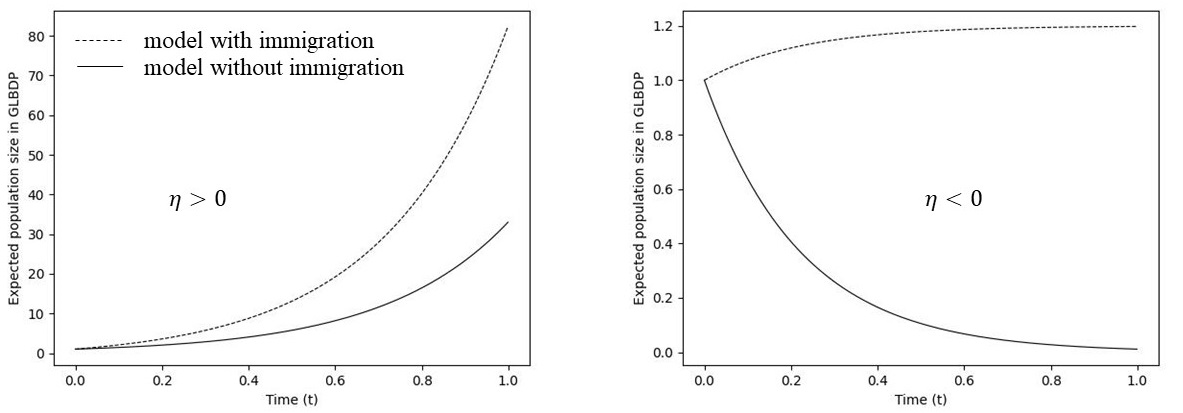}
	\caption[Figure 7]{Expected population size in GLBDP and GLBDPwI for $k_1=k_2=2$.}\label{fig7}
\end{figure}
In GLBDPwI, for $\eta>0$, the population size grow exponentially fast for the large value of $t$ and for $\eta<0$, the limiting mean value for large $t$ is $-k_1(k_1+1)\nu/2\eta$, that is,
\begin{equation*}
	\lim_{t\to\infty}\mathscr{M}(t)=\begin{cases}
		\infty,\ \eta\ge0,\vspace{0.1cm}\\
		-\frac{k_1(k_1+1)\nu}{2\eta},\ \eta<0.
	\end{cases}
\end{equation*}
Thus, as expected in GLBDPwI, the population will never go extinct.
 \begin{remark}
	On comparing (\ref{pgfequ}) and (\ref{imipgf}), we note that they differs in the term $\sum_{i=1}^{k_1}\nu(u^i-1)\mathcal{H}(u,t)$. It is a  generalized counting process component with constant transition rates. Suppose that the birth rates in GLBDPwI tend to zero, that is, $\lambda_i\to0$ for all $i\in\{1,2,\dots,k_1\}$ then it can be used to study the fluctuation in the number of particles enclosed inside very small volume. Here, the immigration of particle from surrounding happens as GCP with constant rates and the emigration of finitely many particles from closed volume is considered as death whose rates are  linearly proportional to the number of particle present. 
\end{remark}
\section{Application to vehicles parking management system}
Suppose a vehicles parking lot has maximum capacity of $K$ parking spots. If we assume that the gate of the parking area is wide enough such that multiple vehicles can arrive or depart together and the chance of simultaneous arrival and departure of the vehicles is negligible.  Let $N(t)$ denotes the total number of vehicles parked at time $t$. Then, we can use a linear version of GBDP to model this problem, where $\{N(t)\}_{t\ge0}$ is a GBDP with parameters
\begin{equation*}
	\lambda_{(n)_i}=(K-n)\lambda_i,\ i\in\{1,2,\ldots,K_1\}\ \text{and}\  \mu_{(n)_j}=n\mu_j,\ j\in \{1,2,\ldots,K_2\}
\end{equation*}
for all $0\leq n\leq K$, $K_1< K$ and $K_2<K$.
Thus, the rates of arrival and departure of vehicles are proportional to the number of vacant spots and the total number of vehicles parked in the parking lot, respectively. Let $A(t)$ and $D(t)$ denote the number of vehicles arrived  and departed by time $t$. 
Here, we are interested in knowing the quantities $\mathbb{E}(N(t))$, $\mathbb{E}(A(t))$ and $\mathbb{E}(D(t))$. Let us assume that there are no vehicles parked at time $t=0$. As obtained in the case of GLBDP, it can be shown that the pmf $p(n,t)=\mathrm{Pr}\{N(t)=n\}$ solves the following system of differential equations:
\begin{align*}
	\frac{\mathrm{d}}{\mathrm{d}t}p(n,t)&=-\bigg(\sum_{i=1}^{K_1}(K-n)\lambda_i+\sum_{j=1}^{K_2}n\mu_j\bigg)p(n,t)+\sum_{i=1}^{K_1}(K-n+i)\lambda_ip(n-i,t)\\
	&\ \ +\sum_{j=1}^{K_2}(n+j)\mu_jp(n+j,t),\ 0\leq n\leq K,
\end{align*}
with initial condition $p(0,0)=1$.
Here, $p(n,t)=0$ for all $n<0$ and $n>K$.
Hence, the governing system of differential equation for its mean is given by
\begin{equation}\label{apl1}
	\frac{\mathrm{d}}{\mathrm{d}t}\mathbb{E}(N(t))=-\left(\sum_{i=1}^{K_1}i\lambda_i+\sum_{j=1}^{K_2}j\mu_j\right)\mathbb{E}(N(t))+K\sum_{i=1}^{K_1}i\lambda_i
\end{equation} 
with initial condition $\mathbb{E}(N(0))=0$. On solving (\ref{apl1}), we get
\begin{equation*}
	\mathbb{E}(N(t))=\frac{K\sum_{i=1}^{K_1}i\lambda_i}{\beta}\left(1-e^{-\beta t}\right),
\end{equation*}
where $\beta=\sum_{i=1}^{K_1}i\lambda_i+\sum_{j=1}^{K_2}j\mu_j$. In the long run, the expected number of vehicles parked is given by
\begin{equation*}
	\lim_{t\to\infty}\mathbb{E}(N(t))=\frac{K\sum_{i=1}^{K_1}i\lambda_i}{\beta}<K.
\end{equation*}

The joint pmf $p(a,n,t)=\mathrm{Pr}\{A(t)=a,N(t)=n\}$ solves
\begin{align*}
	\frac{\mathrm{d}}{\mathrm{d}t}p(a,n,t)&=-\left(\sum_{i=1}^{K_1}(K-n)\lambda_i+\sum_{j=1}^{K_2}n\mu_j\right)p(a,n,t)+\sum_{i=1}^{K_1}(K-n+i)\lambda_ip(a-i,n-i,t)\\
	&\ \ +\sum_{j=1}^{K_2}(n+j)\mu_jp(a,n+j,t),\ a,n\in\{0,1,\dots,K\}
\end{align*}
with $p(0,0,0)=1$. Its proof follows similar lines to that of Proposition~\ref{prop3.1}.

The joint cgf $K(\theta_u,\theta_v,t)=\ln \mathbb{E}(e^{\theta_uA(t)+\theta_vN(t)})$ satisfies the following differential equation:
\begin{equation*}
	\frac{\partial}{\partial t}K(\theta_u,\theta_v,t)+\left(\sum_{i=1}^{K_1}\lambda_i(e^{i(\theta_u+\theta_v)}-1)-\sum_{j=1}^{K_2}\mu_j(e^{-j\theta_v}-1)\right)\frac{\partial}{\partial \theta_v}K(\theta_u,\theta_v,t)=K\sum_{i=1}^{K_1}\lambda_i(e^{i(\theta_u+\theta_v)}-1)
\end{equation*}
with initial condition $K(\theta_u,\theta_v,0)=0$. On using (\ref{cgfequ}), we get
\begin{equation*}
	\frac{\mathrm{d}}{\mathrm{d}t}\mathbb{E}(A(t))=\sum_{i=1}^{K_1}i\lambda_i(K-\mathbb{E}(N(t)))
\end{equation*}
with $\mathbb{E}(A(0))=0$. Thus,
\begin{equation*}
	\mathbb{E}(A(t))=\sum_{i=1}^{K_1}i\lambda_i\left(Kt-\frac{K\sum_{i=1}^{K_1}i\lambda_i}{\beta}\bigg(t+\frac{e^{-t\beta}-1}{\beta}\bigg)\right).
\end{equation*}

Similarly, the joint pmf $q(d,n,t)=\mathrm{Pr}\{D(t)=d,N(t)=n\}$ is the solution of following system of differential equations:
\begin{align}
	\frac{\mathrm{d}}{\mathrm{d}t}q(d,n,t)&=-\left(\sum_{i=1}^{K_1}(K-n)\lambda_i+\sum_{j=1}^{K_2}n\mu_j\right)q(d,n,t)+\sum_{i=1}^{K_1}(K-n+i)\lambda_iq(d,n-i,t)\nonumber\\
	&\ \ +\sum_{j=1}^{K_2}(n+j)\mu_jq(d-j,n+j,t),\ d,n\in\{0,1,\dots,K\}.\nonumber
\end{align}
Hence, the cgf $\mathcal{K}(\theta_u,\theta_v,t)=\ln \mathbb{E}(e^{\theta_uD(t)+\theta_vN(t)})$ solves
\begin{equation*}
	\frac{\partial}{\partial t}	\mathcal{K}(\theta_u,\theta_v,t)+\left(\sum_{i=1}^{K_1}\lambda_i(e^{i\theta_v}-1)+\sum_{j=1}^{K_2}\mu_j(e^{j(\theta_u-\theta_v)}-1)\right)\frac{\partial}{\partial \theta_v}\mathcal{K}(\theta_u,\theta_v,t)=K\sum_{i=1}^{K_1}\lambda_i(e^{i\theta_v}-1)
\end{equation*}
with $\mathcal{K}(\theta_u,\theta_v,0)=0$. Thus, we get
\begin{equation*}
	\mathbb{E}(D(t))=K\sum_{i=1}^{K_1}i\lambda_i\sum_{j=1}^{K_2}j\mu_j\left(\frac{t}{\beta}+\frac{e^{-\beta t}-1}{\beta^2}\right).
\end{equation*}

Let us consider the stochastic path integral $X(t)=\int_{0}^{t}N(s)\,\mathrm{d}s$. Then, we can find the average occupancy $O(t)$ of the parking lot at any given time $t\ge0$ which we defined as $	O(t)=t^{-1}\mathbb{E}(X(t))$.

From (\ref{intgbdp}), the system of differential equations that governs the joint distribution $p_0(n,x,t)$ is given by
\begin{align*}
	\frac{\partial}{\partial t}p_0(n,x,t)+n\frac{\partial}{\partial x}p_0(n,x,t)&=-\left(\sum_{i=1}^{K_1}(K-n)\lambda_i+\sum_{j=1}^{K_2}n\mu_j\right)p_0(n,x,t)\\
	&\ \ +\sum_{i=1}^{K_1}(K-n+i)\lambda_ip_0(n-i,x,t)\\
	&\ \ +\sum_{j=1}^{K_2}(n+j)\mu_jp_0(n+j,x,t),\ 0\leq n\leq K,
\end{align*}
where $p_0(n,x,t)=0$ for all $n<0$ and $n>K$.

So, the joint cgf $\tilde{K}(\theta_u,\theta_v,t)=\ln \mathbb{E}(e^{\theta_uN(t)+\theta_vX(t)})$ solves
\begin{equation}\label{***}
	\frac{\partial}{\partial t}\tilde{K}(\theta_u,\theta_v,t)+\left(\sum_{i=1}^{K_1}\lambda_i(e^{i\theta_u}-1)-\sum_{j=1}^{K_2}\mu_j(e^{-j\theta_u}-1)-\theta_v\right)\frac{\partial}{\partial\theta_u}\tilde{K}(\theta_u,\theta_v,t)=K\sum_{i=1}^{K_1}\lambda_i(e^{i\theta_u}-1)
\end{equation}
with $\tilde{K}(\theta_u,\theta_v,0)=0$.

On using (\ref{cgfequ}), we get the following differential equation:
\begin{equation*}
	\frac{\mathrm{d}}{\mathrm{d}t}\mathbb{E}(X(t))=\mathbb{E}(N(t))
\end{equation*}
with $\mathbb{E}(X(0))=0$. Hence, 
\begin{equation*}
	\mathbb{E}(X(t))=\frac{K\sum_{i=1}^{K_1}i\lambda_i}{\beta}\left(t+\frac{e^{-\beta t}-1}{\beta}\right).
\end{equation*}
Thus, average occupancy is
\begin{equation*}
	O(t)=\frac{K\sum_{i=1}^{K_1}i\lambda_i}{\beta}\left(1+\frac{e^{-\beta t}-1}{\beta t}\right).
\end{equation*}


\begin{thebibliography}{00}
	
	\bibitem{ref3}
	Bailey, N.T.J., 1964. The Elements of Stochastic Processes with Applications to the Natural Sciences. Wiley, New York.
	
	
	\bibitem{ref1.0}
	Doubleday, W.G., 1973. On linear birth-death processes with multiple births. \textit{Math Biosci.} \textbf{17}, 43-56.
	\bibitem{ref7}
	 Di Crescenzo, A., Martinucci, B. and Meoli, A. 2016. A fractional counting process and its connection
	with the Poisson process. \textit{ALEA Lat. Am. J. Probab. Math. Stat}. \textbf{13}(1), 291-307.
	\bibitem{ref1}
	Feller, W., 1968. An Introduction to Probability Theory and Its Applications, Vol. 1, 3rd ed. Wiley, New
	York. 
	\bibitem{refa}
	Gani, J., McNeil, D.R., 1971. Joint distributions of random variables and their integrals for certain birth-death and diffusion processes. \textit{Advances Appl. Prob.}\textbf{ 3}, 339-352.
	
	\bibitem{ref1.1}
	Kendall, D.G., 1948. On the Generalized "Birth-and-Death" Process. \textit{Ann. Math. Statist.} \textbf{19}(1), 1-15.
	\bibitem{refd}
	Karlin, S., Taylor, H.M., 1975. A first course in stochastic processes. Academic Press.

	\bibitem{refx}
	Moran, P.A.P, (1951). Estimation methods for evolutive processes. \textit{J. R. Statist. Soc.} (B)\textbf{13}, 141-146.
	\bibitem{refy}
	Moran, P.A.P, (1953). Estimation of parameters of a birth and death process. \textit{J. R. Statist. Soc.} (B)\textbf{15}, 241-245.
	\bibitem{refb}
	McNeil, D.R., 1970. Integral functions of birth and death processes and related limiting 
	distributions. \textit{Ann. Math. Stat.} \textbf{41}, 480-485. 
	
	
	 \bibitem{refc}
	 Puri, P.S., 1966. On the homogeneous birth and death process and its integral. \textit{Biometrika}. \textbf{53}, 61-67.
	 \bibitem{refd}
	 Puri, P.S., 1968. Some further results on the birth-and-death process and its integral. 
	 \textit{Proc. Camb. Phil. Sot.} \textbf{67}, 141-154.
	 
\end{thebibliography}
\end{document}